\newtheorem{theorem}{Theorem}[section]
\newtheorem{corollary}[theorem]{Corollary}
\newtheorem{lemma}[theorem]{Lemma}
\newtheorem{proposition}[theorem]{Proposition}
\theoremstyle{remark}
\newtheorem{definition}[theorem]{Definition}
\theoremstyle{remark}
\newtheorem{example}[theorem]{Example}
\theoremstyle{remark}
\newtheorem{remark}[theorem]{Remark}
\newcommand{\cd}{\circledast}
\newcommand{\h}{\mathcal{H}}
\newcommand{\hc}{\mathcal{H}_{\mathbb{C}}}
\newcommand{\fh}{\mathcal F^Q(\mathcal H)}
\newcommand{\wick}[1]{{:}\,#1\,{:}}
\newcommand{\Wick}[1]{{:}\,#1\,{:}_W}
\newcommand{\rom}[1]{{\rm #1}}
\newcommand{\la}{\langle}
\newcommand{\ra}{\rangle}
\begin{document}

\makeatletter\@addtoreset{equation}{section}

\begin{center}{\Large \bf
  Noncommutative L\'evy processes for generalized (particularly anyon) statistics
}\end{center}

{\large Marek Bo\.zejko}\\
Instytut Matematyczny, Uniwersytet Wroc{\l}awski, Pl.\ Grunwaldzki 2/4, 50-384 Wroc{\l}aw, Poland\\
e-mail: \texttt{bozejko@math.uni.wroc.pl}\vspace{2mm}

{\large Eugene Lytvynov}\\ Department of Mathematics,
Swansea University, Singleton Park, Swansea SA2 8PP, U.K.\\
e-mail: \texttt{e.lytvynov@swansea.ac.uk}\vspace{2mm}

{\large Janusz Wysocza\'nski}\\
Instytut Matematyczny, Uniwersytet Wroc{\l}awski, Pl.\ Grunwaldzki 2/4, 50-384 Wroc{\l}aw, Poland\\
e-mail: \texttt{jwys@math.uni.wroc.pl}\vspace{2mm}

{\small

\begin{center}
{\bf Abstract}
\end{center}
\noindent
Let $T=\mathbb R^d$. Let a function $Q:T^2\to\mathbb C$ satisfy $Q(s,t)=\overline{Q(t,s)}$ and $|Q(s,t)|=1$. A generalized statistics is described by creation operators $\partial_t^\dag$ and annihilation operators $\partial_t$, $t\in T$, which satisfy the $Q$-commutation relations:
$\partial_s\partial^\dag_t = Q(s, t)\partial^\dag_t\partial_s+\delta(s, t)$, 
$\partial_s\partial_t = Q(t,s)\partial_t\partial_s$, $\partial^\dag_s\partial^\dag_t = Q(t, s)\partial^\dag_t\partial^\dag_s$. From the point of view of physics, the most important case of a generalized statistics is the anyon statistics, for which  $Q(s,t)$ is equal to $q$ if $s<t$, and to $\bar q$ if  $s>t$. Here $q\in\mathbb C$, $|q|=1$.  We start the paper with a detailed discussion of a $Q$-Fock space  and operators $(\partial_t^\dag,\partial_t)_{t\in T}$ in it, which satisfy the $Q$-commutation relations. 
Next,  we consider a noncommutative stochastic process (white noise)
$\omega(t)=\partial_t^\dag+\partial_t+\lambda\partial_t^\dag\partial_t$, $t\in T$. Here $\lambda\in\mathbb R$ is a fixed parameter. The case $\lambda=0$ corresponds to a $Q$-analog of Brownian motion, while  $\lambda\ne0$ corresponds to a (centered) $Q$-Poisson process. We study $Q$-Hermite ($Q$-Charlier respectively) polynomials of  infinitely many noncommutatative variables $(\omega(t))_{t\in T}$. The main aim of the paper is to explain the notion of independence for a generalized statistics, and to derive corresponding L\'evy processes. To this end, we recursively define $Q$-cumulants of a field $(\xi(t))_{t\in T}$. This allows us to define a $Q$-L\'evy process as a field $(\xi(t))_{t\in T}$ whose values at different points of $T$ are $Q$-independent and which possesses a stationarity of increments (in a certain sense).  We present an explicit construction of a $Q$-L\'evy process, and derive a Nualart--Schoutens-type chaotic decomposition for such a  process.  

 } \vspace{2mm}

\section{Introduction} A first rigorous interpolation between  canonical commutation relations (CCR) and canonical anticommutation relations (CAR) was constructed  in 1991 by Bo\.zejko and Speicher \cite{BS}.  Given a Hilbert space $\mathcal H$, they constructed, for each  $q\in(-1,1)$, a deformation of the full Fock space over $\mathcal H$, denoted by $\mathcal F^q(\mathcal H)$.
For each $h\in\mathcal H$, one naturally defines a (bounded) creation operator, $a^+(h)$, in $\mathcal F^q(\mathcal H)$. The corresponding annihilation operator, $a^-(h)$, is the adjoint of $a^+(h)$.  These operators satisfy the $q$-commutation relations:
\begin{equation}\label{dtrs54t}a^-(g)a^+(h)-qa^+(h)a^-(g)=(g,h)_{\mathcal H},\quad g,h\in\mathcal H.\end{equation}
The limiting cases, $q=1$ and $q=-1$, correspond to the bose and fermi statistics, respectively. It should be stressed that, for $q\ne\pm1$, the $q$-modification of the (anti)symmetrization operator  is a strictly positive operator. Therefore, unlike in the classical bose and fermi cases, there are no commutation relations between the creation operators. A noncommutative analog of Brownian motion (Gaussian process) is the family of operators, $(a^+(h)+a^-(h))_{h\in\mathcal H}\,$, in $\mathcal F^q(\mathcal H)$. A study of  this noncommutative stochastic process was initiated in 
 \cite{BKS,BS,BS_1996}, for further results and generalizations of a noncommutative Brownian motion, see 
 e.g.\   \cite{ABBL,B1,B3,BY,KW}. 

After  \cite{BS},  a series of papers \cite{Biane,BS_1994,JSW,Krolak,Krolak2,L-P,S_1993} appeared, which studied   {\em discrete} generalizations of the 
$q$-commutation relations. In the most general form, such commutation relations look as follows. Let $T$ be a discrete set, and let $\mathcal H$ be the complex space $\ell_2(T)$. Fix a bounded linear operator $\Psi$ acting on $\mathcal H\otimes\mathcal H$ which satisfies the following conditions: $\Psi$ is self-adjoint; the norm of $\Psi$ is $\le1$;  $\Psi$ satisfies the braid relation, see \cite{BS_1994} for details.  Let $(e_i)_{i\ge1}$ be the standard orthonormal basis of $\mathcal H=\ell_2(T)$. Define numbers $q^{ik}_{jl}$ through $\Psi e_j\otimes e_l=\sum_{i,k}q^{ik}_{jl} e_i\otimes e_k$. Then, by \cite{BS_1994}, one can construct a unique Fock representation of the commutation relations
\begin{equation}\label{iftru8rf} a_i^-a_j^+-\sum_{k,l}q_{jl}^{ik}a_k^+a_l^-=\delta_{ij},\quad i,j\in T,\end{equation}
where $(a_i^+)^*=a_i^-$. It should be noted that, in majority of the the above cited papers, main attention is drawn to the case where the norm of the operator $\Psi$ is strictly less than~1.

Another generalization of  the CCR and CAR was proposed in 1995 by Ligouri and Mintchev \cite{LM2,LM}. They fixed a {\it continuous} underlying space $T=\mathbb R^d$ and considered a function $Q:T^2\to\mathbb C$ satisfying $Q(s,t)=\overline{Q(t,s)}$ and $|Q(s,t)|=1$. Setting $\mathcal H$ to be the complex space $L^2(T)$, one defines a bounded linear operator $\Psi$    acting on $\mathcal H\otimes\mathcal H$ by the formula
\begin{equation}\label{ftyde6e7} \Psi(f\otimes g)(s,t)=Q(s,t)g(s)f(t),\quad f,g\in \mathcal H.\end{equation}
This operator is self-adjoint, its norm is equal to~1, and it satisfies the braid relation. One then defines  corresponding creation and annihilation operators, $a^+(h)$ and $a^-(h)$, for $h\in\mathcal H$. By setting $a^+(h)=\int_{T}dt\, h(t)\partial_t^\dag$ and $a^-(h)=\int_{T}dt\, \overline{h(t)}\partial_t$, one gets (at least  informally) creation and annihilation operators, $\partial_t^\dag$ and $\partial_t$, at point $t\in T$. These operators satisfy the $Q$-commutation relations
\begin{gather}
\partial_s\partial^\dag_t - Q(s, t)\partial^\dag_t\partial_s=\delta(s, t),\notag\\
\partial_s\partial_t - Q(t,s)\partial_t\partial_s=0 ,\quad 
\partial^\dag_s\partial^\dag_t - Q(t, s)\partial^\dag_t\partial^\dag_s=0 .\label{lkifude}
\end{gather}
Compared with \eqref{dtrs54t} and  \eqref{iftru8rf}, formula \eqref{lkifude} contains commutation relations between the creation operators, and hence also between the annihilation operators. This is due to the fact that each $n$-particle subspace of the corresponding $Q$-Fock space, $\mathcal F^Q(\mathcal H)$, consists of $Q$-symmetric functions. In particular, such functions are completely determined by their values on the Weyl chamber, i.e., on  the set where $t_1<t_2<\dots<t_n$. (We discuss below how an ordering can be introduced if the dimension $d$ of the underlying space $T=\mathbb R^d$ is $\ge2$.)

From the point of view of physics, the most important case of a generalized statistics  \eqref{lkifude} is the anyon statistics, see e.g.\ the recent physical review papers \cite{NSSFDS,Stern}. For the anyon statistics, the function $Q$ is given by 
$$Q(s,t)=\begin{cases}q,&\text{if }s<t,\\
\bar q,&\text{if }s>t\end{cases}$$
for a fixed $q\in\mathbb C$ with $|q|=1$. Hence, the commutation relations \eqref{lkifude} become
 \begin{gather}
\partial_s\partial^\dag_t - q\partial^\dag_t\partial_s=\delta(s, t),\notag\\
\partial_s\partial_t - \bar q\partial_t\partial_s =0,\quad 
\partial^\dag_s\partial^\dag_t - \bar q\partial^\dag_t\partial^\dag_s=0 ,\label{gisyacfcvfy}
\end{gather}
 for $s<t$. In 1995, Goldin and Sharp \cite{Goldin_Sharp} arrived at these commutation relations as a ``consequence of the group representations describing anyons, together with the (completely general) interwinning property of the field.'' Goldin and Sharp \cite{Goldin_Sharp} realized the $(q,\bar q)$-commutation relations  \eqref{gisyacfcvfy} through operators acting on the space of functions of finite configurations in $T=\mathbb R^2$ (this, in fact, corresponds to the (classical) symmetric Fock space over $\mathcal H=L^2(T)$). An equivalent realization of these commutation relations through operators acting on a Fock space of $(q,\bar q)$-symmetric functions was done by Goldin and Majid in \cite{GM}. They also showed that, in the case where $q$ is an $N$-th root of 1, the corresponding statistics  satisfies the natural anyonic exclusion principle, which generalizes  Pauli's exclusion principle for fermions. 
  
Sections \ref{hfhjfut} and \ref{utyr65e} of this paper contain a rather detailed discussion on the construction of the representation of the $Q$-commutation relations \eqref{lkifude}, with a special attention to the case of anyons. While many results in these two sections can be found  in \cite{GM, LM}  (and to some extent in \cite{BS_1994}), Sections \ref{hfhjfut} and \ref{utyr65e} also contain some new results, like an explicit formula for the $Q$-symmetrization operator (Proposition~\ref{cftrst}) or a derivation of a neutral operator, $a^0(h):=\int_Tdt\,h(t) \partial_t^\dag\partial_t$, in the $Q$-Fock space $\mathcal F^Q(\mathcal H)$. For the reader's convenience, we tried to make our presentation  essentially self-contained. 
We hope that these two sections might be useful even to those readers who are not particularly interested in our further  results related to noncommutative probability for generalized statistics. 

Having creation, neutral, and annihilation operators at our disposal, we define and study, in Section~\ref{cfydry}, a noncommutative stochastic process (white noise) $\omega(t)=\partial_t^\dag+\partial_t+\lambda \partial_t^\dag\partial_t$, $t\in T$. Here $\lambda\in\mathbb R$ is a fixed parameter. The case $\lambda=0$ corresponds to a $Q$-analog of Brownian motion, while the case $\lambda\ne0$ (in particular, $\lambda=1$) corresponds to a (centered) $Q$-Poisson process (compare with \cite{BKS,BL1,A1}). We identify corresponding $Q$-Hermite ($Q$-Charlier respectively)  polynomials, denoted by
$\wick{\omega(t_1)\dotsm\omega(t_n)}\,$,  of infinitely many noncommutative variables $(\omega(t))_{t\in T}$. 
As $\omega(t)$ is written in terms of the creation and annihilation operators, $\partial_t^\dag$ and $\partial_t$, we discuss a relation between the orthogonal polynomials and a natural Wick (normal) ordering, compare with \cite{BKS,BL1,Krolak}. It appears that these are {\it different} procedures, unless $\lambda=0$ (Gaussian case) and the function $Q$ is 
real-valued, i.e., taking values in $\{-1,1\}$ (a mixed 
bose-fermi statistics). We also represent a monomial as a sum of orthogonal polynomials (Wick rule for a product of fields). This immediately implies a corresponding moment formula. 

The main aim of this paper is to explain the notion of independence for a generalized statistics, and to derive corresponding L\'evy processes. We know from experience both in free probability and in $q$-deformed probability that a natural way to explain 
that certain noncomutative random variables are independent (relative to a given statistics/deformation of commutation relations) is to do this through corresponding deformed cumulants. Here we refer the reader to Speicher \cite{FreeCumulants} for a relation between cumulants and independence in the framework of free probability, and  to Anshelevich  \cite{A2} for a definition and study of
$q$-deformed cumulants ($-1<q<1$). See also Lehner \cite{Lehner1}, \cite{Lehner3} for a quite general discussion of cumulants in noncommutative probability. 
Noncommutative 
L\'evy processes have most actively been studied in the framework  of free probability, see e.g.\ \cite{BNT} and the references therein.  Using $q$-deformed cumulants, Anshelevich \cite{A1} constructed and studied noncommutative 
L\'evy processes for $q$-commutation relations \eqref{dtrs54t}. One should also mention that noncommutative L\'evy processes have actively  been studied on various algebraic structures,  see e.g.\ \cite{Franz} and the references therein.

 So, in Section~\ref{gtuyr6}, using the moment formula for a generalized statistics as a hint, we introduce $Q$-deformed cumulants. Since the function $Q$ is not a constant, unless $Q$ is identically equal to $+1$ or $-1$ (bosons or fermions), we cannot expect  to have a definition of cumulants for general noncommutative random variables. Instead, we  recursively define $Q$-cumulants of a  {\it field\/} $
 \xi=(\xi(t))_{t\in T}$ (an operator-valued distribution on $T$). The $n$-th $Q$-cumulant,  $C_n(\xi(t_1),\dots,\xi(t_n))$, is  a {\it measure} $c_n(dt_1\times\dots\times dt_n)$ on $T^n$. For test functions $f_1,\dots,f_n$ on $T$, the $n$-th $Q$-cumulant of $\la f_1,\xi\ra,\dots\la f_n,\xi\ra$ is then given by $\int_{T^n}f_1(t_1)\dotsm f_n(t_n)c_n(dt_1\times\dots\times dt_n)$. Here, for a test function $f$ on $T$, $\la f,\xi\ra$ is the operator $\int_Tdt\,f(t)\xi(t)$. Note that, in the classical case, $Q\equiv 1$, our definition of cumulants leads to the classical cumulants, see e.g.\ \cite{Shiryaev}. 
 Having constructed $Q$-cumulants, we can easily explain what it means that noncommutative random variables 
$\la f_1,\xi\ra,\dots,\la f_n,\xi\ra$ are $Q$-independent. This is done by a complete analogy with classical probability (as well as with free probability).

 In Section~\ref{kigf7ur7}, we define a $Q$-L\'evy process as a field $(\xi(t))_{t\in T}$ whose values at different points of the underlying space $T$ are independent and which possesses the `stationarity of increments' (in a certain sense).  We then present an explicit construction of  a $Q$-L\'evy process as a field in a $Q$-Fock space over $L^2(T)\otimes L^2(\mathbb R,\nu)$. Here $\nu$ is a probability measure on $\mathbb R$ and $\tilde\nu(dx):=\chi_{\mathbb R\setminus\{0\}}x^{-2}\,dx$ is the $Q$-L\'evy measure of the process. It is interesting to note that, for a set $\Delta\subset T$ such that $\int_\Delta dt=1$, the $n$-th $Q$-cumulant of the random variable $\int_\Delta dt\, \xi(t)$ is equal to the $n$-th moment of the $Q$-L\'evy measure $\tilde\nu$ (for $n\ge3$), a property which one  would indeed  expect from a proper L\'evy process. We also show that a $Q$-L\'evy process possesses a property of pyramidal independence (e.g.\ \cite{BS_1996}),  and that the vacuum vector is cyclic for a $Q$-L\'evy process.

It is a well known fact of classical probability that, among all L\'evy process, only Brownian motion and Poisson process possess  the chaos decomposition property, i.e., any square-integrable functional of such a process  can be represented as a sum of mutually orthogonal multiple stochastic integrals with respect to the (centered) process, see e.g.\ \cite{Meyer}.
For a general L\'evy process,
Nualart and Schoutens \cite{NS} derived an orthogonal decomposition of any   square-integrable functional of the process in multiple stochastic integrals with respect to the orthogonalized power jump processes (see also \cite{L2}). Anshelevich \cite{A1} extended the result of \cite{NS} to the case of a $q$-L\'evy processes, $-1<q<1$. (It should be noted that, for $q\ne0$,  Proposition~9 of \cite{A1} holds, in fact, in a slightly modified form, which later affects Proposition~16 of \cite{A1}.) In \cite{BL1}, within the framework of free probability ($q=0$), a Nualart--Schoutens-type decomposition for free L\'evy process was applied for a derivation of free Meixner processes.
So, in final Section~\ref{kifr78ufsrs}, we 
 derive a counterpart of the Nualart--Schoutens chaotic decomposition for $Q$-L\'evy processes. We hope that the result of this section will, in particular, be useful for a discussion of noncommutative Meixner processes for a generalized statistics, compare with  \cite{BL1,L2}.

Let us  note that most results of this paper admit a generalization to the case where the complex-valued function $Q(s,t)$, identifying the statistics (see \eqref{ftyde6e7}), is Hermitian and satisfies $|Q(s,t)|\le1$, compare with \cite{BS_1994}. Also, some extensions are possible in the case of a $q$-statistics with $q\in\mathbb R$ and $|q|>1$, cf.\ \cite{B3}.

\section{Symmetrization operator}\label{hfhjfut}

Let $T$ be a locally compact Polish space, let $\mathcal B(T)$ be the Borel $\sigma$-algebra on $T$, and let $\mathcal B_0(T)$ denote the family of  all pre-compact sets from  $\mathcal B(T)$. Let $\sigma$ be a Radon non-atomic measure on $(T,\mathcal B(T))$.
Let $D:=\{(t, t)\in T^2\mid  t\in T \}$ be the diagonal in $T^2$. Since the measure $\sigma$ is non-atomic,  $\sigma^{\otimes 2}(D)=0$.
Consider a set $A\in\mathcal B(T^2)$ which is symmetric, i.e.,  if $( s,t)\in A$ then $(
t,s)\in A$, and such that $D\subset A$ and $\sigma^{\otimes 2}(A)=0$. Note that  the set
$T^{(2)}:=T^2\setminus A$ is also symmetric. We fix a measurable function
\[
Q:T^{(2)} \mapsto S^1:=\{z\in \mathbb{C}\mid |z|=1 \}
\] which is Hermitian:
\[
Q(s, t) = \overline{Q(t, s)},\quad (s, t)\in T^{(2)}.
 \] Note that the function $Q$ is defined $\sigma^{\otimes 2}$-almost everywhere on
$T^2$.
\begin{example}[Anyons] Let us assume that, for a set $A\subset T^2$ as above, we
have a  strict order outside of $A$, i.e.,
 for all
$(s, t)\in T^{(2)}$ either $s<t$ or $t<s$. For a fixed $q\in S^1$, we
define  a function $Q$ on $T^{(2)}$ as follows:
\begin{equation}\label{kdsty}
Q(s, t):= \begin{cases}q, &\text{if }s<t,\\\bar q,&\text{if }t<s.\end{cases}
\end{equation} Here typical choices would be $T=\mathbb{R}$ or $T=\mathbb{R}_+$,
with $A=D$ and the natural order. More examples one gets if, in
$T:=\mathbb{R}^d$, one considers the set
\[
A:=\{(s, t)\in T^2: s_1=t_1 \}
\] for $s=(s_1, \dots , s_d),\, t=(t_1, \dots , t_d)\in
\mathbb{R}^d$, and the order is given by
$$
s<t \mbox{ if and only if }  s_1<t_1
 $$ for $(s, t)\in T^{(2)}$.
Strictly speaking, the case of anyon statistics will correspond to  $d=2$. (See e.g.\ \cite{GM,Goldin_Sharp,NSSFDS,Stern} and the references therein.)
\end{example}

\begin{example}  Let $T$ be a locally compact Polish space and choose any metric, denoted by $\texttt{dist}$,  which generates the topology on $T$. Choose
 $A=D$, and for a given $r>0$,  define  a real-valued function $Q$ by
$$
Q(s, t):=\begin{cases}
1,&\text{if }\texttt{dist}(s,t)\ge r,\\
-1,&\text{if }\texttt{dist}(s,t)<r$$
\end{cases}
$$ for $(s, t)\in
T^{(2)}$. This will later correspond to mixed commutation and anti-commutation relations (compare with e.g.\ \cite{Biane,L-P,S_1993}).
\end{example}

Given a Hermitian function $Q$ as above, we define a $Q$-symmetry as follows.
We  consider an operator $\Psi$ which transforms a measurable function $f^{(2)}:T^{(2)}\to\mathbb C$
into
$$(\Psi f^{(2)})(s,t):=Q(s,t)f(t,s),\quad (s,t)\in T^{(2)}.$$
In particular, a function $f^{(2)}$ is $Q$-symmetric if $\Psi(f^{(2)})=f^{(2)}$, so that
$$ f^{(2)}(s,t)=Q(s,t)f^{(2)}(t,s).$$

By analogy with $T^{(2)}$, we define
$$ T^{(n)}:=\big\{(t_1,\dots,t_n)\in T^n\mid \forall 1\le i<j\le n:\  (t_i,t_{j})\not\in A\big\},\quad n\ge2,$$
and clearly $\sigma^{\otimes n}(T\setminus T^{(n)})=0$.
The operator $\Psi$ can be naturally extended to act on measurable functions $f^{(n)}:T^{(n)}\to\mathbb C$. Indeed, for $j\in\mathbb N$ and for $n\ge j+1$, we set
\begin{equation}\label{operators Psi_j}
(\Psi_j f^{(n)})(t_1, \dots , t_n):= Q(t_j, t_{j+1}) f(t_1, \dots ,
t_{j-1},t_{j+1}, t_{j},t_{j+2}, \dots , t_n )
\end{equation}
for  $(t_1\dots,t_n)\in T^{(n)}$
The following proposition follows directly from \eqref{operators Psi_j}.

\begin{proposition}\label{kjdstey}
The operators $\Psi_j$, $j\in\mathbb N$, satisfy the  equations:
\begin{align}
\Psi_j^2 &=\operatorname{id},\notag\\
\Psi_j\Psi_i  & =  \Psi_i\Psi_j , \quad |i-j|\ge 2,\notag \\
\Psi_j\Psi_{j+1}\Psi_j  & =   \Psi_{j+1}\Psi_{j}\Psi_{j+1} , \label{Yang-Baxter}
\end{align}
the latter equality being called the Yang--Baxter equation. Here $\operatorname{id}$ denotes the identity operator.

\end{proposition}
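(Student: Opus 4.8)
The plan is to verify the three identities by direct computation, tracking how each operator $\Psi_j$ acts on a point $(t_1,\dots,t_n)\in T^{(n)}$: it transposes the coordinates in positions $j$ and $j+1$ and multiplies the value of the function by the scalar $Q(t_j,t_{j+1})$. Throughout one uses that $T^{(n)}$ is invariant under coordinate permutations --- this is immediate from the symmetry of $A$ --- so that every point produced by a composition of $\Psi_j$'s again lies in $T^{(n)}$ and all the $Q$-factors that occur are well defined.

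For $\Psi_j^2=\operatorname{id}$: applying $\Psi_j$ twice swaps the coordinates in positions $j,j+1$ and then swaps them back, returning the point to $(t_1,\dots,t_n)$, while the accumulated scalar is $Q(t_j,t_{j+1})\,Q(t_{j+1},t_j)=Q(t_j,t_{j+1})\,\overline{Q(t_j,t_{j+1})}=|Q(t_j,t_{j+1})|^2=1$ by the Hermitian property and $|Q|=1$. For $|i-j|\ge2$: the transpositions of the position pairs $\{i,i+1\}$ and $\{j,j+1\}$ are disjoint, hence commute, and applying either of $\Psi_i,\Psi_j$ leaves untouched the two coordinates whose $Q$-value the other one reads off; since scalars commute, $\Psi_j\Psi_i=\Psi_i\Psi_j$.

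The only computation requiring care is the Yang--Baxter identity \eqref{Yang-Baxter}. I would fix $(t_1,\dots,t_n)\in T^{(n)}$, abbreviate $a=t_j$, $b=t_{j+1}$, $c=t_{j+2}$, and follow both sides. Under $\Psi_j\Psi_{j+1}\Psi_j$ the triple $(a,b,c)$ occupying positions $(j,j{+}1,j{+}2)$ passes through $(b,a,c)\to(b,c,a)\to(c,b,a)$ while the scalar $Q(a,b)\,Q(a,c)\,Q(b,c)$ is accumulated; under $\Psi_{j+1}\Psi_j\Psi_{j+1}$ it passes through $(a,c,b)\to(c,a,b)\to(c,b,a)$ while $Q(b,c)\,Q(a,c)\,Q(a,b)$ is accumulated. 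Both words in the symmetric group act as the same permutation (the transposition of positions $j$ and $j+2$), so the two sides evaluate $f$ at the same argument, and the two scalar prefactors agree because they are the same three complex numbers multiplied in a different order. This proves \eqref{Yang-Baxter}.

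I expect the only real pitfall to be the bookkeeping in the Yang--Baxter step: at each stage the $Q$-factor contributed by the next operator must be read off from the coordinates present at that stage, not from the original ones, and one must keep the order of composition straight. Everything else is routine, so I would present the Yang--Baxter verification in full and merely indicate the (shorter) arguments for the first two relations.
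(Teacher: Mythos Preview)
Your proposal is correct and is exactly the approach the paper takes: the paper simply states that the proposition ``follows directly from \eqref{operators Psi_j},'' and your direct verification of each relation from the definition is precisely that argument spelled out. Your bookkeeping in the Yang--Baxter step is accurate, including the crucial point that the $Q$-factor at each stage is read from the current (permuted) coordinates.
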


In what follows we will use the notations:
\[
\h := L^2(T, \sigma), \quad \hc := L^2(T\mapsto
\mathbb{C}, \sigma)
\] for the Hilbert space of real-valued, respectively complex-valued, square  integrable
functions on $T$.
Thus, for each $n\in\mathbb N$, $\hc^{\otimes n}=L^2(T^n\mapsto
\mathbb{C}, \sigma^{\otimes n})$.
For each $j=1,\dots,n-1$, $\Psi_j$ determines a unitary operator in $\hc^{\otimes n}$.
Consider the group $S_n$ of all permutations of $1,\dots,n$. With each transposition $\pi_j:=(j, j+1)\in S_n$, we associate the operator $\Psi_j$ in $\hc^{\otimes n}$. By Proposition~\ref{kjdstey}, this mapping can be multiplicatively extended to a unitary representation of $S_n$ in $\hc^{\otimes n}$, see e.g.\ \cite{BS_1994,CM}.
More explicitly, represent each permutation $\pi\in S_n$ as an arbitrary product of transpositions,
\begin{equation}\label{representation of permutation}
\pi = \pi_{j_1}\dotsm \pi_{j_k},
\end{equation}
and set
\begin{equation}\label{vtsh} \Psi_{\pi}:=\Psi_{j_1}\dotsm \Psi_{j_k}.\end{equation}
Then, the definition of the unitary operator $\Psi_\pi$ does not depend on the representation of $\pi$ as in \eqref{representation of permutation}, and for any $\pi,\rho\in S_n$, $\Psi_\pi\Psi_\rho=\Psi_{\pi\rho}$.
This allows us to define  a $Q$-symmetrization
operator $P_n$   by
\begin{equation}\label{Q-symmetrization}
P_n:= \frac{1}{n!}\sum_{\pi\in S_n}\Psi_{\pi}.
\end{equation}

\begin{proposition}\label{jvfytdy}
For each $n\in\mathbb N$, the operator $P_n$ is an orthogonal projection in $\hc^{\otimes n}$, i.e.,
$P_n^*=P_n=P_n^2$. Furthermore, for  $1\le k\le n-1$, we have
\begin{equation} \label{gfy7rdy} P_n=P_n(P_k\otimes P_{n-k}).\end{equation}
\end{proposition}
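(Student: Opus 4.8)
The plan is to establish the two claimed properties — that $P_n$ is an orthogonal projection, and the factorization \eqref{gfy7rdy} — by reducing everything to the single invariance identity $\Psi_\rho P_n=P_n\Psi_\rho=P_n$, valid for every $\rho\in S_n$.

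First I would record that each $\Psi_j$ is self-adjoint: it is already unitary, and $\Psi_j^2=\operatorname{id}$ from Proposition~\ref{kjdstey} forces $\Psi_j^*=\Psi_j^{-1}=\Psi_j$. Since $\pi\mapsto\Psi_\pi$ is a unitary representation of $S_n$, this gives $\Psi_\pi^*=\Psi_\pi^{-1}=\Psi_{\pi^{-1}}$, whence $P_n^*=\frac1{n!}\sum_{\pi\in S_n}\Psi_{\pi^{-1}}=P_n$ because inversion permutes $S_n$. For idempotency I would use that, for every $\rho\in S_n$, reindexing the sum yields $\Psi_\rho P_n=\frac1{n!}\sum_{\pi}\Psi_{\rho\pi}=P_n$, and likewise $P_n\Psi_\rho=P_n$; feeding this into $P_n^2=\frac1{n!}\sum_{\pi}P_n\Psi_\pi$ gives $P_n^2=P_n$. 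Hence $P_n$ is an orthogonal projection.

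For the factorization, fix $1\le k\le n-1$. The key preliminary step is to recognize $P_k\otimes\operatorname{id}_{\hc^{\otimes(n-k)}}$, viewed as an operator on $\hc^{\otimes n}=\hc^{\otimes k}\otimes\hc^{\otimes(n-k)}$, as the partial average $\frac1{k!}\sum_{\pi\in S_k}\Psi_\pi$, where $S_k$ sits inside $S_n$ as the permutations fixing $k+1,\dots,n$. Indeed, by \eqref{operators Psi_j}, for $1\le j\le k-1$ the operator $\Psi_j$ acts only on the $j$-th and $(j+1)$-st tensor factors and coincides with the $\hc^{\otimes k}$-level $\Psi_j$ tensored with the identity; since $S_k$ is generated by $\pi_1,\dots,\pi_{k-1}$ and $\Psi_\pi$ does not depend on the chosen word, the identification follows. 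Symmetrically, $\operatorname{id}_{\hc^{\otimes k}}\otimes P_{n-k}=\frac1{(n-k)!}\sum_\rho\Psi_\rho$, the sum running over the copy of $S_{n-k}$ permuting $k+1,\dots,n$. Using $P_n\Psi_\rho=P_n$ from the previous step, both $P_n(P_k\otimes\operatorname{id})$ and $P_n(\operatorname{id}\otimes P_{n-k})$ collapse to $P_n$; since $(P_k\otimes\operatorname{id})(\operatorname{id}\otimes P_{n-k})=P_k\otimes P_{n-k}$, we conclude $P_n(P_k\otimes P_{n-k})=P_n(P_k\otimes\operatorname{id})(\operatorname{id}\otimes P_{n-k})=P_n(\operatorname{id}\otimes P_{n-k})=P_n$.

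The routine parts here amount to bookkeeping with finite sums over $S_n$. The one point deserving care — and the step I expect to be the main obstacle — is the identification of $P_k\otimes\operatorname{id}$ (and $\operatorname{id}\otimes P_{n-k}$) with the partial average over the Young subgroup $S_k\times S_{n-k}\subset S_n$: this requires matching the abstractly defined representation $\pi\mapsto\Psi_\pi$ with the tensor-product decomposition of $\hc^{\otimes n}$, i.e.\ checking that a word for $\pi\in S_k$ may be taken among $\pi_1,\dots,\pi_{k-1}$ only, and invoking the well-definedness of $\Psi_\pi$ (independence of the representing word) guaranteed by Proposition~\ref{kjdstey}.
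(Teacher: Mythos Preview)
Your proof is correct and follows essentially the same approach as the paper: both arguments reduce to the reindexing identity $\Psi_\rho P_n=P_n\Psi_\rho=P_n$ coming from the group representation. The paper is in fact terser---it dispatches \eqref{gfy7rdy} with the single phrase ``analogously one can also prove''---so your explicit identification of $P_k\otimes\operatorname{id}$ and $\operatorname{id}\otimes P_{n-k}$ with averages over the Young subgroup $S_k\times S_{n-k}\subset S_n$ is a welcome elaboration of what the paper leaves implicit.
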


\begin{proof} For each $\pi\in S_n$, we clearly have $\Psi_\pi^*=\Psi_\pi^{-1}=\Psi_{\pi^{-1}}$.
Hence, by \eqref{Q-symmetrization}, $P_n^*=P_n$. Next,
$$
P_n^2=\frac1{(n!)^2}\sum_{\rho\in S_n}\sum_{\pi\in S_n}\Psi_{\rho}\Psi_\pi
=\frac1{(n!)^2}\sum_{\rho\in S_n}\sum_{\pi\in S_n}\Psi_{\rho\pi}=\frac1{n!}\sum_{\pi\in S_n}\Psi_\pi =P_n.
$$
Analogously one can also prove formula \eqref{gfy7rdy}.
\end{proof}

Thus, similarly to the symmetric and antisymmetric tensor products, one can naturally define a {\it $Q$-symmetric tensor product}, which will be denoted by $\circledast$. More precisely, we denote
$\mathcal H_{\mathbb C}^{\circledast n}:=P_n\hc^{\otimes n}$, and for any $m,n\in\mathbb N$ and any
$f^{(m)}\in \hc^{\circledast m}$, $g^{(n)}\in \hc^{\circledast n}$, $f^{(m)}\circledast g^{(n)}:=P_{m+n}(f^{(m)}\circledast g^{(n)})$. In particular, for any $f_1,\dots,f_n\in \hc$,
$f_1\circledast\dots \circledast f_n=P_n(f_1\otimes\dots\otimes f_n)$. Note that, by formula \eqref{gfy7rdy},
this tensor product is associative.

We will say that a measurable function $f^{(n)}:T^{(n)}\to\mathbb C$ ($n\ge 2$) is $Q$-symmetric if $\Psi_jf^{(n)}=f^{(n)}$ for all $j=1,\dots,n-1$.
The following trivial proposition shows that, as expected, the Hilbert space $\hc^{\circledast n}$ consists of all $Q$-symmetric functions from $\hc^{\otimes n}$.

\begin{proposition}\label{lhgufr7} For each $n\ge2$, we have
$$\hc^{\circledast n}=\big\{f^{(n)}\in \hc^{\otimes n}\mid \forall j=1,\dots,n-1: \Psi_j f^{(n)}=f^{(n)}\big\}.$$
\end{proposition}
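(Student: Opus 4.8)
The plan is to verify the two inclusions separately, relying only on the fact already established above (via Proposition~\ref{kjdstey}) that $\pi\mapsto\Psi_\pi$ is a well-defined unitary representation of $S_n$ on $\hc^{\otimes n}$; in particular $\Psi_\pi$ does not depend on the chosen decomposition \eqref{representation of permutation} of $\pi$ into transpositions, and $\Psi_\pi\Psi_\rho=\Psi_{\pi\rho}$.

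For the inclusion ``$\subseteq$'', I would first observe that $\Psi_j P_n=P_n$ for every $j=1,\dots,n-1$. Indeed, with $\pi_j=(j,j+1)$ one has $\Psi_j P_n=\frac1{n!}\sum_{\pi\in S_n}\Psi_{\pi_j}\Psi_\pi=\frac1{n!}\sum_{\pi\in S_n}\Psi_{\pi_j\pi}=P_n$, since left multiplication $\pi\mapsto\pi_j\pi$ is a bijection of $S_n$. Consequently, if $f^{(n)}\in\hc^{\cd n}$, say $f^{(n)}=P_n g$ with $g\in\hc^{\otimes n}$, then $\Psi_j f^{(n)}=\Psi_j P_n g=P_n g=f^{(n)}$ for each $j$, so $f^{(n)}$ lies in the right-hand side.

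For the reverse inclusion, suppose $f^{(n)}\in\hc^{\otimes n}$ satisfies $\Psi_j f^{(n)}=f^{(n)}$ for all $j=1,\dots,n-1$. Representing an arbitrary $\pi\in S_n$ as $\pi=\pi_{j_1}\dotsm\pi_{j_k}$ and using $\Psi_\pi=\Psi_{j_1}\dotsm\Psi_{j_k}$, one gets $\Psi_\pi f^{(n)}=f^{(n)}$ by applying the operators $\Psi_{j_i}$ successively. Averaging over $S_n$ then gives $P_n f^{(n)}=\frac1{n!}\sum_{\pi\in S_n}\Psi_\pi f^{(n)}=f^{(n)}$, whence $f^{(n)}=P_n f^{(n)}\in P_n\hc^{\otimes n}=\hc^{\cd n}$.

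There is no genuine obstacle: the statement is merely the identification of the range of the orthogonal projection $P_n$ (Proposition~\ref{jvfytdy}) with the common fixed-point subspace of the unitaries $\Psi_1,\dots,\Psi_{n-1}$, and both inclusions follow at once from the representation property of $\pi\mapsto\Psi_\pi$. The one point one must not overlook is precisely that well-definedness of $\Psi_\pi$ independently of its presentation as a product of transpositions — but this has already been guaranteed by the involution, commutation, and Yang–Baxter relations of Proposition~\ref{kjdstey}.
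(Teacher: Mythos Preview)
Your proof is correct and takes essentially the same approach as the paper: both directions are proved exactly as you do, by showing $\Psi_j P_n=P_n$ via the bijection $\pi\mapsto\pi_j\pi$ for the inclusion ``$\subseteq$'', and by propagating $\Psi_j f^{(n)}=f^{(n)}$ to all $\Psi_\pi$ and then averaging for the reverse inclusion.
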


\begin{proof} Assume that $f^{(n)}\in \hc^{\otimes n}$ satisfies $\Psi_j f^{(n)}=f^{(n)}$ for all $j=1,\dots,n-1$. Then, by \eqref{vtsh}, $\Psi_{\pi}f^{(n)}= f^{(n)}$ for all $\pi\in S_n$, and so $P_n f^{(n)}=f^{(n)}$. Therefore, $f^{(n)}\in \hc^{\circledast n}$. On the other hand, assume that $f^{(n)}\in \hc^{\circledast n}$. Then, for each $j=1,\dots,n-1$,
$$
\Psi_j f^{(n)}=\Psi_j P_n f^{(n)}=\Psi_j\,\frac1{n!}\sum_{\pi\in S_n}\Psi_\pi f^{(n)}=\frac1{n!}\sum_{\pi\in S_n}
\Psi_{\pi_j\pi}f^{(n)}=P_nf^{(n)}=f^{(n)}.
$$
\end{proof}

\begin{remark}
By Proposition~\ref{lhgufr7}, any function from $\hc^{\circledast n}$ is completely determined by its values on the set $\{(t_1,\dots,t_n)\in T^{(n)}\mid t_1<t_2<\dots<t_n\}$. 
\end{remark}

We also have the following inductive formula for the projections $P_n$.

\begin{proposition} \label{ydwu54e} For each $n\in \mathbb N$
\begin{equation}\label{jhfydfujuf} P_{n+1}=\frac1{n+1}(\mathbf 1+\Psi_1+\Psi_2\Psi_1+\dots+\Psi_n\Psi_{n-1}\dotsm \Psi_1)(\mathbf1\otimes P_n),\end{equation}
or equivalently, for any  $h\in \hc$ and $f^{(n)} \in \hc^{\cd n}$ we
have
\begin{multline}
(h\circledast  f^{(n)})(t_1, \dots , t_n, t_{n+1}) =\frac{1}{n+1}\bigg[ h(t_1)f^{(n)}(t_2, \dots , t_{n+1})\\
\text{} +
\sum_{k=2}^{n+1} Q(t_1, t_k) Q(t_2, t_k) \dotsm Q(t_{k-1}, t_{k})
h(t_k)f^{(n)}(t_1, \dots, \check t_k, \dots , t_{n+1})\bigg],\label{dstyufut}
\end{multline}
where $\check t_k$ denotes the absence of $t_k$.
\end{proposition}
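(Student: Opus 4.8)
The plan is to prove the inductive formula \eqref{jhfydfujuf} by decomposing the symmetric group $S_{n+1}$ according to the image of the last position, and then to translate the operator identity into the pointwise formula \eqref{dstyufut} by unwinding the definition of the $\Psi_{\pi}$'s on elementary tensors.

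First I would set up the combinatorial decomposition. Every permutation $\pi\in S_{n+1}$ can be written uniquely as $\pi=\gamma_k\circ(\mathrm{id}\times\rho)$, where $\rho\in S_n$ acts on $\{2,\dots,n+1\}$ (fixing $1$), and $\gamma_k$ is the cyclic permutation sending position $1$ to position $k$ and shifting $2,\dots,k$ down by one, for $k=1,\dots,n+1$ (with $\gamma_1=\mathrm{id}$). The key observation is that $\gamma_k$ has a canonical reduced expression in adjacent transpositions, namely $\gamma_k=\pi_{k-1}\pi_{k-2}\dotsm\pi_1$, so that $\Psi_{\gamma_k}=\Psi_{k-1}\Psi_{k-2}\dotsm\Psi_1$ (with $\Psi_{\gamma_1}=\mathbf1$). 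Using the multiplicativity $\Psi_\pi\Psi_\rho=\Psi_{\pi\rho}$ established after \eqref{vtsh}, and the fact that permutations fixing $1$ give exactly $\mathbf1\otimes\Psi_\rho$ on $\hc^{\otimes(n+1)}$, we get
\begin{equation}\notag
P_{n+1}=\frac1{(n+1)!}\sum_{k=1}^{n+1}\sum_{\rho\in S_n}\Psi_{k-1}\dotsm\Psi_1\,(\mathbf1\otimes\Psi_\rho)
=\frac1{n+1}\Bigl(\sum_{k=1}^{n+1}\Psi_{k-1}\dotsm\Psi_1\Bigr)(\mathbf1\otimes P_n),
\end{equation}
which is exactly \eqref{jhfydfujuf} after pulling the sum $\frac1{n!}\sum_\rho\mathbf1\otimes\Psi_\rho=\mathbf1\otimes P_n$ through (legitimate since $\mathbf1\otimes P_n$ commutes with nothing in particular, but it sits on the right of every term, so no commuting is needed). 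I should double-check the uniqueness of the factorization $\pi=\gamma_k\circ(\mathrm{id}\times\rho)$ — this is the standard coset decomposition of $S_{n+1}$ over the stabilizer of $1$ — and verify the reduced word for $\gamma_k$ by a short induction on $k$.

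Next I would derive the pointwise formula \eqref{dstyufut}. Apply the operator identity to $h\otimes f^{(n)}$ with $f^{(n)}\in\hc^{\cd n}$; since $P_n f^{(n)}=f^{(n)}$, the factor $\mathbf1\otimes P_n$ acts trivially and we are left with
\[
h\cd f^{(n)}=\frac1{n+1}\sum_{k=1}^{n+1}\Psi_{k-1}\Psi_{k-2}\dotsm\Psi_1\,(h\otimes f^{(n)}).
\]
The $k=1$ term is $h\otimes f^{(n)}$, giving $h(t_1)f^{(n)}(t_2,\dots,t_{n+1})$. For general $k$, I would compute $\Psi_{k-1}\dotsm\Psi_1(h\otimes f^{(n)})$ by iterating \eqref{operators Psi_j}: applying $\Psi_1$ moves the $h$-slot from position $1$ to position $2$ and produces a factor $Q(t_1,t_2)$ with the arguments of the function evaluated in the swapped order; applying $\Psi_2$ then moves it to position $3$ with an extra factor $Q(t_1,t_3)$ — here one has to be careful that after the first swap the variable now sitting in slot $2$ is the original $t_1$, so the factor really is $Q(t_1,t_3)$, and so on. After $k-1$ applications the $h$-slot has migrated to position $k$, the accumulated phase is $Q(t_1,t_k)Q(t_2,t_k)\dotsm Q(t_{k-1},t_k)$, and the argument of $f^{(n)}$ is $(t_1,\dots,\check t_k,\dots,t_{n+1})$. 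Summing over $k$ yields \eqref{dstyufut}.

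The main obstacle is the bookkeeping in this last step: one must track precisely which physical variable occupies which tensor slot after each transposition, since $\Psi_j$ in \eqref{operators Psi_j} evaluates $Q$ at the \emph{current} arguments in slots $j,j+1$, not at the original labels. The cleanest way to handle this rigorously is to prove by induction on $m$ the formula
\[
\bigl(\Psi_m\Psi_{m-1}\dotsm\Psi_1(h\otimes g^{(n)})\bigr)(t_1,\dots,t_{n+1})
=\Bigl(\prod_{i=1}^{m}Q(t_i,t_{m+1})\Bigr)h(t_{m+1})\,g^{(n)}(t_1,\dots,t_m,t_{m+2},\dots,t_{n+1}),
\]
the inductive step being a single application of \eqref{operators Psi_j}, and then read off the case $m=k-1$. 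The equivalence of \eqref{jhfydfujuf} and \eqref{dstyufut} then follows because elementary tensors $h\otimes f^{(n)}$ with $f^{(n)}\in\hc^{\cd n}$ span a dense subspace of $\hc\otimes\hc^{\cd n}$, and both sides are bounded.
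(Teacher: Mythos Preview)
Your proof is correct and follows essentially the same approach as the paper: a coset decomposition of $S_{n+1}$ over the stabilizer of $1$, followed by the inductive computation of $\Psi_{k-1}\dotsm\Psi_1(h\otimes f^{(n)})$ (which is exactly the paper's formula \eqref{fd5uei}). The only minor difference is that you decompose by the value $\pi(1)$ and obtain \eqref{jhfydfujuf} directly, whereas the paper decomposes by $\pi^{-1}(1)$, first obtaining $P_{n+1}=\frac1{n+1}\sum_k(\mathbf1\otimes P_n)\Psi_1\dotsm\Psi_{k-1}$ and then passing to adjoints; your route is marginally more direct.
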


\begin{proof} Such a statement is well known in the theory of permutation groups and is, in fact, based on the geometry of the Cayley graph, see e.g.\ \cite{Knuth}.

For each permutation $\sigma\in S_n$, denote by $\mathbf 1\otimes\sigma$ the element of $S_{n+1}$
for which $1$ is a fixed point and which permutes the $n$ numbers  $2,3,\dots,n+1$ according to $\sigma$. Note that, for each $k\ge2$, the permutation $\pi_1\pi_2\dotsm\pi_{k-1}$
puts $k$ on the first place, leaving the order of the other elements unchanged.
 Hence,
\begin{align}
P_{n+1}&=\frac1{(n+1)!}\sum_{\pi\in S_{n+1}}\Psi_\pi=\frac1{(n+1)!}\sum_{k=1}^{n+1}\sum_{\substack{\pi\in S_{n+1}\\ \pi(k)=1}}\Psi_\pi\notag\\
&=\frac1{(n+1)!}\sum_{k=1}^{n+1}\sum_{\sigma\in S_n}\Psi_{(\mathbf 1\otimes\sigma)\pi_1\pi_2\dotsm\pi_{k-1}}\notag\\
&=\frac1{n+1}\sum_{k=1}^{n+1}\frac1{n!}\sum_{\sigma\in S_n}(\mathbf1\otimes \Psi_\sigma)\Psi_1\Psi_2\dotsm\Psi_{k-1}\notag\\
&=\frac1{n+1}\sum_{k=1}^{n+1}(\mathbf 1\otimes P_n )\Psi_1\Psi_2\dotsm\Psi_{k-1}.\label{gdtrs5}
\end{align}
From here formula \eqref{jhfydfujuf} follows by taking the adjoint operators.
Formula \eqref{dstyufut} follows directly from \eqref{jhfydfujuf} if we mention that, for each $k=1,\dots,n$,
\begin{multline}\big(\Psi_k\Psi_{k-1}\dotsm\Psi_1 (h\otimes f^{(n)})\big)(t_1,\dots,t_{n+1})\\
=Q(t_1,t_{k+1})Q(t_2,t_{k+1})\dotsm Q(t_k,t_{k+1})h(t_{k+1})f(t_1,\dots\check t_{k+1},\dots,t_{n+1}), \label{fd5uei} \end{multline}
which can be easily checked by induction.
\end{proof}

In the definition \eqref{Q-symmetrization}  of the $Q$-symmetrization, $P_n$, was given through a rather abstract representation of $\pi$ as in \eqref{representation of permutation}. We will now derive an explicit formula for the action of  $P_n$.

\begin{proposition}[$Q$-symmetrization formula] For each $f^{(n)}\in \hc^{\otimes n}$, $n\ge2$, we have
\begin{equation}\label{jhcdthr}
(P_nf^{(n)})(t_1,\dots,t_n)=\frac 1{n!}\sum_{\pi\in S_n}Q_{\pi}(t_1,\dots,t_n) f^{(n)}(t_{\pi^{-1}(1)},\dots,t_{\pi^{-1}(n)}),
\end{equation}
where for $\pi\in S_n$
\begin{equation}\label{Q_pi_wzor}
Q_{\pi}(t_1, \ldots , t_n):=\!\!\! \prod_{\substack{ 1\le i < j \le n\\[1mm]
\pi(i) > \pi(j)}}
\!\!\! Q(t_i, t_j).
\end{equation}
In particular, for any  $f_1,\dots,f_n\in \hc$, we have:
\begin{equation}\label{kfdte6e}
(f_1\circledast\dots \circledast f_n)(t_1, \dots , t_n)
=\frac{1}{n!}\sum_{\pi\in S_n} Q_{\pi}(t_1, \dots ,
t_n)(f_{\pi(1)}\otimes\dots \otimes f_{\pi(n)})(t_1, \dots , t_n).
\end{equation}
\label{cftrst}\end{proposition}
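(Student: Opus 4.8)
The plan is to show that the operator $\Psi_\pi$, defined abstractly in \eqref{vtsh} via a reduced word for $\pi$, has the explicit form $\Psi_\pi=U_{\pi^{-1}}$, where $U_\pi$ is the operator given by
\[
(U_\pi f^{(n)})(t_1,\dots,t_n):=Q_\pi(t_1,\dots,t_n)\,f^{(n)}(t_{\pi^{-1}(1)},\dots,t_{\pi^{-1}(n)}),\qquad f^{(n)}\in\hc^{\otimes n}.
\]
Granting this, \eqref{jhcdthr} is immediate from \eqref{Q-symmetrization}, since summation over $\pi$ coincides with summation over $\pi^{-1}$: $P_n=\frac1{n!}\sum_{\pi\in S_n}\Psi_\pi=\frac1{n!}\sum_{\pi\in S_n}U_{\pi^{-1}}=\frac1{n!}\sum_{\pi\in S_n}U_\pi$. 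Formula \eqref{kfdte6e} is then the special case $f^{(n)}=f_1\otimes\dots\otimes f_n$, after re-indexing $f_1(t_{\pi^{-1}(1)})\dotsm f_n(t_{\pi^{-1}(n)})=f_{\pi(1)}(t_1)\dotsm f_{\pi(n)}(t_n)$ via $j\mapsto\pi^{-1}(j)$. As a base case, for a simple transposition $\pi_j=(j,j+1)$ the only inversion is the pair $\{j,j+1\}$, so $Q_{\pi_j}(t_1,\dots,t_n)=Q(t_j,t_{j+1})$; comparing with \eqref{operators Psi_j} and recalling $\pi_j^{-1}=\pi_j$, this gives $U_{\pi_j}=\Psi_j=\Psi_{\pi_j}$.

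The heart of the argument is the cocycle identity
\[
Q_{\rho\pi}(t_1,\dots,t_n)=Q_\pi(t_1,\dots,t_n)\,Q_\rho(t_{\pi^{-1}(1)},\dots,t_{\pi^{-1}(n)}),\qquad \pi,\rho\in S_n,\ (t_1,\dots,t_n)\in T^{(n)}.
\]
I would prove it by fixing an unordered pair $\{k,l\}$ with $k<l$ and comparing the exponent of $Q(t_k,t_l)$ on the two sides, with the convention $Q(t_l,t_k)=Q(t_k,t_l)^{-1}$ — this is precisely where the hypotheses that $Q$ is Hermitian and $S^1$-valued enter. Writing $p=\mathbf 1\{\pi(k)>\pi(l)\}$ and $r=\mathbf 1\{(\rho\pi)(k)>(\rho\pi)(l)\}$: the left side contributes $Q(t_k,t_l)^{\,r}$; on the right, $Q_\pi$ contributes $Q(t_k,t_l)^{\,p}$, while the unique factor of $Q_\rho(t_{\pi^{-1}(1)},\dots,t_{\pi^{-1}(n)})$ attached to the pair $\{\pi(k),\pi(l)\}$ contributes $Q(t_k,t_l)^{+1}$ if $(p,r)=(0,1)$, $Q(t_k,t_l)^{-1}$ if $(p,r)=(1,0)$, and $1$ otherwise. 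Checking the four cases $(p,r)\in\{0,1\}^2$ shows that $p$ plus this contribution equals $r$ each time, so the exponents agree, and since all factors lie in $S^1$ the identity follows.

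Finally, a direct computation from the definition of $U_\pi$ turns the cocycle identity into the operator identity $U_\pi U_\rho=U_{\rho\pi}$, so $U$ is a (unitary) anti-representation of $S_n$. Hence $\pi\mapsto U_{\pi^{-1}}$ is a genuine representation of $S_n$, and by the base case it agrees with the representation $\pi\mapsto\Psi_\pi$ on the generators $\pi_1,\dots,\pi_{n-1}$; since these generate $S_n$, the two representations coincide, giving $\Psi_\pi=U_{\pi^{-1}}$ for all $\pi$, and thus \eqref{jhcdthr} and \eqref{kfdte6e}. I expect the main obstacle to be the bookkeeping in the cocycle identity: keeping the left/right ordering ($\rho\pi$ versus $\pi\rho$, i.e.\ anti-representation versus representation) straight, and making transparent that it is the Hermitian property of $Q$ which cancels the $Q(t_k,t_l)^{-1}$ terms. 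Everything else is routine; an alternative to the last step is induction on the number of inversions of $\pi$, peeling off one transposition at a time, but it uses the cocycle identity in essentially the same form.
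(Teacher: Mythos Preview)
Your proof is correct. Both your argument and the paper's hinge on the same combinatorial fact---a multiplicativity/cocycle property of the functions $Q_\pi$---but they organize it differently. The paper proves directly, by induction on the reduced length of $\pi$, that $(\Psi_\pi f^{(n)})(t_1,\dots,t_n)=Q_{\pi^{-1}}(t_1,\dots,t_n)\,f^{(n)}(t_{\pi(1)},\dots,t_{\pi(n)})$: it writes $\pi=\zeta\pi_j$ with $|\zeta|=|\pi|-1$ and verifies the one-step identity $Q_{\pi^{-1}}=Q_{\zeta^{-1}}\cdot Q(t_{\zeta(j)},t_{\zeta(j+1)})$ by a five-case analysis on pairs $u<v$. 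The case in which multiplying by a transposition \emph{removes} an inversion is ruled out precisely because the word is reduced, so the Hermitian property of $Q$ is never invoked explicitly in that step. Your route instead establishes the full cocycle identity $Q_{\rho\pi}(t)=Q_\pi(t)\,Q_\rho(t_{\pi^{-1}(\cdot)})$ for arbitrary $\pi,\rho$, which forces you to handle that same ``inversion removed'' case and is exactly where $Q(t_l,t_k)=Q(t_k,t_l)^{-1}$ enters; in return you obtain the clean anti-representation statement $U_\pi U_\rho=U_{\rho\pi}$ and need no reduced-word bookkeeping at all. The two approaches are essentially equivalent---you yourself note the inductive alternative at the end---but your packaging via the general cocycle identity is a little more structural, while the paper's is slightly more hands-on.
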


\begin{proof}  It suffices to prove that, for each $\pi\in S_n$,
\begin{equation}\label{sths}
(\Psi_{\pi}f^{(n)})(t_1,\dots,t_n)=Q_{\pi^{-1}}(t_1,\dots,t_n)f^{(n)}(t_{\pi(1)},\dots,t_{\pi(n)}).
\end{equation}
A permutation $\pi\in S_n$ can be represented (not in a unique way, in general) as a reduced product of minimal number of transpositions, i.e., in the form \eqref{representation of permutation} in which $k$ is minimal possible. The number $k$ is called the length of $\pi$, and we will denote it by $|\pi|$. It is well known that $|\pi|$ is equal to the number of inversions of $\pi$, i.e., the number of $1\le i< j\le n$ such that $\pi(i)>\pi(j)$, see e.g., \cite{Knuth}

It follows from \eqref{operators Psi_j} that for an inversion  $\pi_j=(j,j+1)=\pi_j^{-1}$, formula \eqref{sths} trivially holds.
Hence, we can proceed by induction on the length
of $\pi=\pi_{j_1}\dotsm \pi_{j_k}$. If we
define $\zeta:=\pi_{j_1}\dotsm \pi_{j_{k-1}}$, so that
$\pi=\zeta\pi_{j_k}$, then $\zeta$ has length $k-1$, and using the
simplified notation $j:=j_k$,
 $\eta:=\zeta^{-1}$
and the induction assumption, we have:
\begin{align*}
&(\Psi_{\pi}f^{(n)})(t_1,\dots,t_n)=(\Psi_{\zeta}[\Psi_{\pi_{j}}
f^{(n)}])(t_1,\dots,t_n)\\
&\quad =
 Q_{\eta}(t_1, \dots , t_n)
(\Psi_{j}f^{(n)})(t_{\zeta(1)}, \dots , t_{\zeta(n)})\\
&\quad =Q_{\eta}(t_1, \dots , t_n)Q(t_{\zeta(j)},t_{\zeta(j+1)})
f^{(n)}(t_{\zeta(1)}, \dots ,
t_{\zeta(j-1)}, t_{\zeta(j+1)}, t_{\zeta(j)}, t_{\zeta(j+2)},
\dots , t_{\zeta(n)})\\
&\quad =Q_{\eta}(t_1, \dots , t_n)Q(t_{\zeta(j)},t_{\zeta(j+1)}) f^{(n)}(t_{\pi(1)}\dots, t_{\pi(n)}).
\end{align*}
Thus, we only need to prove that
\begin{equation}\label{Q_rho}
Q_{\rho}(t_1, \dots , t_n) = Q_{\eta}(t_1, \dots , t_n)Q(t_{\zeta(j)},t_{\zeta(j+1)}) .
\end{equation}
where $\rho:=\pi^{-1}=\pi_j\eta$.  Let  $1\le u < v \le n$. We have to consider the following cases.

\begin{itemize}
\item
If $\eta(u) \not\in \{j, j+1 \}$ and $\eta(v)\notin \{j, j+1 \}$, then
both $\eta(u)$ and $\eta(v)$ are fixed points for the transposition
$\pi_j$. Consequently,  $\rho(u)>\rho(v)$ if and only if
$\eta(u)>\eta(v)$. Thus,  the term $Q(t_u, t_v)$
appears in $Q_{\rho}(t_1, \dots , t_n)$ if and only if it appears in
$Q_{\eta}(t_1, \dots , t_n)$.

\item
If $\eta(u) \in \{j, j+1 \}$ and $\eta(v)\notin \{j, j+1 \}$, then
$\eta(v)$ is fixed by $\pi_j$, and, since $\pi_j(\eta(u))\in \{j,
j+1 \}$, the realtion between $\eta(u)$ and $\eta(v)$ is the same as
between $\rho(u)$ and $\rho(v)$.

\item
The case $\eta(u) \notin \{j, j+1 \}$ and $\eta(v)\in \{j, j+1 \}$
is analogous to the previous one.

\item
If $\eta(u)=j+1$ and $\eta(v)=j$, then $\rho(u)=j$ and
$\rho(v)=j+1$. Hence, $\eta(u)>\eta(v)$
and $\rho(u)<\rho(v)$, so that $\eta$  changes the order of the
pair $\{u < v \}$ while $\rho$ does not. Therefore,
$\eta$ has more inversions  than $\rho$: $|\eta|>|\rho|$. But this contradicts the assumption that $\pi$ (and equivalently $\rho$)
is in the reduced form, so that, in particular, $|\rho|=|\eta|+1$. Thus, this case is impossible.

\item The remaining case is $\eta(u)=j$ and $\eta(v)=j+1$, or,
equivalently $\zeta(j)=u$ $\zeta(j+1)=v$. Then
$\rho(u)=j+1$ and $\rho(v)=j$. Hence, $\eta(u)<\eta(v)$ while $\rho(u)
> \rho(v)$. Thus, the term $Q(t_u, t_v) = Q(t_{\zeta(j)}, t_{\zeta(j+1)})$
appears in $Q_{\rho}(t_1, \dots , t_n)$ but not in $Q_{\eta}(t_1,
\dots , t_n)$.
\end{itemize}
Thus, \eqref{Q_rho} is proven.
\end{proof}

We finish this section with the remarkable anyon exclusion principle, which was shown by Goldin and Majid \cite{GM}.

\begin{proposition}[\cite{GM}]\label{lfuig}
Assume that the function $Q$ is given by \eqref{kdsty} in which $q\ne1$ is an $N$-th root of one, i.e., $q^N=1$, for some $N\ge2$. Then, for each $f\in\hc$, $f^{\circledast N}=0$.
\end{proposition}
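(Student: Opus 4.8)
The plan is to reduce the statement to the explicit $Q$-symmetrization formula \eqref{kfdte6e} together with the classical generating function for inversions. By the Remark following Proposition~\ref{lhgufr7}, a function in $\hc^{\circledast N}$ is completely determined by its values on the Weyl chamber $W:=\{(t_1,\dots,t_N)\in T^{(N)}\mid t_1<t_2<\dots<t_N\}$ (here one uses that the strict order on $T^{(2)}$ is transitive, so that almost every point of $T^N$ is a relabelling of a point of $W$). Hence it suffices to show that $f^{\circledast N}$ vanishes on $W$. Applying \eqref{kfdte6e} with $f_1=\dots=f_N=f$ and noting that $(f\otimes\dots\otimes f)(t_1,\dots,t_N)=f(t_1)\dotsm f(t_N)$ does not depend on the permutation, we get
\[
(f^{\circledast N})(t_1,\dots,t_N)=\frac1{N!}\Big(\sum_{\pi\in S_N}Q_\pi(t_1,\dots,t_N)\Big)f(t_1)\dotsm f(t_N).
\]

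On $W$ we have $t_i<t_j$, hence $Q(t_i,t_j)=q$, for every $i<j$, so by \eqref{Q_pi_wzor} the coefficient $Q_\pi(t_1,\dots,t_N)=q^{|\pi|}$, where $|\pi|$ is the number of inversions (equivalently, the length) of $\pi$. Thus on $W$ the value of $f^{\circledast N}$ equals $\frac1{N!}\big(\sum_{\pi\in S_N}q^{|\pi|}\big)f(t_1)\dotsm f(t_N)$, and everything comes down to the identity
\[
\sum_{\pi\in S_N}q^{|\pi|}=\prod_{k=1}^N\big(1+q+q^2+\dots+q^{k-1}\big),
\]
the $q$-factorial $[N]_q!$; see e.g.\ \cite{Knuth}. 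If a self-contained argument is preferred, this follows by induction on $N$ directly from the decomposition in \eqref{gdtrs5}: it writes each $\pi\in S_N$ uniquely as $(\mathbf1\otimes\sigma)\pi_1\pi_2\dotsm\pi_{k-1}$ with $\sigma\in S_{N-1}$ and $1\le k\le N$, and these factorizations are length-additive (the $\pi_1\dotsm\pi_{k-1}$ are precisely the minimal-length coset representatives of $S_{N-1}\backslash S_N$, of lengths $0,1,\dots,N-1$), whence $\sum_{\pi\in S_N}q^{|\pi|}=\big(\sum_{\sigma\in S_{N-1}}q^{|\sigma|}\big)(1+q+\dots+q^{N-1})$. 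Since $q^N=1$ and $q\ne1$, the $k=N$ factor equals $(q^N-1)/(q-1)=0$, so the whole product vanishes; therefore $f^{\circledast N}=0$ on $W$, and hence $f^{\circledast N}=0$ in $\hc^{\circledast N}$.

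The computation itself is routine once \eqref{kfdte6e} is in hand; the only genuine input is the $q$-factorial identity, and the only mild care needed is the passage from ``vanishes on $W$'' to ``vanishes a.e.''. One can in fact bypass the reduction to $W$: for a.e.\ $(t_1,\dots,t_N)$ pick $\rho\in S_N$ ordering the coordinates increasingly, and use the cocycle relation $Q_{(\rho\tau)^{-1}}(t)=Q_{\rho^{-1}}(t)\,Q_{\tau^{-1}}(t_{\rho(1)},\dots,t_{\rho(N)})$ extracted from the proof of Proposition~\ref{cftrst} to factor $\sum_{\pi\in S_N}Q_\pi(t_1,\dots,t_N)=Q_{\rho^{-1}}(t)\sum_{\mu\in S_N}q^{|\mu|}=0$ in exactly the same way.
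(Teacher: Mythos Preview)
Your proof is correct and follows essentially the same line as the paper's: both restrict to the Weyl chamber $t_1<\dots<t_N$ and identify $f^{\circledast N}(t_1,\dots,t_N)=\dfrac{[N]_q!}{N!}\,f(t_1)\dotsm f(t_N)$, which vanishes since $[N]_q=0$. The only cosmetic difference is that the paper derives this scalar by induction via the recurrence \eqref{dstyufut}, whereas you use the explicit formula \eqref{kfdte6e} together with the inversion generating function $\sum_{\pi\in S_N}q^{|\pi|}=[N]_q!$; the two computations are equivalent.
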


\begin{proof}
Since the proof of this statement is rather short, we present it here. For each $n\in\mathbb N$, define the $q$-number $$[n]_q:=1+q+q^2+\dots+q^{n-1}=(1-q^n)/(1-q),$$
and the $q$-factorial $[n]_q!:=[n]_q[n-1]_q\dotsm 1$.
 We state that, for any $(t_1,\dots,t_n)\in T^{(n)}$ with $t_1<t_2,\dots<t_n$, we have
 \begin{equation}\label{iue6w5}  f^{\circledast n}(t_1,\dots,t_n)=\frac{[n]_q!}{n!}\, f^{\otimes n}(t_1,\dots,t_n).\end{equation}
 This can be easily checked by induction in $n$ through formula \eqref{dstyufut}. (Note that, when applying formula \eqref{dstyufut}, we still have,   $t_1<t_2<\dots<t_{k-1}<t_{k+1}<\dots<t_n$ for each $k=1,\dots,n$.) By substituting $n=N$ into \eqref{iue6w5} and noting that $[N]_q=0$, we get the statement.
\end{proof}

\begin{remark}\label{vfydyr}
Note that, in the fermi case, for any $f,g\in \hc$, we have $f\wedge g\wedge f=0$ where $\wedge$ denotes antisymmetric tensor product. However, an analogous statement fails in the general case for anyons.  For example, for $q^3=1$, the function $f\circledast g\circledast f^{\circledast2}$ is generally speaking not equal to zero, even though  $g\circledast f^{\circledast3}=0$.
\end{remark}

\section{$Q$-Fock space and fundamental operators in it}\label{utyr65e}
We define a {\it $Q$-Fock space\/} by
$$\mathcal F^Q(\mathcal H):=\bigoplus_{n=0}^\infty \hc^{\cd n}n!\, .$$
Thus, $\mathcal F^Q(\mathcal H)$ is the Hilbert space which consists of all sequences $F=(f^{(0)}, f^{(1)},f^{(2)},\dots)$ with $f^{(n)}\in  \hc^{\cd n}$ ($\hc^{\cd 0}:=\mathbb C$) satisfying
$$\|F\|^2_{\mathcal F^Q(\mathcal H)}:=\sum_{n=0}^\infty\|f^{(n)}\|^2_{\hc^{\cd n}}n!<\infty.$$
(The inner product in $\mathcal F^Q(\mathcal H)$ is  induced by the norm in this space.)
The vector $\Omega:=(1,0,0,\dots)\in \mathcal F^Q(\mathcal H)$ is called the {\it vacuum}.
We  denote by $\mathcal F_{\mathrm{fin}}^Q(\mathcal H)$ the subset of $\mathcal F^Q(\mathcal H)$ consisting of all finite sequences $$F=(f^{(0)},f^{(1)},\dots,f^{(n)},0,0,\dots)$$ in which $f^{(i)}\in \hc^{\cd i}$ for $i=0,1,\dots,n$, $n\in\mathbb N$. This space can be endowed with the topology of the topological direct sum of the $\hc^{\cd n}$ spaces. Thus, the convergence in $\mathcal F_{\mathrm{fin}}^Q(\mathcal H)$ means uniform finiteness of non-zero components and coordinate-wise convergence in $\hc^{\cd n}$.

For each $h\in\mathcal H_{\mathbb C}$, we define a {\it creation operator\/} $a^+(h)$ and an {\it annihilation operator\/} $a^-(h)$ as linear operators acting on $\mathcal F_{\mathrm{fin}}^Q(\mathcal H)$ given by
\begin{equation}\label{Q-creation/annihilation}
a^{+}(h)f^{(n)} := h \cd f^{(n)},\quad f^{(n)}\in \hc^{\cd n},\quad a^{-}(h):= (a^{+}(h))^*\restriction _{\mathcal F_{\mathrm{fin}}^Q(\mathcal H)}.
\end{equation}
Clearly, $a^+(h)$ acts continuously on $\mathcal F_{\mathrm{fin}}^Q(\mathcal H)$,  hence so does $a^-(h)$.

Note that the action of the creation operator is explicitly given through the right hand side of formula \eqref{dstyufut}. The following proposition gives an explicit formula for the action of the annihilation operator.

\begin{proposition}\label{kds5u6r}
For $h\in \hc$ and $f^{(n)}\in \hc^{\cd n}$, we
have:
 \[
(a^-(h)f^{(n)})(t_1, \dots , t_{n-1}) =
 n\int_{T}\overline{h(s)}\,f^{(n)}(s, t_1, \dots ,
t_{n-1})\,\sigma(ds).
\]
\end{proposition}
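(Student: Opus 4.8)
The plan is to compute the adjoint of the creation operator directly from the definition $a^+(h)f^{(n)} = h\circledast f^{(n)}$ together with the inductive formula \eqref{dstyufut} for the $Q$-symmetric tensor product. Concretely, I would fix $f^{(n)}\in\hc^{\cd n}$ and $g^{(n-1)}\in\hc^{\cd(n-1)}$ and evaluate $(a^+(h)g^{(n-1)}, f^{(n)})_{\hc^{\cd n}}\cdot n!$, which equals $(h\circledast g^{(n-1)}, f^{(n)})_{\hc^{\cd n}}\, n!$. The goal is to show this equals $(g^{(n-1)}, a^-(h)f^{(n)})_{\hc^{\cd(n-1)}}\,(n-1)!$ with $a^-(h)f^{(n)}$ given by the claimed formula, so that the identification of $a^-(h)$ on $\hc^{\cd(n-1)}$ follows since $g^{(n-1)}$ is arbitrary (bearing in mind the weights $n!$ and $(n-1)!$ built into the norm of $\fh$).

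The key computational step is to expand $(h\circledast g^{(n-1)}, f^{(n)})$ using \eqref{dstyufut}. That formula writes $h\circledast g^{(n-1)}$ as $\frac1n$ times a sum of $n$ terms, the $k$-th term being $\big(\prod_{i<k}Q(t_i,t_k)\big)h(t_k)\,g^{(n-1)}(t_1,\dots,\check t_k,\dots,t_n)$. When I pair this against $f^{(n)}$ and integrate over $T^n$, I would use the $Q$-symmetry of $f^{(n)}$ to move the variable $t_k$ to the first slot: by Proposition~\ref{lhgufr7}, applying the appropriate $\Psi_j$'s shows that $\big(\prod_{i<k}Q(t_i,t_k)\big)f^{(n)}(t_1,\dots,t_n)$, after relabelling, equals $f^{(n)}$ evaluated with $t_k$ pulled to the front — this is exactly the content of \eqref{fd5uei} read for $f^{(n)}$ itself rather than for a product. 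Hence each of the $n$ terms in the sum contributes the same integral, namely $\int_{T^n}\overline{f^{(n)}(t_k, t_1,\dots,\check t_k,\dots,t_n)}\,h(t_k)\,g^{(n-1)}(t_1,\dots,\check t_k,\dots,t_n)\,d\sigma^{\otimes n}$, which after renaming the integration variable $t_k\mapsto s$ and the remaining ones becomes $\int_T \overline{h(s)}$ — wait, more precisely $\int_{T^{n-1}}\overline{g^{(n-1)}(t_1,\dots,t_{n-1})}\big(n\int_T \overline{h(s)}\,f^{(n)}(s,t_1,\dots,t_{n-1})\,\sigma(ds)\big)$ after complex conjugation is handled. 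The factor $\frac1n$ from \eqref{dstyufut} cancels against the $n$ equal terms, leaving the stated $n$ in front after accounting for the $n!/(n-1)! = n$ ratio of Fock-space weights.

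I expect the main obstacle to be bookkeeping: correctly tracking the $Q$-factors $\prod_{i<k}Q(t_i,t_k)$ and verifying that they are precisely what is needed to make all $n$ summands equal, using that $f^{(n)}$ (and not just $g^{(n-1)}$) is $Q$-symmetric. The cleanest way to handle this is to invoke \eqref{fd5uei} of Proposition~\ref{ydwu54e}, which already establishes the identity $\big(\Psi_{k-1}\dotsm\Psi_1(h\otimes f)\big)(t_1,\dots,t_{n+1}) = \big(\prod_{i\le k}Q(t_i,t_{k+1})\big)h(t_{k+1})f(t_1,\dots,\check t_{k+1},\dots,t_{n+1})$; applied with the roles adjusted, this gives the $Q$-symmetry relation for $f^{(n)}$ that converts each term of \eqref{dstyufut} into the same integral. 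One should also note that since $\sigma$ is non-atomic, all integrals are over $T^{(n)}$ up to a null set, so the $Q$-factors are well defined $\sigma^{\otimes n}$-a.e., and there is no issue integrating them. With these remarks, the computation collapses to a one-line change of variables, and continuity of $a^-(h)$ on $\mathcal F^Q_{\mathrm{fin}}(\mathcal H)$ — already noted after \eqref{Q-creation/annihilation} — justifies that this formula determines $a^-(h)$ completely.
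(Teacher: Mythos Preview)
Your approach is correct and will go through: expanding $(h\circledast g^{(n-1)},f^{(n)})$ via \eqref{dstyufut}, then using the $Q$-symmetry of $f^{(n)}$ (in the form $f^{(n)}(t_1,\dots,t_n)=\prod_{i<k}Q(t_i,t_k)\,f^{(n)}(t_k,t_1,\dots,\check t_k,\dots,t_n)$, which is \eqref{sths} for the cycle pulling $k$ to the front) makes the $Q$-factors cancel against their conjugates since $|Q|=1$, so all $n$ summands coincide and the $n!/(n-1)!$ weight ratio yields the factor $n$.

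The paper, however, takes a different and more conceptual route. It passes to the weighted full Fock space $\mathcal F(\mathcal H)=\bigoplus \hc^{\otimes n}n!$, where the free creation $a^+_{\mathrm{free}}(h)f^{(n)}=h\otimes f^{(n)}$ has as adjoint the free annihilation $a^-_{\mathrm{free}}(h)$ given by exactly the integral formula in the statement. Writing $P$ for the orthogonal projection onto $\mathcal F^Q(\mathcal H)$, one has $a^+(h)=P\,a^+_{\mathrm{free}}(h)\,P$, hence $a^-(h)=P\,a^-_{\mathrm{free}}(h)\,P$. For $F\in\mathcal F^Q_{\mathrm{fin}}(\mathcal H)$ the right-hand $P$ is redundant, and the key observation is that $a^-_{\mathrm{free}}(h)F$ is already $Q$-symmetric (integrating out the first variable leaves the $\Psi_j$-invariance in the remaining ones intact, cf.\ Proposition~\ref{lhgufr7}), so the left-hand $P$ is redundant too. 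This avoids all explicit $Q$-factor bookkeeping. Your direct computation buys self-containedness (no auxiliary full Fock space), while the paper's argument is cleaner and makes transparent why the answer coincides with the free-Fock formula.
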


\begin{proof} Let $ \mathcal F(\mathcal H):= \bigoplus_{n=0}^{\infty} \hc^{\otimes n}n!$ be the weighted full Fock space over $\mathcal H$ with weights $n!$, and let $\mathcal F_{\mathrm{fin}}(\mathcal H)$ be the subspace of finite sequences in $\mathcal F(\mathcal H)$. Free creation and annihilation operators are defined on $ \mathcal F_{\mathrm{fin}}(\mathcal H)$ by the formulas
$$a_{\mathrm{free}}^+(h)f^{(n)}:=h\otimes f^{(n)},\quad (a_{\mathrm{free}}^-(h)f^{(n)})(t_1,\dots,t_{n-1}):=\int_T\overline{h(s)}f^{(n)}(s,t_1,\dots,t_{n-1})\,\sigma(ds)$$
 for $h\in\hc$ and $f^{(n)}\in\hc^{\otimes n}$,
 and clearly
 $a_{\mathrm{free}}^{-}(h)= (a_{\mathrm{free}}^{+}(h))^*\restriction _{\mathcal F_{\mathrm{fin}}(\mathcal H)}$.
Let $P: \mathcal F(\mathcal H)\to \mathcal F^Q(\mathcal H)$ be the orthogonal projection of $\mathcal F(\mathcal H)$ onto $\mathcal F^Q(\mathcal H)$. We note that
$ P \mathcal F_{\mathrm{fin}}(\mathcal H)=\mathcal F^Q_{\mathrm{fin}}(\mathcal H).$
We have
$$ a^+(h)= Pa_{\mathrm{free}}^+(h)P,$$
hence
 $$ a^-(h)= Pa_{\mathrm{free}}^-(h)P.$$
 Thus, for each $F\in \mathcal F^Q_{\mathrm{fin}}(\mathcal H)$
 $$ a^-(h)F=P\,a_{\mathrm{free}}^-(h)PF=P\,a_{\mathrm{free}}^-(h)F=a_{\mathrm{free}}^-(h)F,$$
 where we used that $a_{\mathrm{free}}^-(h)F\in \mathcal F^Q_{\mathrm{fin}}(\mathcal H)$, see Proposition~\ref{lhgufr7}.
\end{proof}

The following proposition gives a formula for the action of the annihilation operator on a $Q$-symmetric tensor product of vectors from $\hc$.

\begin{proposition}\label{ctst}
For any $h,f_1,f_2,\dots,f_{n}\in\hc$, we have
\begin{multline*}
a^-(h)f_1\cd f_2\cd\dotsm\cd f_{n} \\
=\int_T\overline{h(s)}\,\bigg[\sum_{k=1}^n f_k(s) \big(Q(s,\cdot)f_1\big)\cd\dotsm\cd \big(Q(s,\cdot)f_{k-1}\big)\cd  f_{k+1}\cd \dotsm \cd f_n\bigg]\sigma(ds).
\end{multline*}
\end{proposition}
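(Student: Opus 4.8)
The plan is to reduce the claim to the explicit $Q$-symmetrization formula \eqref{kfdte6e} of Proposition~\ref{cftrst} together with the formula for the annihilation operator from Proposition~\ref{kds5u6r}. Writing $F:=f_1\cd\dotsm\cd f_n$, Proposition~\ref{kds5u6r} gives
$$(a^-(h)F)(t_1,\dots,t_{n-1})=n\int_T\overline{h(s)}\,F(s,t_1,\dots,t_{n-1})\,\sigma(ds),$$
so it suffices to show that, for $\sigma$-a.e.\ $s$,
$$F(s,t_1,\dots,t_{n-1})=\frac1n\sum_{k=1}^n f_k(s)\,\bigl((Q(s,\cdot)f_1)\cd\dotsm\cd(Q(s,\cdot)f_{k-1})\cd f_{k+1}\cd\dotsm\cd f_n\bigr)(t_1,\dots,t_{n-1}).$$

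To prove this I would apply \eqref{kfdte6e} with the arguments written as $(\tau_1,\dots,\tau_n):=(s,t_1,\dots,t_{n-1})$ and split the resulting sum over $S_n$ according to the value $k:=\pi(1)$, i.e.\ according to which factor $f_k$ is evaluated at $s=\tau_1$. For fixed $k$ one factors $f_k(s)$ out of every term. In $Q_\pi(\tau_1,\dots,\tau_n)$ I would separate the inversions of the form $(1,j)$ from the rest: the pair $(1,j)$ is an inversion of $\pi$ exactly when $\pi(j)<k$, and the corresponding factor $Q(\tau_1,\tau_j)=Q(s,\tau_j)$ can then be absorbed into $f_{\pi(j)}$, replacing it by $Q(s,\cdot)f_{\pi(j)}$. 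Thus precisely the functions with index in $\{1,\dots,k-1\}$ acquire the twist $Q(s,\cdot)$ and no others do, while the remaining product of $Q$-factors consists exactly of the inversions $(i,j)$ of $\pi$ with $i\ge2$.

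Next I would re-index: identifying the positions $\{2,\dots,n\}$ with $\{1,\dots,n-1\}$, and the value set $\{1,\dots,n\}\setminus\{k\}$, listed in increasing order, with $\{1,\dots,n-1\}$, the permutations $\pi\in S_n$ with $\pi(1)=k$ correspond bijectively to all $\zeta\in S_{n-1}$, the inversion count is preserved (the inversions $(i,j)$ of $\pi$ with $i\ge2$ correspond to the inversions of $\zeta$), and hence the surviving product of $Q$-factors equals $Q_\zeta(\tau_2,\dots,\tau_n)$ in the sense of \eqref{Q_pi_wzor}. Applying \eqref{kfdte6e} in the reverse direction to the $n-1$ functions $(Q(s,\cdot)f_1),\dots,(Q(s,\cdot)f_{k-1}),f_{k+1},\dots,f_n$ then identifies $\sum_{\pi\colon\pi(1)=k}$ with $(n-1)!$ times the corresponding $Q$-symmetric tensor product evaluated at $(\tau_2,\dots,\tau_n)=(t_1,\dots,t_{n-1})$. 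Summing over $k$ and using $n\cdot\frac{(n-1)!}{n!}=1$ yields the asserted formula.

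The main obstacle is the combinatorial bookkeeping in the two middle steps: one must verify carefully that the inversions of $\pi$ involving position $1$ are exactly the indices $j$ with $\pi(j)<\pi(1)$ (so that the twist $Q(s,\cdot)$ lands on $f_1,\dots,f_{k-1}$ and on no other factor), and that the re-indexing $\pi\mapsto\zeta$ genuinely preserves the inversion structure, so that $Q_\pi$ restricted to the last $n-1$ positions becomes $Q_\zeta$. These are elementary facts about inversions, but they are the point where the non-constancy of $Q$ and the non-commutativity of $\cd$ make the order of the factors delicate. (An alternative is an induction on $n$ built on the recursion \eqref{dstyufut} applied to $f_1\cd(f_2\cd\dotsm\cd f_n)$ together with Proposition~\ref{kds5u6r}; the combinatorics involved is essentially the same, so I would favour the direct computation above.)
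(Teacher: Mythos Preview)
Your argument is correct. Both you and the paper split the sum over $S_n$ according to which factor $f_k$ lands in the first slot, but you do this by invoking the explicit $Q$-symmetrization formula of Proposition~\ref{cftrst} and tracking inversions by hand, whereas the paper instead quotes the operator identity \eqref{gdtrs5} (the coset form $P_n=\frac1n(\mathbf 1\otimes P_{n-1})\sum_{k}\Psi_1\dotsm\Psi_{k-1}$ from the proof of Proposition~\ref{ydwu54e}), computes $\Psi_1\dotsm\Psi_{k-1}(f_1\otimes\dotsm\otimes f_n)$ directly, and then applies $\mathbf 1\otimes P_{n-1}$; Proposition~\ref{kds5u6r} finishes the job in both cases. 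The paper's route avoids the inversion bookkeeping you flag as the delicate point, since the operator identity already encodes the bijection $\pi\mapsto\zeta$ and the separation of the $Q(s,\cdot)$ twists; your route, on the other hand, shows transparently why exactly the first $k-1$ factors get twisted. The content is the same coset decomposition of $S_n$ over $S_{n-1}$.
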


\begin{proof} By \eqref{gdtrs5}
\begin{multline} f_1\cd f_2\cd\dotsm\cd f_n=P_n(f_1\otimes f_2\otimes\dots\otimes f_n)\\
=\frac1{n}\,
(\mathbf 1\otimes P_{n-1})\bigg[ \mathbf 1+\sum_{k=2}^n\Psi_1\Psi_2\dotsm\Psi_{k-1}\bigg](f_1\otimes f_2\otimes\dots\otimes f_n).\label{hdr6teu5}\end{multline}
Analogously to \eqref{fd5uei}, we conclude that
\begin{multline*} \Psi_1\Psi_2\dotsm\Psi_{k-1}(f_1\otimes f_2\otimes\dots\otimes f_n)(s,t_1,\dots,t_{n-1})\\
=Q(s,t_1)Q(s,t_2)\dotsm Q(s,t_{k-1}) (f_k\otimes f_1\otimes f_2\otimes\dots \otimes f_{k-1}\otimes f_{k+1}\otimes\dots\otimes f_n)(s,t_1,\dots,t_{n-1})\\
=f_k(s) \big(Q(s,t_1)f_1(t_1)\big)\big(Q(s,t_2)f_2(t_2)\big)\dotsm \big(Q(s,t_{k-1})f_{k-1}(t_{k-1})\big)f_{k+1}(t_{k})\dotsm f_n(t_{n-1}).
 \end{multline*}
Hence
\begin{multline}
(\mathbf 1\otimes P_{n-1})\Psi_1\Psi_2\dotsm\Psi_{k-1}(f_1\otimes f_2\otimes\dots\otimes f_n)(s,t_1,\dots,t_{n-1})\\
=f_k(s) \big(\big(Q(s,\cdot)f_1\big)\cd\dotsm\cd \big(Q(s,\cdot)f_{k-1}\big)\cd  f_{k+1}\cd \dotsm \cd f_n\big)(t_1,\dots,t_{n-1}).\label{fdr6te5ut}
\end{multline}
By \eqref{hdr6teu5}, \eqref{fdr6te5ut} and Proposition~\ref{kds5u6r}, the statement follows.
\end{proof}

It is well known that, in the fermion case, the creation and annihilation operators are bounded in the antisymmetric Fock space, and the norm of each $a^+(h)$ and $a^-(h)$ is $\|h\|_{\hc}$. So the natural question arises as to whether this property remains for other generalized statistics. The following proposition was proven by Liguori and Mintchev \cite{LM}.

\begin{proposition}[\cite{LM}]\label{jhfdytd}
For each $h\in \hc$, the operator $a^{+}(h)$ \rom(and so $a^-(h)$\rom) is
bounded on $\mathcal F^Q(\mathcal H)$ with norm $\le\|h \|_{\hc}$ if and
only if the kernel $Q$ is negative semidefinite, i.e.,
\begin{equation}\label{negative definite}
\int_{T^2}Q(s, t)f(s)\overline{f(t)} \,\sigma(ds)\,\sigma(dt)\leqslant 0
\end{equation}
for any $f\in B_0(T\mapsto \mathbb C)$, a complex-valued bounded measurable function $f$ on $T$
with compact support.
\end{proposition}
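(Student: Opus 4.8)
The plan is to cut $a^+(h)$ into its restrictions to the $n$-particle subspaces and to show that boundedness of the whole family is governed by a single fixed two-body quantity, which will turn out to be exactly the form in \eqref{negative definite}. Since $a^+(h)$ maps $\hc^{\cd n}$ into the orthogonal summand $\hc^{\cd(n+1)}$ of $\fh$, we have $\|a^+(h)\|_{\fh}=\sup_{n\ge0}\|a^+(h)\restriction\hc^{\cd n}\|$, and applying $a^+(h)$ to $\Omega$ shows this supremum is always $\ge\|h\|_{\hc}$. The key identity I would establish is
\begin{equation*}
\|a^+(h)f^{(n)}\|_{\fh}^{2}=\|h\|_{\hc}^{2}\,\|f^{(n)}\|_{\fh}^{2}+n\cdot n!\,\la h\otimes f^{(n)},\,\Psi_1(h\otimes f^{(n)})\ra_{\hc^{\otimes(n+1)}},\qquad f^{(n)}\in\hc^{\cd n},\ n\ge1 .
\end{equation*}
To prove it I would write $\|a^+(h)f^{(n)}\|_{\fh}^{2}=(n+1)!\,\la h\otimes f^{(n)},P_{n+1}(h\otimes f^{(n)})\ra$ (using that $P_{n+1}$ is an orthogonal projection, Proposition~\ref{jvfytdy}), insert the inductive formula \eqref{jhfydfujuf}, and note that $(\mathbf 1\otimes P_n)(h\otimes f^{(n)})=h\otimes f^{(n)}$, so that $(n+1)P_{n+1}(h\otimes f^{(n)})=\sum_{k=0}^{n}\Psi_k\Psi_{k-1}\dotsm\Psi_1(h\otimes f^{(n)})$. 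For $k\ge1$ I factor $\Psi_k\dotsm\Psi_1=(\Psi_k\dotsm\Psi_2)\Psi_1$; the operator $\Psi_k\dotsm\Psi_2$ involves only the last $n$ tensor legs, and since $f^{(n)}$ is $Q$-symmetric (Proposition~\ref{lhgufr7}) its adjoint sends $h\otimes f^{(n)}$ back to $h\otimes f^{(n)}$, whence $\la h\otimes f^{(n)},\Psi_k\dotsm\Psi_1(h\otimes f^{(n)})\ra=\la h\otimes f^{(n)},\Psi_1(h\otimes f^{(n)})\ra$ for every such $k$; summing the $n$ equal off-diagonal terms together with the diagonal term $\|h\|_{\hc}^{2}\|f^{(n)}\|_{\hc^{\otimes n}}^{2}$ gives the identity. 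In particular $\|a^+(h)\|_{\fh}\le\|h\|_{\hc}$ for all $h$ if and only if $\la h\otimes f,\Psi_1(h\otimes f)\ra_{\hc^{\otimes(n+1)}}\le0$ for all $n\ge1$, $h\in\hc$ and $f\in\hc^{\cd n}$; for the necessity half only $n=1$ will be needed, and for sufficiency it is convenient to prove this even for arbitrary $f\in\hc^{\otimes n}$.

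For sufficiency, assume $Q$ is negative semidefinite and fix $h$, $n$ and $f\in\hc^{\otimes n}$. Choosing an orthonormal basis $(e_j)$ of $\hc$, writing $f=\sum_j e_j\otimes g_j$ with $g_j\in\hc^{\otimes(n-1)}$, and recalling that $\Psi_1$ acts as $\Psi$ (cf.\ \eqref{ftyde6e7}) on the first two tensor legs and trivially on the rest, one obtains
\begin{equation*}
\la h\otimes f,\Psi_1(h\otimes f)\ra=\sum_{j,k}\la g_k,g_j\ra\int_{T^2}Q(s,t)\,\overline{h(s)}\,e_j(s)\,\overline{e_k(t)}\,h(t)\,\sigma(ds)\,\sigma(dt).
\end{equation*}
The Gram matrix $(\la g_k,g_j\ra)_{j,k}$ is positive semidefinite; factoring it and regrouping the double sum rewrites the right-hand side as $\sum_l\int_{T^2}Q(s,t)\,\psi_l(s)\,\overline{\psi_l(t)}\,\sigma(ds)\,\sigma(dt)$, where $\psi_l=\overline h\,v_l$ with $v_l\in\hc$ and $\sum_l\|v_l\|_{\hc}^{2}=\|f\|_{\hc^{\otimes n}}^{2}$. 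Hence each $\psi_l\in L^1(T,\sigma)$ and $\sum_l\|\psi_l\|_{L^1}^{2}\le\|h\|_{\hc}^{2}\|f\|_{\hc^{\otimes n}}^{2}<\infty$, which also legitimises interchanging the sum and the integral. Since $|Q|=1$, the defining inequality \eqref{negative definite}, although stated for $f\in B_0(T\mapsto\mathbb C)$, extends to all of $L^1(T,\sigma)$ by density of $B_0$ in $L^1$ and $L^1$-continuity of $\psi\mapsto\int_{T^2}Q(s,t)\psi(s)\overline{\psi(t)}\,\sigma(ds)\,\sigma(dt)$; so each summand, and thus the whole sum, is $\le0$, and the key identity gives $\|a^+(h)\|_{\fh}\le\|h\|_{\hc}$ (indeed equality, by the lower bound above).

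Conversely, suppose $\|a^+(h)\|_{\fh}\le\|h\|_{\hc}$ for every $h$. The key identity with $n=1$ (where $\hc^{\cd1}=\hc$ and $\Psi_1=\Psi$) forces $\int_{T^2}Q(s,t)\,\overline{h(s)}\,f(s)\,\overline{f(t)}\,h(t)\,\sigma(ds)\,\sigma(dt)\le0$ for all $h,f\in\hc$. Given $\phi\in B_0(T\mapsto\mathbb C)$, pick a compact $K\supseteq\operatorname{supp}\phi$ and apply this with $h:=\mathbf 1_K$ and $f:=\phi$; since $\overline h\,f=\phi$, this is exactly \eqref{negative definite}. Finally $a^-(h)=(a^+(h))^{*}$ on the dense subspace $\mathcal F^Q_{\mathrm{fin}}(\mathcal H)$, so $a^-(h)$ is bounded precisely when $a^+(h)$ is, with the same norm. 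The only genuinely non-trivial step is the telescoping key identity: precisely because the $n$-particle vectors are honestly $Q$-symmetric, the whole symmetrization sum in $\|a^+(h)f^{(n)}\|^2$ collapses, so the norm — which grows without bound in $n$ in the bosonic case $Q\equiv1$ — is controlled uniformly in $n$ by the single fixed two-body form $\la h\otimes f,\Psi(h\otimes f)\ra$ on simple tensors; once this is in hand, the remainder is the routine Gram-matrix manipulation and $L^1$-approximation above.
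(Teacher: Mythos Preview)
Your proof is correct. The paper itself does not give a proof of this proposition; it is quoted from \cite{LM} without argument, so there is no ``paper's proof'' to compare against beyond the reference. Your approach---deriving the collapsed identity
\[
\|a^+(h)f^{(n)}\|_{\fh}^{2}=\|h\|_{\hc}^{2}\,\|f^{(n)}\|_{\fh}^{2}+n\cdot n!\,\langle h\otimes f^{(n)},\Psi_1(h\otimes f^{(n)})\rangle
\]
from the inductive formula \eqref{jhfydfujuf} together with the $Q$-symmetry of $f^{(n)}$, and then reducing the sign of the second term to the negative semidefiniteness of $Q$ via a Gram factorization---is exactly the natural line of argument and is in the spirit of the original Liguori--Mintchev proof. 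The telescoping step (moving $(\Psi_k\dotsm\Psi_2)^*$ across the inner product and using $\Psi_jf^{(n)}=f^{(n)}$) is the only genuinely structural point, and you have it right. The $L^1$-extension of \eqref{negative definite} and the summability justification are handled cleanly; the necessity argument with $h=\mathbf 1_K$ is the obvious choice and works because $\sigma$ is Radon.
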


We will now show that, for each  anyon statistics with $q\ne-1$, the function $Q$ does not satisfy the condition of the above proposition.

\begin{proposition}\label{hdytd}
Assume that $Q(s, t)$ is an anyonic kernel \rom(so that  $Q(s, t)=q$ for $s<t$ with $q\in \mathbb C$,  $|q|=1$\rom).
Moreover, assume that there exist disjoint sets $\Delta_1,
\Delta_2 \in\mathcal B_0(T)$ such that $\sigma(\Delta_1)>0$,
$\sigma(\Delta_2)>0$ and for all $s\in \Delta_1$ and  $t\in
\Delta_2$ we have $s<t$. Then  condition $(\ref{negative
definite})$ is satisfied if and only if $q=-1$.
\end{proposition}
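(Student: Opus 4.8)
The plan is to test the negative semidefiniteness condition \eqref{negative definite} against a specific family of test functions supported on $\Delta_1\cup\Delta_2$, and to show that the condition forces $q=-1$, while conversely $q=-1$ makes $Q$ (essentially) the fermionic kernel, which is known to be negative semidefinite. For the ``if'' direction, when $q=-1$ the kernel $Q(s,t)$ equals $-1$ for $s\ne t$ and, being defined only $\sigma^{\otimes2}$-a.e., we may ignore the diagonal; then $\int_{T^2}Q(s,t)f(s)\overline{f(t)}\,\sigma(ds)\,\sigma(dt)=-\big|\int_T f(s)\,\sigma(ds)\big|^2\le0$, so \eqref{negative definite} holds trivially.

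For the ``only if'' direction, I would take $f=\alpha\chi_{\Delta_1}+\beta\chi_{\Delta_2}$ with $\alpha,\beta\in\mathbb C$, where $\chi_{\Delta_i}$ is the indicator of $\Delta_i$. Writing $m_1:=\sigma(\Delta_1)>0$ and $m_2:=\sigma(\Delta_2)>0$, and using that every pair $(s,t)$ with $s\in\Delta_1$, $t\in\Delta_2$ satisfies $s<t$ (so $Q(s,t)=q$ and $Q(t,s)=\bar q$), while pairs inside $\Delta_1$ or inside $\Delta_2$ contribute with the diagonal having $\sigma^{\otimes2}$-measure zero and the off-diagonal contributing a bounded-but-unknown quantity — here I should be slightly careful, since within $\Delta_1$ the kernel is not constant. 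To sidestep this I would instead shrink $\Delta_1$ and $\Delta_2$ to smaller sets on which the relevant integrals behave cleanly, or better, observe that the quadratic form only needs to be evaluated; the within-block terms give $-c_1|\alpha|^2 m_1^2$ and $-c_2|\beta|^2 m_2^2$ for some real $c_1,c_2$ (by Hermitianity the diagonal blocks yield real quadratic forms), which can only help make the form more negative, so they do not obstruct the argument. The cross terms give $\alpha\bar\beta\, \bar q\, m_1 m_2 + \bar\alpha\beta\, q\, m_1 m_2 = 2m_1m_2\,\mathrm{Re}(q\,\bar\alpha\beta)$.

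The key step is then: \eqref{negative definite} must hold for all $\alpha,\beta$, in particular it must hold when the within-block terms are absent or negligible, giving $2m_1m_2\,\mathrm{Re}(q\,\bar\alpha\beta)\le (\text{nonpositive within-block terms})\le 0$ — but to extract information about $q$ I instead choose $\alpha,\beta$ to make $\mathrm{Re}(q\,\bar\alpha\beta)$ as large as possible relative to $|\alpha|^2+|\beta|^2$. Concretely, with $|\alpha|=|\beta|=1$ and the phase of $\bar\alpha\beta$ chosen so that $q\bar\alpha\beta=1$, the cross term equals $2m_1m_2>0$; then \eqref{negative definite} reads $0\ge -c_1 m_1^2 - c_2 m_2^2 + 2m_1m_2 + (\text{analogous }\Delta_1\leftrightarrow\Delta_1,\ \Delta_2\leftrightarrow\Delta_2\text{ off-diagonal terms})$. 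To close this cleanly I would pick $\Delta_1,\Delta_2$ to be, say, two small intervals (or use the hypothesis to replace them by subsets where $Q$ restricted to $\Delta_i\times\Delta_i$ is also $\equiv q$ on the half $s<t$, which is automatic for the anyonic kernel with a linear order), so that every diagonal-block off-diagonal term is also exactly $q$ or $\bar q$; then the whole quadratic form collapses to $-(m_1+m_2)^2 + (1-q)(1-\bar q)(\dots)$-type expressions, and requiring it to be $\le0$ for the phase choice $q\bar\alpha\beta=1$ forces $|1-q|^2$ small enough that $q=-1$ is the only possibility with $|q|=1$.

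The main obstacle I anticipate is bookkeeping the contribution of the off-diagonal part of the diagonal blocks $\Delta_1\times\Delta_1$ and $\Delta_2\times\Delta_2$: because $Q$ is not constant there in general, one needs either to invoke the linear order on $T^{(2)}$ to evaluate these exactly (giving terms like $q\,m_i^2/2$-ish once one accounts for the $s<t$ versus $s>t$ split, which by Hermitianity combine to $\mathrm{Re}(q)\,m_i^2$), or to use a limiting/localization argument to make them negligible. Once those terms are pinned down as $\mathrm{Re}(q)\,m_i^2$, the full form evaluated at the optimal phase becomes $-\mathrm{Re}(q)(m_1^2+m_2^2) + 2m_1m_2$, wait — I would need to recheck signs, but the upshot is an inequality of the form $2m_1m_2 \le \mathrm{Re}(q)(m_1^2+m_2^2)$ for all admissible $m_1,m_2>0$ arising from subsets, and since $\mathrm{Re}(q)\le 1$ with equality iff $q=1$, and $m_1=m_2$ is achievable, one gets $2\le 2\,\mathrm{Re}(q)$, i.e.\ $\mathrm{Re}(q)\ge1$, hence $q=1$ — but $q=1$ must then be excluded by a separate test (e.g.\ $\alpha=1,\beta=-1$ gives a positive value), leaving $q=-1$ after redoing the phase optimization correctly. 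The careful tracking of which phase choice is forced, and ruling out $q=1$, is where the real work lies; everything else is the routine quadratic-form computation sketched above.
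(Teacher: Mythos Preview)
Your approach is exactly the paper's: test \eqref{negative definite} with $f=\alpha\chi_{\Delta_1}+\beta\chi_{\Delta_2}$. The ``if'' direction is fine. But the ``only if'' computation goes off the rails at the end, and the gap is not just bookkeeping.

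First, the within-block terms: for the anyonic kernel the contribution of $\Delta_i\times\Delta_i$ is exactly
\[
\int_{\Delta_i^2}Q(s,t)\,\sigma^{\otimes2}(ds\,dt)\,|\alpha_i|^2
=2\operatorname{Re}(q)\cdot\tfrac{m_i^2}{2}\cdot|\alpha_i|^2
=\operatorname{Re}(q)\,|\alpha_i|^2 m_i^2,
\]
since the region splits into $\{s<t\}$ and $\{s>t\}$ with equal $\sigma^{\otimes2}$-measure $m_i^2/2$. So the full quadratic form is
\[
\operatorname{Re}(q)\bigl(|\alpha|^2 m_1^2+|\beta|^2 m_2^2\bigr)+2m_1m_2\operatorname{Re}(q\,\alpha\bar\beta).
\]
Your sign flip (writing $-\operatorname{Re}(q)$) is what leads you to the false conclusion $\operatorname{Re}(q)\ge1$ and the spurious need to ``exclude $q=1$''.

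Second, with $|\alpha|=|\beta|=1$ and phase chosen so that $q\alpha\bar\beta=1$, the form equals $\operatorname{Re}(q)(m_1^2+m_2^2)+2m_1m_2$, and requiring this to be $\le0$ only gives $\operatorname{Re}(q)\le -\dfrac{2m_1m_2}{m_1^2+m_2^2}$, which is strictly weaker than $\operatorname{Re}(q)\le-1$ unless $m_1=m_2$. You cannot assume $m_1=m_2$ from the hypothesis. The fix---and this is precisely the paper's choice---is to \emph{scale} the coefficients rather than the sets: take $\alpha=\dfrac{m_2}{m_1}\,\bar q$ and $\beta=1$ (the paper writes $a=m_1$, $b=m_2$). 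Then $|\alpha|^2m_1^2=|\beta|^2m_2^2=m_2^2$ and $q\alpha\bar\beta=m_2/m_1>0$, so the form collapses to
\[
2m_2^2\bigl(\operatorname{Re}(q)+1\bigr),
\]
which is $\le0$ iff $\operatorname{Re}(q)\le-1$ iff $q=-1$. One test function, no limiting argument, no separate exclusion of $q=1$.
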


\begin{remark}
Evidently, in the above proposition, the additional assumption on the space $T$  is satisfied in any reasonable example.
\end{remark}

\begin{proof}
Clearly, for $q=-1$,  condition $(\ref{negative
definite})$ is satisfied. To show the opposite, we set
$a:=\sigma(\Delta_1)$, $b:=\sigma(\Delta_2)$ and
$g(t):=\frac{b}{a}\, \overline{q} \, \chi_{\Delta_1}(t) +
\chi_{\Delta_2}(t)$. Here $\chi_{\Delta}$ denotes the indicator
function of a set $\Delta$. Then
\begin{align*}
&\int_{T^2} Q(s, t) g(s)\,\overline{g(t)}\, \sigma(ds)\,\sigma(dt)\\
&\quad =
\int_{\{s<t\}} q g(s)\,\overline{g(t)} \,\sigma(ds)\,\sigma(dt)
+
\int_{\{s>t\}} \overline{q}\, g(s)\overline{g(t)} \,\sigma(ds)\,\sigma(dt)\\
&\quad = \int_{\{s<t\}} q g(s)\,\overline{g(t)}\, \sigma(ds)\,\sigma(dt)+
\int_{\{s'<t'\}} \overline{q}\,
g(t')\,\overline{g(s')}\, \sigma(ds')\,\sigma(dt')\\
&\quad = 2 \operatorname{Re} \left(q  \int_{\{s<t\}}  g(s)\,\overline{g(t)}\, \sigma(ds)\,\sigma(dt) \right)\\
&\quad = 2 \operatorname{Re} \left(q \int_{(\Delta_1\times \Delta_1)\cap
\{s<t\}}\frac{b^2}{a^2} \,\sigma(ds)\,\sigma(dt) + q
\int_{(\Delta_2\times \Delta_2) \cap \{s<t\}}
 \sigma(ds)\,\sigma(dt)\right.\\
 &\qquad \quad\left.+q
\int_{\Delta_1}\sigma(ds)\int_{\Delta_2}\sigma(dt)\,
\frac{b}{a}\,  \overline{q}\right)\\
&\quad =\operatorname{Re}(qb^2+qb^2+2b^2) = 2b^2(\operatorname{Re}(q)+1),
\end{align*}
 which is $\le0$ if and only if $q=-1$.
\end{proof}

\begin{remark} Note that the assumption of Proposition~\ref{jhfdytd} is  stronger than the assumption  of boundedness of $a^+(h)$. So Proposition~\ref{hdytd} does not exclude the possibility of $a^+(h)$ being bounded with norm $>\|h\|_{\hc}$. Let us  make the following observation.  Let $\Delta\in\mathcal B_0(T)$. Let $\mathcal F^Q_\Delta$ denote the closed linear subspace of $\mathcal F^Q(\mathcal H)$ spanned by the vectors $\Omega$ and $\chi_\Delta^{\cd n}$, $n\in\mathbb N$. Note that $\mathcal F^Q_\Delta$ is an infinite dimensional space if and only if $q^n\ne1$ for all $n\in\mathbb N$. Evidently, $\mathcal F^Q_\Delta$ is an invariant subspace under the action of the creation operator $a^+(\chi_\Delta)$.
Assume that $q\ne1$. Then, using \eqref{iue6w5}, we have, for each $n\in\mathbb N$,
\begin{align*}
\|\chi_\Delta^{\cd n}\|_{\mathcal F^Q(\mathcal H)}^2&=n!\, \|\chi_\Delta^{\cd n}\|^2_{\mathcal H_{\mathbb C}^{\cd n}}\\
&=n!\left|\frac{[n]_q!}{n!}\right|^2\,\sigma(\Delta)^n\\
&=\big|[n]_q!\big|^2\,\frac{\sigma(\Delta)^n}{n!}\,.
\end{align*}
Therefore, the norm of the   operator $a^+(\chi_\Delta)$ restricted to $\mathcal F^Q_\Delta$ is equal to
$$\sup_{n\in\mathbb N}\frac{\big|[n]_q\big|}{\sqrt n}\,
\sigma(\Delta)^{1/2}=\sup_{n\in\mathbb N}\frac{|1-q^n|}{|1-q|\,\sqrt n}\,\sigma(\Delta)^{1/2}\le \frac2{|1-q|}\,\sigma(\Delta)^{1/2}.$$
In the boson case ($q=1$), the operator $a^+(\chi_\Delta)$ restricted to $\mathcal F^Q_\Delta$ is unbounded.
\end{remark}

Our next aim is to discuss the creation and annihilation operators at points of the space $T$.
At least informally, for each $t\in T$ we may consider a delta function at $t$, denoted by $\delta_t$.
Then we can heuristically define $\partial_t^\dag:=a^+(\delta_t)$ and $\partial_t:=a^-(\delta_t)$, so that
$$\partial_t^\dag f^{(n)}=\delta_t\cd f^{(n)},\quad \partial_t f^{(n)}=nf^{(n)}(t,\cdot).$$
Thus,
$$
a^+(h)=\int_T\sigma(dt)\, h(t)\partial_t^\dag,\quad
a^-(h)=\int_T\sigma(dt)\, \overline{h(t)}\,\partial_t.
$$
Such integrals are, as usual, understood through
 the corresponding quadratic forms with test functions, e.g.\ \cite{RS2} (see also formulas \eqref{fydyrr}, \eqref{fdswrty} below).

\begin{remark} Note that, by Proposition~\ref{ctst}, for any $f_1,\dots,f_n\in\hc$, we have
$$
\partial_t f_1\cd f_2\cd\dotsm\cd f_{n}=
\sum_{k=1}^n f_k(t)\big(Q(t,\cdot)f_1\big)\cd\dotsm\cd \big(Q(t,\cdot)f_{k-1}\big)\cd  f_{k+1}\cd \dotsm \cd f_n. $$
\end{remark}

Let $B_0(T^n\mapsto \mathbb C)$ denote the space of all complex-valued bounded measurable functions on $T^n$ with compact support.
Fix any sequence of $+$ and $-$ of length $n\ge 2$, and denote
it by $(\sharp_1, \dots , \sharp_n)$. It is easy to see that, for any
$g^{(n)}\in B_0(T^n\mapsto \mathbb C)$,  the expression
\[
\int_{T^n}\sigma(dt_1)\dotsm \sigma(dt_n)\,g^{(n)}(t_1, \dots ,
t_n)\partial^{\sharp_1}_{t_1}\dotsm \partial^{\sharp_n}_{t_n},
\]
identifies a linear continuous operator on $\mathcal  F_{\mathrm{fin}}^Q(\mathcal H)$. Here
 we used the notation $\partial_t^+:=\partial_t^\dag$, $\partial^-_t:=\partial_t$. (In fact, the class of functions $g^{(n)}$ could be chosen significantly larger than $B_0(T^n\mapsto\mathbb C)$ but we are not going to discuss this.)

\begin{proposition}
The creation and annihilation operators satisfy the commutation
relations:
\begin{align}
\partial_s\partial^\dag_t &= \delta(s, t)+Q(s, t)\partial^\dag_t\partial_s,\label{fdytde} \\
\partial_s\partial_t &= Q(t,s)\partial_t\partial_s ,\label{jhgyufd}\\
\partial^\dag_s\partial^\dag_t &= Q(t, s)\partial^\dag_t\partial^\dag_s .\label{pr++}
\end{align}
Here  $\delta(s, t)$ is understood as:
\[
\int_{T^2}\sigma(ds)\sigma(dt)\,f^{(2)}(s, t)\delta(s, t) :=
\int_{T}\sigma(dt)\, f^{(2)}(t, t).
\]
Formulas \eqref{fdytde}--\eqref{pr++} make  rigorous sense after smearing with  (test) functions $g^{(2)}\in B_0(T^2\mapsto\mathbb C)$
 and using the corresponding quadratic forms.
\end{proposition}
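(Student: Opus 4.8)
The plan is to prove the $Q$-commutation relations \eqref{fdytde}--\eqref{pr++} by testing against smearing functions and reducing everything to already-established facts about the concrete creation and annihilation operators $a^{\pm}(h)$ on $\mathcal F^Q(\mathcal H)$. Concretely, for a test function $g^{(2)}\in B_0(T^2\to\mathbb C)$ I would define the smeared operators $\int_{T^2}\sigma(ds)\,\sigma(dt)\,g^{(2)}(s,t)\,\partial_s^{\sharp_1}\partial_t^{\sharp_2}$ via their quadratic forms on $\mathcal F^Q_{\mathrm{fin}}(\mathcal H)$, using that (as noted just above the proposition) such expressions are continuous linear operators on $\mathcal F^Q_{\mathrm{fin}}(\mathcal H)$. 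It suffices to verify each identity on vectors of the form $f_1\circledast\dots\circledast f_n$ with $f_i\in\hc$, since finite linear combinations of these are dense in each $\hc^{\circledast n}$, hence in $\mathcal F^Q_{\mathrm{fin}}(\mathcal H)$.

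Next I would compute both sides of each relation explicitly on such a vector using Proposition~\ref{ctst} (the action of $a^-(h)$ on a $Q$-symmetric tensor product) and formula \eqref{dstyufut} (the action of the creation operator), together with the Remark following it giving $\partial_t(f_1\cd\dots\cd f_n)=\sum_k f_k(t)(Q(t,\cdot)f_1)\cd\dots\cd(Q(t,\cdot)f_{k-1})\cd f_{k+1}\cd\dots\cd f_n$. For \eqref{pr++} this is essentially immediate: applying $\partial_s^\dag\partial_t^\dag$ to $f^{(n)}$ gives $\delta_s\cd\delta_t\cd f^{(n)}$, and by Proposition~\ref{cftrst} (the $Q$-symmetrization formula) swapping the first two slots of $\delta_s\otimes\delta_t\otimes f^{(n)}$ inside $P_{n+2}$ produces exactly the factor $Q(t,s)$, so $\delta_s\cd\delta_t\cd f^{(n)} = Q(t,s)\,\delta_t\cd\delta_s\cd f^{(n)}$; equivalently one uses $\Psi_1(\delta_s\otimes\delta_t\otimes f^{(n)}) = Q(s,t)\,\delta_t\otimes\delta_s\otimes f^{(n)}$ and $P_{n+2}\Psi_1 = P_{n+2}$. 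Relation \eqref{jhgyufd} is the adjoint of \eqref{pr++} (taking adjoints of the smeared operators and using $Q(s,t)=\overline{Q(t,s)}$), or can be verified directly from the Remark by interchanging the roles of the two evaluations, where the relative ordering of the two points contributes the single factor $Q(t,s)$.

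The main work is \eqref{fdytde}. On $f^{(n)}$, applying $\partial_t^\dag$ gives $\delta_t\cd f^{(n)}$, and then $\partial_s$ acts; using the Remark with the $(n+1)$-fold product $\delta_t\cd f^{(n)}$ one gets a $k=1$ term in which $s$ is paired directly with the newly created $\delta_t$-slot — this term evaluates the delta and produces $\delta(s,t)f^{(n)}$ — plus the remaining terms ($k\ge2$) in which $s$ is paired with one of the original slots of $f^{(n)}$, each carrying a factor $Q(s,t)$ coming from the slot that now holds $\delta_t$ sitting to the left of the evaluated slot. Collecting the latter terms and comparing with $Q(s,t)\,\partial_t^\dag\partial_s f^{(n)} = Q(s,t)\,\delta_t\cd(\partial_s f^{(n)})$, again expanded via \eqref{dstyufut}, one sees they coincide. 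The bookkeeping of the $Q$-prefactors — making sure that the product of $Q$-values picked up when $\delta_t$ is shuffled past earlier points matches exactly the single overall factor $Q(s,t)$ after the delta evaluation collapses the $s$- and $t$-variables — is the one delicate point, and it is precisely here that the identity $Q(s,s)$-type degeneracies are harmless because the diagonal has $\sigma^{\otimes2}$-measure zero, so all manipulations are valid $\sigma^{\otimes2}$-almost everywhere. Finally I would note that, since $a^-(h)$ is only densely defined as an adjoint (and possibly unbounded when $Q$ is not negative semidefinite, by Proposition~\ref{jhfdytd}), all equalities are asserted as identities of quadratic forms on $\mathcal F^Q_{\mathrm{fin}}(\mathcal H)\times\mathcal F^Q_{\mathrm{fin}}(\mathcal H)$ after smearing, which is exactly the sense stated in the proposition; no boundedness is needed for the argument.
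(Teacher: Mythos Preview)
Your argument is correct and follows essentially the same route as the paper. For \eqref{pr++} you use $P_{n+2}\Psi_1=P_{n+2}$, which is exactly what the paper invokes (``analogously to the proof of Proposition~\ref{jvfytdy}, $P_n=P_n\Psi_1$''), and \eqref{jhgyufd} is obtained by adjoints in both cases. For \eqref{fdytde} the only difference is cosmetic: the paper computes $a^-(g)a^+(h)f^{(n)}$ for a general $f^{(n)}\in\hc^{\cd n}$ directly from \eqref{dstyufut} and Proposition~\ref{kds5u6r}, whereas you restrict to vectors $f_1\cd\dotsm\cd f_n$ and use the derived formula of Proposition~\ref{ctst}; since such vectors are total, the two computations are equivalent. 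One small slip: in your parenthetical ``$\Psi_1(\delta_s\otimes\delta_t\otimes f^{(n)})=Q(s,t)\,\delta_t\otimes\delta_s\otimes f^{(n)}$'' the factor should be $Q(t,s)$ (as you in fact write in the preceding clause), consistent with the definition \eqref{operators Psi_j}.
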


\begin{proof} Analogously to the proof of Proposition~\ref{jvfytdy}, we conclude that $P_n=P_n\Psi_1$, from where \eqref{pr++} follows. Formula \eqref{jhgyufd} is then derived by taking the adjoint operators.

To show formula \eqref{fdytde}, we note that, by \eqref{dstyufut} and Proposition~\ref{kds5u6r},
\begin{multline*}
(a^-(g)a^+(h)f^{(n)})(t_1,\dots,t_n)=\int_T\sigma(ds)\,\overline{g(s)}\,\bigg[h(s)f^{(n)}(t_1,\dots,t_n)\\
\text{}+\sum_{k=1}^n Q(s,t_k)Q(t_1,t_k)\dotsm Q(t_{k-1},t_k)f^{(n)}(s,t_1,\dots,\check t_k,\dots,t_n)\bigg]
\end{multline*}
for any  $g,h\in B_0(T\mapsto \mathbb C)$, $f^{(n)}\in\hc^{\cd n}$.
On the other hand,
$$ \int_T\sigma(ds)\int_T\sigma(dt)\,\overline{g(s)}\,
h(t)Q(s,t)\partial_t^\dag\partial_s f^{(n)} =P_nu^{(n)},$$
where
$$ u^{(n)}(t_1,\dots,t_n):= n\int_T\sigma(ds)\,\overline{g(s)}\, (h(t_1)Q(s,t_1)) f^{(n)}(s,t_2,t_3,\dots,t_n).$$
From here, by \eqref{dstyufut}, the statement follows.
\end{proof}

We finish this section by introducing  neutral (or preservation) operators.
For a function $h\in L^{\infty}(T\mapsto \mathbb C, \sigma)$ we define a {\it neutral
operator\/} by
\begin{equation}\label{fydyrr}
a^0(h):= \int_{T} \sigma(ds) h(s)\partial^\dag_s\partial_s.
\end{equation}
A meaning to this formula is again given through the corresponding quadratic form: if
$f^{(n)},g^{(n)}\in\hc^{\cd n}$, then
\begin{align}
&(a^0(h)f^{(n)},g^{(n)})_{\mathcal F^Q(\mathcal H)}=\int_T\sigma(ds)h(s)
(\partial_s f^{(n)},\partial_s g^{(n)})_{\mathcal F^Q(\mathcal H)}\notag\\
&=(n-1)!\,n^2 \int_T\sigma(ds)h(s)\int_{T^{n-1}}\sigma(dt_1)\dotsm
\sigma(dt_{n-1}) f^{(n)}(s, t_1, \dots , t_{n-1})\,
\overline{g^{(n)}(s, t_1, \dots , t_{n-1})}\notag\\
&=(n)!\,n\int_{T^{n}}\sigma(dt_1)\dotsm
\sigma(dt_{n}) h(t_1)f^{(n)}(t_1, t_2, \dots , t_{n})\,
\overline{g^{(n)}(t_1, t_2, \dots , t_{n})}.\label{fdswrty}
\end{align}
We note that
\begin{align}
&\int_{T^{n}}\sigma(dt_1)\dotsm
\sigma(dt_{n}) h(t_1)f^{(n)}(t_1, t_2, \dots , t_{n})\,
\overline{g^{(n)}(t_1, t_2, \dots , t_{n})}\notag\\
&\quad= \int_{T^{n}}\sigma(dt_1)\dotsm
\sigma(dt_{n}) h(t_1)Q(t_1,t_2)f^{(n)}(t_2, t_1,t_3 \dots , t_{n})\,
\overline{Q(t_1,t_2)g^{(n)}(t_2, t_1,t_3 \dots , t_{n})}\notag\\
&\quad=\int_{T^{n}}\sigma(dt_1)\dotsm
\sigma(dt_{n}) h(t_2)f^{(n)}(t_1, t_2, \dots , t_{n})\,
\overline{g^{(n)}(t_1, t_2, \dots , t_{n})}\notag\\
&\quad=\dots=\int_{T^{n}}\sigma(dt_1)\dotsm
\sigma(dt_{n}) h(t_n)f^{(n)}(t_1, t_2, \dots , t_{n})\,
\overline{g^{(n)}(t_1, t_2, \dots , t_{n})}.\label{des}
\end{align}
Hence, by \eqref{fdswrty} and \eqref{des},
\begin{multline*}(a^0(h)f^{(n)},g^{(n)})_{\mathcal F^Q(\mathcal H)}
\\
=n!\int_{T^{n}}\sigma(dt_1)\dotsm \sigma(dt_{n})\big(h(t_1)+\dots+h(t_n)\big)f^{(n)}(t_1, t_2, \dots , t_{n})\,
\overline{g^{(n)}(t_1, t_2, \dots , t_{n})}.\end{multline*}
Since the function $h(t_1)+\dots+h(t_n)$ is symmetric in the classical sense and the function $f^{(n)}$
is $Q$-symmetric, the function $(h(t_1)+\dots+h(t_n))f^{(n)}(t_1,  \dots , t_{n})$ is $Q$-symmetric.
Hence, $a^0(h)$ is the continuous operator on  $\mathcal F_{\mathrm{fin}}^Q(\mathcal H)$ given by
\begin{equation}\label{dse5ws} (a^0(h)f^{(n)})(t_1,\dots,t_n)=\big(h(t_1)+\dots+h(t_n)\big)f^{(n)}(t_1,  \dots , t_{n})\end{equation}
for $f^{(n)}\in\mathcal H_{\mathbb C}^{\cd n}$. Note also that, for any $f_1,\dots,f_n\in\hc$,
$$
(a^0(h)f_1\cd\dotsm\cd f_n)(t_1,\dots,t_n)=P_n\big[(h(t_1)+\dots+h(t_n))f_1(t_1)\dotsm f_n(t_n)\big].
$$ Therefore,
\begin{equation}\label{dd} a^0(h)f_1\cd\dotsm \cd f_n=\sum_{i=1}^n f_1\cd\dotsm\cd f_{i-1}\cd(hf_i)\cd f_{i+1}\cd\dotsm\cd f_n.\end{equation}

It can be easily deduced from \eqref{dse5ws} that, if $h\ne0$, the operator $a^0(h)$ is always unbounded in $\mathcal F^Q(\mathcal H)$.

\begin{remark}Let $A$ be a bounded linear operator in $\hc$. In \cite{LM}, a {\it differential second quantization  of\/} $A$ was defined as a linear operator $d\Gamma(A)$ in $\mathcal F^Q(\mathcal H)$ with domain $\mathcal F_{\mathrm{fin}}^Q(\mathcal H)$
given by
\begin{equation}\label{vgdftu} d\Gamma(A)\restriction \hc^{\cd n}:=P_n(A\otimes \mathbf 1\otimes\dots\otimes \mathbf 1+
\mathbf 1\otimes A\otimes \mathbf 1\otimes\dots\otimes\mathbf 1+\mathbf 1\otimes\dots\otimes\mathbf1\otimes A).\end{equation}
We clearly have $a^0(h)=d\Gamma(M_h)$, where $M_h$ is the operator of multiplication by the function $h$. Note that, in this case, the operator
$$M_h\otimes \mathbf 1\otimes\dots\otimes \mathbf 1+
\mathbf 1\otimes M_h\otimes \mathbf 1\otimes\dots\otimes\mathbf 1+\mathbf 1\otimes\dots\otimes\mathbf1\otimes M_h$$
acts invariantly on $\hc^{\cd n}$, so that the $Q$-symmetrization operator, $P_n$, in formula \eqref{vgdftu} may now be omitted.

Note also that, in the case of $q$-commutation relations with $q$ being real and $-1<q<1$ (see \cite{BS}), a corresponding differential second quantization, introduced by Anshelevich in \cite{A1}, appears to  be always a bounded operator (Lemma~1 in \cite{A1}), whereas our neutral operators, $a^0(h)$, are unbounded.
\end{remark}

\section{Q-Hermite and $Q$-Charlier polynomials}\label{cfydry}

We will now introduce  noncommutative analogs of Gaussian and Poisson processes (white noise measures) for $Q$-commutation relations. We denote by $B_0(T)$ the set of all real-valued bounded Borel-measurable function on $T$ with compact support. Let $\lambda\in\mathbb R$ be a fixed parameter. We consider a family of operators $(\la f,\omega\ra)_{f\in B_0(T)}$ defined by
$$ \la f,\omega\ra=a^+(f)+\lambda a^0(f)+a^-(f).$$
Choosing $\lambda=0$ corresponds to the $Q$-Gaussian case, while $\lambda=1$ corresponds to the (centered) $Q$-Poisson. (We will actually refer to each case $\lambda\ne0$ as  $Q$-Poisson.)
Each operator $\la f,\omega\ra$ acts continuously on  $\mathcal F_{\mathrm{fin}}^Q(\mathcal H)$ and is Hermitian in $\mathcal F^Q(\h)$. In fact, it can be easily shown by analogy with the classical (boson) case, see e.g.\ \cite{RS2,L1}, that each  $F\in \mathcal F_{\mathrm{fin}}^Q(\mathcal H)$ is an analytic vector for each operator 
$\la f,\omega\ra$ with $f\in B_0(T)$. Hence, each 
  operator $\la f,\omega\ra$ is essentially self-adjoint on $\mathcal F_{\mathrm{fin}}^Q(\mathcal H)$ (compare with \cite[Proposition~3]{LM}).

If we denote
$$\omega(t):=\partial_t^\dag+\lambda\partial_t^\dag\partial_t+\partial_t,\quad t\in T,$$
then, using our usual notation,
$$ \la f,\omega\ra=\int_T\sigma(dt)f(t)\omega(t),\quad f\in B_0(T),$$ which justifies the notation $\la f,\omega\ra$.

Let $\mathscr P$ denote the complex unital $*$-algebra generated by $(\la f,\omega\ra)_{f\in B_0(T)}$, i.e., the algebra of noncommutative polynomials in the variables $\la f,\omega\ra$. In particular, elements of $\mathscr P$ are linear operators acting on $\mathcal F_{\mathrm{fin}}^Q(\mathcal H)$, and for each $p\in\mathscr P$, $p^*$ is the adjoint operator of $p$ in $\mathcal F^Q(\h)$.
We define a vacuum state on $\mathscr P$ by
\begin{equation}\label{ty6w65uw6}\tau(p):=(p\Omega,\Omega)_{\fh},\quad p\in\mathscr P.\end{equation} We introduce a scalar product on $\mathscr P$
by
$$(p_1,p_2)_{L^2(\tau)}:=\tau(p_2^*p_1), \quad p_1,p_2\in\mathscr P.$$
Let $\mathscr P_0:=\{p\in\mathscr P\mid (p,p)_{L^2(\tau)}=0\}$, and define the noncommutative $L^2$-space $L^2(\tau)$ as the completion  of the quotient space $\mathscr P/\mathscr P_0$ with respect to the norm generated by the scalar product $(\cdot,\cdot)_{L^2(\tau)}$. Elements $p\in\mathscr P$ are treated as representatives of the equivalence classes from
$\mathscr P/\mathscr P_0$, and so $\mathscr P$ becomes a dense subspace of $L^2(\tau)$. (This has just been the  Gelfand--Naimark--Segal construction for $\mathscr P$ at the vacuum state $\tau$.)

\begin{proposition}\label{drawjh} \rom{(i)} The vacuum vector $\Omega$ is cyclic for the family of operators\linebreak $(\la f,\omega\ra)_{f\in B_0(T)}$.

\rom{(ii)} Consider a linear mapping $I:\mathscr P\to\fh$ defined by $Ip:=p\Omega$ for $p\in\mathscr P$. Then $Ip$ does not depend on the choice of $p\in\mathscr P/\mathscr P_0$ and $I $extends to a unitary operator $I:L^2(\tau)\to\fh$.
\end{proposition}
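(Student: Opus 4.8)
The plan is to prove both parts essentially simultaneously, since the key issue is the same: understanding the range of the map $I$ and showing it is norm-preserving. First I would show that the closed linear span of $\{p\Omega \mid p\in\mathscr P\}$ is all of $\fh$; this is precisely statement (i). The natural strategy is an induction on the number of particles: $\Omega$ itself is in the span ($p=\mathbf 1$), and one shows that if all $Q$-symmetric tensors of the form $f_1\cd\dots\cd f_{n-1}$ with $f_i\in B_0(T)$ (these are dense in $\hc^{\cd(n-1)}$) lie in the closure of $\mathscr P\Omega$, then so do all $f_1\cd\dots\cd f_n$. For this one computes $\la f_n,\omega\ra (f_1\cd\dots\cd f_{n-1})$ using $\omega(t)=\partial_t^\dag+\lambda\partial_t^\dag\partial_t+\partial_t$: the creation part contributes exactly $f_n\cd f_1\cd\dots\cd f_{n-1}$ (an $n$-particle vector), while the neutral part (via \eqref{dd}) and the annihilation part (via Proposition~\ref{ctst}) contribute vectors of at most $n-1$ particles. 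Hence modulo vectors already known to be in the closure, $f_n\cd f_1\cd\dots\cd f_{n-1}$ lies in the closure, completing the induction.

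Next I would address (ii). That $Ip$ is well-defined on $\mathscr P/\mathscr P_0$ is immediate: if $(p,p)_{L^2(\tau)}=0$ then $\|p\Omega\|^2_{\fh}=\tau(p^*p)=(p,p)_{L^2(\tau)}=0$, so $p\Omega=0$; thus $p\mapsto p\Omega$ factors through the quotient. The same computation shows $I$ is an isometry from $\mathscr P/\mathscr P_0$ (with the $L^2(\tau)$-norm) into $\fh$: for any $p\in\mathscr P$,
\[
\|Ip\|^2_{\fh}=(p\Omega,p\Omega)_{\fh}=(p^*p\,\Omega,\Omega)_{\fh}=\tau(p^*p)=(p,p)_{L^2(\tau)}.
\]
Since $\mathscr P/\mathscr P_0$ is by definition dense in $L^2(\tau)$ and $I$ is a densely defined isometry, it extends uniquely to an isometry $I:L^2(\tau)\to\fh$. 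Surjectivity of this extension is exactly part (i): the range of $I$ contains $\mathscr P\Omega$, which by (i) is dense in $\fh$, and the range of an isometry on a complete space is closed; hence $I$ is onto, i.e., unitary.

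As for the main obstacle: the routine linear-algebra and GNS bookkeeping in (ii) is essentially automatic once (i) is in hand, so the real content is the cyclicity statement (i), and within that, the inductive step. The subtlety there is purely organizational: one must carefully track that applying $\la f,\omega\ra$ to an $(n-1)$-particle vector produces the desired top-degree term $f\cd(\text{rest})$ plus only strictly-lower-degree terms, so that the induction closes. One should also note that the vectors $f_1\cd\dots\cd f_n$ with $f_i\in B_0(T)$ span a dense subspace of $\hc^{\cd n}$ (since $B_0(T)$ is dense in $\hc$ and $P_n$ is continuous), which is needed to pass from the algebraically reachable vectors to all of each $n$-particle space. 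I do not anticipate any genuine analytic difficulty; essential self-adjointness of the $\la f,\omega\ra$, already noted in the excerpt, is not even needed for this proposition.
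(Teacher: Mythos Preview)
Your proposal is correct and matches the approach the paper has in mind: the paper's own proof of this proposition is just two sentences, saying that (i) ``can be easily shown by analogy with the boson case'' and that (ii) ``immediately follows from part i).'' Your induction on the particle number is exactly the standard boson-case argument, and your derivation of (ii) from (i) is the routine GNS computation the paper leaves implicit; in fact the paper later spells out the same inductive scheme in the proof of the L\'evy-case analogue (Proposition~\ref{fhysdyts}).
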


\begin{proof} Part i) can be easily shown by analogy with the boson case (see e.g.\ \cite{RS2} or \cite{L1}). Part ii) immediately follows from part i).
\end{proof}

For each $n\in\mathbb Z_+:=\{0,1,2,\dots\}$, we denote by $\mathscr P^{(n)}$ the subset of $\mathscr P$ which consists of all polynomials of degree $\le n$, i.e., the linear span of monomials
$$\la f_1,\omega\ra\dotsm\la f_k,\omega\ra=:\la f_1\otimes \dots\otimes f_k,\omega^{\otimes k}\ra$$
with $f_1,\dots,f_k\in B_0(T)$, $k\le n$, and constants. Let $\mathscr{MP}^{(n)}$ denote the closure of $\mathscr P^{(n)}$ in $L^2(\tau)$. ($\mathscr {MP}$ stands for {\it measurable polynomials}.) Let $\mathscr{OP}^{(n)}:=\mathscr{MP}^{(n)}\ominus\mathscr{MP}^{(n-1)}$, $n\in\mathbb N$, and $\mathscr{OP}^{(0)}:=\mathscr{MP}^{(0)}$, where the sign $\ominus$ denotes orthogonal difference in $L^2(\tau)$. ($\mathscr{OP}$ stands for {\it orthogonal polynomials}.) Since $\mathscr P$ is dense in $L^2(\tau)$ we get the orthogonal decomposition
$L^2(\tau)=\bigoplus_{n=0}^\infty \mathscr{OP}^{(n)}$.

For any $f_1,\dots,f_n\in B_0(T)$, the monomial $\la f_1\otimes \dots\otimes f_n,\omega^{\otimes n}\ra$ evidently belongs to $\mathscr {MP}^{(n)}$, and we denote its orthogonal projection onto $\mathscr{OP}^{(n)}$ by
  $\la f_1\otimes \dots\otimes f_n,{:}\,\omega^{\otimes n}\,{:}\ra$. The latter is a $Q$-analog of an (infinite-dimensional) Hermite polynomial if $\lambda=0$, respectively Charlier polynomial if $\lambda=1$.

  \begin{proposition}\label{ds5}
We have
$$\la f_1\otimes \dots\otimes f_n,{:}\,\omega^{\otimes n}\,{:}\ra=I^{-1}(f_1\cd\dotsm \cd f_n).$$
\end{proposition}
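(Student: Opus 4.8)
The plan is to show that the two families of vectors $\la f_1\otimes\dots\otimes f_n,{:}\,\omega^{\otimes n}\,{:}\ra$ and $I^{-1}(f_1\cd\dotsm\cd f_n)$ coincide by checking two things: first, that $f_1\cd\dotsm\cd f_n$ differs from $I(\la f_1\otimes\dots\otimes f_n,\omega^{\otimes n}\ra)=\la f_1\otimes\dots\otimes f_n,\omega^{\otimes n}\ra\Omega$ by a vector lying in $I(\mathscr{MP}^{(n-1)})$; and second, that $f_1\cd\dotsm\cd f_n$, viewed inside $\mathcal F^Q(\mathcal H)$ as the purely $n$-particle vector $(0,\dots,0,f_1\cd\dotsm\cd f_n,0,\dots)$, is orthogonal to $I(\mathscr{MP}^{(n-1)})$. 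Granting these, $I^{-1}(f_1\cd\dotsm\cd f_n)$ is exactly the orthogonal projection of the monomial $\la f_1\otimes\dots\otimes f_n,\omega^{\otimes n}\ra$ onto $\mathscr{OP}^{(n)}=\mathscr{MP}^{(n)}\ominus\mathscr{MP}^{(n-1)}$, which is by definition $\la f_1\otimes\dots\otimes f_n,{:}\,\omega^{\otimes n}\,{:}\ra$.

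The first step is a computation with the three summands of $\la f,\omega\ra=a^+(f)+\lambda a^0(f)+a^-(f)$. Applying $\la f_1,\omega\ra\dotsm\la f_n,\omega\ra$ to $\Omega$ and expanding, one sees by induction on $n$ that the top-degree contribution, obtained by always picking the creation part $a^+$, is precisely $a^+(f_1)\dotsm a^+(f_n)\Omega=f_1\cd f_2\cd\dotsm\cd f_n$ (using $a^+(h)g^{(m)}=h\cd g^{(m)}$ from \eqref{Q-creation/annihilation} and associativity of $\cd$). Every other choice of summands inserts at least one $a^0$ or $a^-$, and since $a^-$ lowers degree by one, $a^0$ preserves it, and $a^+$ raises it by one, each such term lands in $\bigoplus_{m\le n-1}\hc^{\cd m}m!$, i.e.\ it is $I$ of a polynomial of degree $\le n-1$ (indeed it is again a polynomial expression in the $\la f_j,\omega\ra$ of lower degree applied to $\Omega$). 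Hence $\la f_1\otimes\dots\otimes f_n,\omega^{\otimes n}\ra\Omega - f_1\cd\dotsm\cd f_n \in I(\mathscr{MP}^{(n-1)})$.

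For the second step I would use that $I(\mathscr{MP}^{(n-1)})$ is the closure of the span of vectors $\la g_1,\omega\ra\dotsm\la g_k,\omega\ra\Omega$ with $k\le n-1$, and that each such vector, by the same degree bookkeeping, lies in $\bigoplus_{m=0}^{n-1}\hc^{\cd m}m!$, which is orthogonal in $\mathcal F^Q(\mathcal H)$ to the $n$-particle subspace $\hc^{\cd n}n!$ containing $f_1\cd\dotsm\cd f_n$. This uses only the definition of the $Q$-Fock space as an orthogonal direct sum and continuity of the operators on $\mathcal F^Q_{\mathrm{fin}}(\mathcal H)$; taking closures preserves the orthogonality. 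Combining the two steps gives that $I^{-1}(f_1\cd\dotsm\cd f_n)$ is the orthogonal projection of the monomial onto $\mathscr{OP}^{(n)}$.

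The main obstacle, and the only place that needs genuine care, is the first step: making precise the claim that every term other than the all-creation term, arising when the product $\prod_{j=1}^n(a^+(f_j)+\lambda a^0(f_j)+a^-(f_j))$ is multiplied out and applied to $\Omega$, is $I$ of an element of $\mathscr{MP}^{(n-1)}$ rather than merely an $n$-particle vector. The cleanest way is an induction on $n$: write $\la f_1\otimes\dots\otimes f_n,\omega^{\otimes n}\ra\Omega = \la f_1,\omega\ra\big(\la f_2\otimes\dots\otimes f_n,\omega^{\otimes n-1}\ra\Omega\big)$, invoke the inductive hypothesis to split the inner vector as $f_2\cd\dotsm\cd f_n$ plus a degree-$\le n-2$ polynomial applied to $\Omega$, and then observe that $a^+(f_1)$ applied to the first piece gives $f_1\cd\dotsm\cd f_n$, while $\lambda a^0(f_1)$ and $a^-(f_1)$ applied to it, as well as $\la f_1,\omega\ra$ applied to the second piece, are all of the form (polynomial of degree $\le n-1$)$\,\cdot\,\Omega$ — here one uses that $a^0(f_1)=\la f_1,\omega\ra - a^+(f_1)-a^-(f_1)$ on $n$-particle vectors can be re-expressed, but more simply that $a^0(f_1)$ and $a^-(f_1)$ do not raise degree, so the resulting vectors lie in $\bigoplus_{m\le n-1}\hc^{\cd m}m!$ and, being obtained by applying operators from $\mathscr P$ to $\Omega$, lie in $I(\mathscr P^{(n-1)})$. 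I would also remark, as the paper does implicitly, that this is the exact $Q$-analog of the standard boson argument in \cite{RS2,L1}, so the induction is routine once set up.
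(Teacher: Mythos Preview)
Your overall strategy is right and matches the paper's: identify $I(\mathscr{OP}^{(n)})$ with $\hc^{\cd n}$ and observe that the $\hc^{\cd n}$-component of $\la f_1,\omega\ra\dotsm\la f_n,\omega\ra\Omega$ is the all-creation term $a^+(f_1)\dotsm a^+(f_n)\Omega=f_1\cd\dotsm\cd f_n$.

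The gap is in your handling of the ``main obstacle.'' In the inductive step you apply $\la f_1,\omega\ra$ to $f_2\cd\dotsm\cd f_n$, peel off $a^+(f_1)$, and then assert that the remainder $(\lambda a^0(f_1)+a^-(f_1))(f_2\cd\dotsm\cd f_n)$ lies in $I(\mathscr P^{(n-1)})$ because it is ``obtained by applying operators from $\mathscr P$ to $\Omega$.'' But $a^0(f_1)$ and $a^-(f_1)$ are \emph{not} elements of $\mathscr P$; the algebra $\mathscr P$ is generated by the sums $\la f,\omega\ra=a^+(f)+\lambda a^0(f)+a^-(f)$, not by the three pieces separately. So at this point you only know the remainder lies in $\bigoplus_{m\le n-1}\hc^{\cd m}$, and you have not yet shown that this space equals $I(\mathscr{MP}^{(n-1)})$---your Step~2 only gives the inclusion $I(\mathscr{MP}^{(n-1)})\subset\bigoplus_{m\le n-1}\hc^{\cd m}$.

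The fix, which is exactly what the paper does, is to prove the \emph{equality} $I(\mathscr{MP}^{(k)})=\bigoplus_{m=0}^k\hc^{\cd m}$ for every $k$. The paper deduces this from cyclicity of $\Omega$ (Proposition~\ref{drawjh}): since $I\mathscr P^{(k)}$ is dense in $\bigoplus_{m=0}^k\hc^{\cd m}$ (same argument as for cyclicity, just truncated at level $k$), one has $I(\mathscr{MP}^{(k)})=\bigoplus_{m=0}^k\hc^{\cd m}$, hence $I(\mathscr{OP}^{(n)})=\hc^{\cd n}$. With this in hand both of your steps become one line: the remainder lies in $\bigoplus_{m\le n-1}\hc^{\cd m}=I(\mathscr{MP}^{(n-1)})$ automatically, and $f_1\cd\dotsm\cd f_n\in\hc^{\cd n}=I(\mathscr{OP}^{(n)})$. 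Equivalently, your induction works once you strengthen the hypothesis to ``$I(\mathscr{MP}^{(k)})=\bigoplus_{m\le k}\hc^{\cd m}$ for all $k<n$,'' since then the degree-counting you already do immediately places the remainder in $I(\mathscr{MP}^{(n-1)})$.
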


\begin{proof} By analogy with the proof of Proposition \ref{drawjh}, one sees that the set $I\mathscr{P}^{(n)}$ is dense in
$\bigoplus_{k=0}^n\hc^{\cd k}$. Therefore, $I\mathscr{MP}^{(n)}=\bigoplus_{k=0}^n\hc^{\cd k}$. Hence,
$I\mathscr{OP}^{(n)}=\hc^{\cd n}$. But the projection of the vector $\la f_1\otimes \dots\otimes f_n,\omega^{\otimes n}\ra\Omega$ onto $\hc^{\cd n}$ is $$a^+(f_1)\dotsm a^+(f_n)\Omega=f_1\cd\dotsm\cd f_n,$$ from where the statement follows.
\end{proof}

Let us consider the  topology on $B_0(T\mapsto\mathbb C)$ which yields the following notion of convergence: $f_n\to f$ as $n\to\infty$ means that there exists a set $\Delta\in\mathcal B_0(T)$ such that
$\operatorname{supp}(f_n)\subset\Delta$ for all $n\in\mathbb N$ and
$$\sup_{t\in T}|f_n(t)-f(t)|\to0\quad\text{as }n\to\infty.$$
By linearity and continuity we can extend the mapping
$$B_0(T)^n\ni (f_1,\dots,f_n)\mapsto\la f_1\otimes\dots\otimes f_n,\omega^{\otimes n}\ra\in \mathscr L(\mathcal F_{\mathrm{fin}}^Q(\h))$$
   to a mapping
$$ B_0(T^n\mapsto\mathbb C)\ni f^{(n)}\mapsto \la f^{(n)},\omega^{\otimes n}\ra \in \mathscr L(\mathcal F_{\mathrm{fin}}^Q(\h)).$$
Here $\mathscr L(\mathcal F_{\mathrm{fin}}^Q(\h))$ denotes the space of all linear continuous operators on $\mathcal F_{\mathrm{fin}}^Q(\h)$. We can also identify each $\la f^{(n)},\omega^{\otimes n}\ra$  with an element of $\mathscr{MP}^{(n)}$, and denote by $\la f^{(n)},{:}\,\omega^{\otimes n}\,{:}\ra$ the orthogonal projection of $\la f^{(n)},\omega^{\otimes n}\ra$ onto $\mathscr{OP}^{(n)}$. By Proposition~\ref{ds5},
$$\la f^{(n)},{:}\,\omega^{\otimes n}\,{:}\ra=\la P_nf^{(n)},{:}\,\omega^{\otimes n}\,{:}\ra  =I^{-1}P_nf^{(n)} .$$
We will also use the notation
$$\la f^{(n)},{:}\,\omega^{\otimes n}\,{:}\ra=:\int_{T^n}\sigma(dt_1)\dotsm\sigma(dt_n)\,f^{(n)}(t_1,\dots,t_n)\,{:}\,\omega(t_1)\dotsm\omega(t_n)\,{:}\,.$$

\begin{proposition}\label{fd5w}
We have the following recurrence relations: $\wick{\omega(t)}=\omega(t)$ and for $n\ge2$
\begin{align}
&\wick{\omega(t_1)\omega(t_2)\dotsm\omega(t_{n})}=\omega(t_1)\,\wick{\omega(t_2)\dotsm\omega(t_{n})}
-\lambda\sum_{i=2}^{n}\delta(t_1,t_i)\wick{\omega(t_2)\dotsm\omega(t_{n})}\notag\\
&\quad-
\sum_{i=2}^{n}\delta(t_1,t_i)Q(t_1,t_2)Q(t_1,t_3)\dotsm Q(t_1,t_{i-1})\wick{\omega(t_2)\dotsm\check\omega(t_i)\dotsm\omega(t_{n})}\, ,\label{fdtyde6e}
\end{align}
where $Q(t_1,t_1):=1$. Equality \eqref{fdtyde6e} is rigorously understood after smearing with test functions.
\end{proposition}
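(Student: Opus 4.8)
The plan is to prove the recurrence by expressing $\wick{\omega(t_1)\dotsm\omega(t_n)}$ via Proposition~\ref{ds5} as $I^{-1}$ applied to the $Q$-symmetric tensor product $\delta_{t_1}\cd\dotsm\cd\delta_{t_n}$ (interpreted, as in the statement, after smearing with test functions from $B_0(T^n\mapsto\mathbb C)$), and then to compute $\omega(t_1)\,\wick{\omega(t_2)\dotsm\omega(t_n)}$ by applying the operator $\omega(t_1)=\partial_{t_1}^\dag+\lambda\partial_{t_1}^\dag\partial_{t_1}+\partial_{t_1}$ to $I^{-1}(\delta_{t_2}\cd\dotsm\cd\delta_{t_n})$. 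Since $I$ intertwines the multiplication action on $\mathscr P$ with the action of the fundamental operators on $\fh$, it suffices to carry out the computation on the Fock space side: I would show that
\[
\omega(t_1)\,(\delta_{t_2}\cd\dotsm\cd\delta_{t_n})
=\delta_{t_1}\cd\delta_{t_2}\cd\dotsm\cd\delta_{t_n}
+(\text{lower-order terms}),
\]
where the lower-order terms are exactly the $\lambda$-term and the contraction term appearing on the right-hand side of \eqref{fdtyde6e}, and then apply $I^{-1}$ throughout. The key point is that $I^{-1}$ maps $\hc^{\cd k}$ isometrically onto $\mathscr{OP}^{(k)}$, so the decomposition of $\omega(t_1)\,(\delta_{t_2}\cd\dotsm\cd\delta_{t_n})$ into homogeneous Fock components is precisely the orthogonal (chaos) decomposition of $\omega(t_1)\,\wick{\omega(t_2)\dotsm\omega(t_n)}$ into its $\mathscr{OP}^{(k)}$-components.

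The three summands of $\omega(t_1)$ are handled separately. The creation part $\partial_{t_1}^\dag$ simply produces $\delta_{t_1}\cd\delta_{t_2}\cd\dotsm\cd\delta_{t_n}$, the top-degree term, which under $I^{-1}$ becomes $\wick{\omega(t_1)\dotsm\omega(t_n)}$. The neutral part $\lambda\partial_{t_1}^\dag\partial_{t_1}$, by the remark following Proposition~\ref{kds5u6r} (the formula for $\partial_t f_1\cd\dotsm\cd f_n$ applied to the $f_i=\delta_{t_i}$), produces $\lambda\sum_{i=2}^n\delta(t_1,t_i)$ times a $Q$-symmetric product of the $\delta_{t_j}$'s; because the factors $\delta_{t_j}$ at the collision point collapse, the surviving $Q$-weights $Q(t_1,\cdot)$ all evaluate against delta masses that coincide with $t_i=t_1$, hence equal $1$, and one is left with $\lambda\sum_{i=2}^n\delta(t_1,t_i)\,(\delta_{t_2}\cd\dotsm\cd\delta_{t_n})$, i.e. $-\lambda\sum_i\delta(t_1,t_i)\wick{\omega(t_2)\dotsm\omega(t_n)}$ after moving it to the other side. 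The annihilation part $\partial_{t_1}$ applied to $\delta_{t_2}\cd\dotsm\cd\delta_{t_n}$, again by the remark after Proposition~\ref{kds5u6r}, yields $\sum_{i=2}^n\delta_{t_i}(t_1)\,(Q(t_1,\cdot)\delta_{t_2})\cd\dotsm\cd(Q(t_1,\cdot)\delta_{t_{i-1}})\cd\delta_{t_{i+1}}\cd\dotsm\cd\delta_{t_n}$; here $\delta_{t_i}(t_1)=\delta(t_1,t_i)$ and each surviving weight $Q(t_1,\cdot)\delta_{t_j}$ equals $Q(t_1,t_j)\delta_{t_j}$, so this is exactly $\sum_{i=2}^n\delta(t_1,t_i)Q(t_1,t_2)\dotsm Q(t_1,t_{i-1})\,(\delta_{t_2}\cd\dotsm\check\delta_{t_i}\dotsm\cd\delta_{t_n})$, which under $I^{-1}$ is the last sum in \eqref{fdtyde6e}.

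Adding the three contributions and rearranging gives \eqref{fdtyde6e}; the base case $\wick{\omega(t)}=\omega(t)$ is immediate from Proposition~\ref{ds5} since $\delta_t\in\hc^{\cd1}=\hc$. The only genuinely delicate point is making the heuristic computation with point operators $\partial_{t_1}^\dag,\partial_{t_1}$ and delta functions $\delta_{t_i}$ rigorous: everything must be read as an identity of operator-valued distributions on $T^n$, i.e. tested against $g^{(n)}\in B_0(T^n\mapsto\mathbb C)$, with the $\delta(t_1,t_i)$ terms defined through the pairing $\int_{T^n}g^{(n)}\delta(t_1,t_i)=\int_{T^{n-1}}g^{(n)}|_{t_1=t_i}$ as already specified in the statement preceding \eqref{fdytde}. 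I expect the main obstacle to be bookkeeping the $Q$-factors correctly under the collisions $t_1=t_i$ and verifying that no extra $Q$-weights survive — here one uses that $Q(t_1,t_1)=1$ (the convention fixed in the proposition) together with the fact that in the neutral term the relevant $\delta_{t_j}$'s are untouched by the collapse while in the annihilation term only the factors with index $<i$ carry a $Q(t_1,\cdot)$, matching the product $Q(t_1,t_2)\dotsm Q(t_1,t_{i-1})$ in \eqref{fdtyde6e} — but this is the routine consequence of the already-proven formulas in Sections~\ref{hfhjfut}--\ref{utyr65e}.
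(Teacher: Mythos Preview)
Your overall strategy is the same as the paper's: apply the unitary $I$ from Proposition~\ref{ds5} and reduce \eqref{fdtyde6e} to the Fock-space identity
\[
f_1\cd\dotsm\cd f_n=\la f_1,\omega\ra\,(f_2\cd\dotsm\cd f_n)
-\lambda\sum_{i=2}^{n} f_2\cd\dotsm\cd(f_1f_i)\cd\dotsm\cd f_n
-\sum_{i=2}^{n} u_i^{(n-2)},
\]
which is then checked term by term from the definitions of $a^+,a^0,a^-$. The paper does exactly this, working with smeared test functions $f_1,\dots,f_n\in B_0(T)$ from the outset and invoking formula~\eqref{dd} for the neutral part and Proposition~\ref{ctst} for the annihilation part.

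There is, however, a genuine slip in your treatment of the neutral contribution. You compute $\partial_{t_1}^\dag\partial_{t_1}$ by first applying $\partial_{t_1}$ (which produces the factors $Q(t_1,t_2)\dotsm Q(t_1,t_{i-1})$ and deletes $\delta_{t_i}$) and then $\partial_{t_1}^\dag$ (which inserts $\delta_{t_1}$ in front). You then assert that these $Q$-weights ``evaluate against delta masses that coincide with $t_i=t_1$, hence equal $1$''. That is not correct: the arguments of those $Q$-factors are $t_2,\dots,t_{i-1}$, none of which is forced to equal $t_1$ by the collision $t_1=t_i$. The reason the answer nonetheless comes out right is different: using the creation commutation relation~\eqref{pr++} one has $\delta_{t_1}\cd\delta_{t_j}=Q(t_j,t_1)\,\delta_{t_j}\cd\delta_{t_1}$, so moving $\delta_{t_1}$ past $\delta_{t_2},\dots,\delta_{t_{i-1}}$ produces $\overline{Q(t_1,t_2)\dotsm Q(t_1,t_{i-1})}$, which cancels the prefactor since $|Q|=1$; only then does one recover $\delta_{t_2}\cd\dotsm\cd\delta_{t_n}$ (under $t_1=t_i$). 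Either insert this commutation argument, or---more cleanly, and as the paper does---bypass the issue entirely by using formula~\eqref{dd} for $a^0$, which gives $a^0(f_1)\,(f_2\cd\dotsm\cd f_n)=\sum_{i=2}^{n}f_2\cd\dotsm\cd(f_1f_i)\cd\dotsm\cd f_n$ directly with no $Q$-factors to track.
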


\begin{proof}
Since $\la f,\omega\ra\Omega=f$, we clearly have $\la f,\omega\ra=\la f,\wick{\omega}\ra$. Thus, we have to prove that, for each $n\ge 2$ and any
$f_1,\dots,f_n\in B_0(T)$,
\begin{align}
&\la f_1\otimes\dots\otimes f_n,\,\wick{\omega^{\otimes n}}\ra=\la f_1,\omega\ra \la f_2\otimes\dots\otimes f_n,\,\wick{\omega^{\otimes(n-1)}}\ra\notag\\
&\quad-\lambda\sum_{i=2}^n \la f_2\otimes\dots\otimes (f_1f_i)\otimes\dots\otimes f_n,\,\wick{\omega^{\otimes(n-1)}}\ra
-\sum_{i=2}^n\la u^{(n-2)}_i,\,\wick{\omega^{\otimes(n-2)}}\ra,\label{gfte}
\end{align}
where
\begin{multline*}
u^{(n-2)}_i(t_2,\dots,\check t_i,\dots,t_n)
=\int_{T}\sigma(dt_1)f_1(t_1)f_i(t_1)Q(t_1,t_2)Q(t_1,t_3)\dotsm Q(t_1,t_{i-1})\\
\times
f_2(t_2)\dotsm f_{i-1}(t_{i-1})f_{i+1}(t_{i+1})\dotsm f_n(t_n).\end{multline*}
By applying the unitary operator $I$ to the left and right hand sides of \eqref{gfte}, we see that equality \eqref{gfte} is equivalent to
$$ f_1\cd\dotsm\cd f_n=\la f_1,\omega\ra f_2\cd\dotsm \cd f_n-\lambda\sum_{i=2}^n f_2\cd\dotsm \cd(f_1f_i)\cd\dotsm\cd f_n
-\sum_{i=2}^n u_i^{(n-2)}.
$$ But the latter equality holds by virtue of the definition of the operator $\la f_1,\omega\ra$, see, in particular,  formula \eqref{dd}.
\end{proof}

\begin{remark}It follows from Proposition~\ref{fd5w} that, even  for $f_1,\dots,f_n\in B_0(T)$, the orthogonal polynomial
$\la f_1\otimes\dots\otimes f_n,\wick{\omega^{\otimes n}}\ra$ does not belong to $\mathscr P$, rather it is a polynomial of the form $\la f_1\otimes \dots\otimes f_n,\omega^{\otimes n}\ra+
\sum_{i=0}^{n-1}\la g^{(i)},\omega^{\otimes i}\ra$ with $g^{(i)}\in B_0(T^i\mapsto\mathbb C)$.
\end{remark}

Since $\omega(t)$ is represented through $\partial_t^\dag$ and $\partial_t$, it is natural to introduce a {\it $Q$-Wick ordering\/}: each product $\partial_s\partial_t^\dag$ must be replaced with $Q(s,t)\partial_t^\dag\partial_s$, until in each product of creation and annihilation operators, all  creation operators are to the left of all  annihilation operators. We will denote Wick ordering by $\Wick{\cdot}$.
In the boson case, it is well known that
\begin{equation}\label{dtrswe}\wick{\omega(t_1)\dotsm\omega(t_n)}=\Wick{\omega(t_1)\dotsm\omega(t_n)},\end{equation}
see e.g.\ \cite{IK}. So, it is important to know  whether this formula remains true for a general statistics.
In fact, a direct computation of the left and right hand sides of \eqref{dtrswe} for $n=3$ shows that the answer is always negative in the $Q$-Poisson case ($\lambda\ne0$) unless $Q\equiv 1$ (boson case), and is also negative in the $Q$-Gaussian case if $Q$ takes on non-real values (in particular, for anyons). The following result is worth comparing with \cite{BKS,Krolak}.

\begin{theorem}\label{bvfrs} If the function  $Q$ is real-valued, i.e., it takes values in $\{-1,1\}$, and if $\lambda=0$, i.e., $\omega(t)=\partial_t^\dag+\partial_t$
 \rom($Q$-Gaussian case\rom), then formula \eqref{dtrswe} holds.
\end{theorem}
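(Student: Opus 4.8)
The plan is to prove both identities in \eqref{dtrswe} by induction on $n$, showing that the recurrence satisfied by the orthogonal polynomials $\wick{\omega(t_1)\dotsm\omega(t_n)}$ (Proposition~\ref{fd5w}) coincides, under the stated hypotheses $\lambda=0$ and $Q$ real-valued, with the recurrence satisfied by the Wick-ordered products $\Wick{\omega(t_1)\dotsm\omega(t_n)}$. Since $\lambda=0$ we have $\omega(t)=\partial_t^\dag+\partial_t$, and Proposition~\ref{fd5w} simplifies: the term with the factor $\lambda$ vanishes, leaving
\begin{multline*}
\wick{\omega(t_1)\dotsm\omega(t_n)}=\omega(t_1)\wick{\omega(t_2)\dotsm\omega(t_n)}\\
-\sum_{i=2}^n\delta(t_1,t_i)Q(t_1,t_2)\dotsm Q(t_1,t_{i-1})\wick{\omega(t_2)\dotsm\check\omega(t_i)\dotsm\omega(t_n)}.
\end{multline*}
So the core of the argument is to establish the matching recurrence for $\Wick{\cdot}$.

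First I would unwind the definition of Wick ordering for the product $\omega(t_1)\Wick{\omega(t_2)\dotsm\omega(t_n)}$. Writing $\omega(t_1)=\partial_{t_1}^\dag+\partial_{t_1}$, the creation part $\partial_{t_1}^\dag$ is already in Wick-ordered position to the left, so it contributes $\partial_{t_1}^\dag\Wick{\omega(t_2)\dotsm\omega(t_n)}$ directly. The annihilation part $\partial_{t_1}$ must be commuted to the right past each creation operator appearing in $\Wick{\omega(t_2)\dotsm\omega(t_n)}$. Using the $Q$-commutation relation \eqref{fdytde}, each time $\partial_{t_1}$ passes $\partial_{t_i}^\dag$ it picks up a factor $Q(t_1,t_i)$ and produces a delta term $\delta(t_1,t_i)$ times the remaining Wick monomial with $\partial_{t_i}^\dag$ removed. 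Crucially, because $Q$ is real-valued (so $Q(s,t)=Q(t,s)$), the accumulated phase factors multiplying the delta term arising when $\partial_{t_1}$ hits the $i$-th creation operator are exactly $Q(t_1,t_2)Q(t_1,t_3)\dotsm Q(t_1,t_{i-1})$ — the same product appearing in Proposition~\ref{fd5w}. (If $Q$ took complex values, commuting the annihilation operator rightward would involve $Q$ while commuting the creation operators among themselves — implicitly already done inside $\Wick{\cdot}$ — involves $\bar Q$, and these would not cancel; this is precisely why the hypothesis that $Q$ is real is needed.) Collecting all terms, $\omega(t_1)\Wick{\omega(t_2)\dotsm\omega(t_n)}$ equals $\Wick{\omega(t_1)\dotsm\omega(t_n)}$ plus the same sum of delta-contracted lower-order Wick monomials that appears in the recurrence for $\wick{\cdot}$.

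Then I would close the induction: assuming $\wick{\omega(t_2)\dotsm\omega(t_n)}=\Wick{\omega(t_2)\dotsm\omega(t_n)}$ and, for the contracted sum, $\wick{\omega(t_2)\dotsm\check\omega(t_i)\dotsm\omega(t_n)}=\Wick{\omega(t_2)\dotsm\check\omega(t_i)\dotsm\omega(t_n)}$ for all $i$, the two recurrences have identical right-hand sides, hence $\wick{\omega(t_1)\dotsm\omega(t_n)}=\Wick{\omega(t_1)\dotsm\omega(t_n)}$. The base case $n=1$ is the trivial identity $\wick{\omega(t)}=\omega(t)=\Wick{\omega(t)}$. I expect the main obstacle to be the careful bookkeeping of the phase factors in the step where $\partial_{t_1}$ is commuted through the Wick-ordered string of creation operators: one must verify that when the annihilation operator contracts with the $i$-th creation operator, it has already passed creation operators labelled $t_2,\dots,t_{i-1}$ (and no others), producing exactly the phase $Q(t_1,t_2)\dotsm Q(t_1,t_{i-1})$, and that the realness of $Q$ is exactly what makes this identification valid and makes the phases from re-Wick-ordering the surviving string cancel out. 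This amounts to a clean induction within the induction, smeared against test functions $g^{(2)}\in B_0(T^2\mapsto\mathbb C)$ so that the $\delta(t_1,t_i)$ symbols make rigorous sense as in the preceding propositions.
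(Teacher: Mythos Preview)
Your approach is correct and is essentially the same as the paper's: both arguments establish that $\Wick{\omega(t_1)\dotsm\omega(t_n)}$ satisfies the recurrence of Proposition~\ref{fd5w} (with $\lambda=0$) by pushing $\partial_{t_1}$ through the creation operators via \eqref{fdytde}, and then conclude by induction. One point of caution: your phrase ``it has already passed creation operators labelled $t_2,\dots,t_{i-1}$'' is not quite accurate, since in a fixed term of the Wick expansion only \emph{some} of the positions $2,\dots,i-1$ carry creation operators; the missing $Q(t_1,t_j)$-factors for annihilation positions $j<i$ come instead from the pre-existing Wick-ordering phase $Q_{I,J}$ together with the identification $t_i=t_1$ under $\delta(t_1,t_i)$ and the symmetry of $Q$---this is exactly the computation the paper carries out explicitly, and it is the precise place where real-valuedness of $Q$ is used.
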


\begin{proof} Denote by ${\cal P}^{(n)}(2)$ the collection of all ordered partitions $(I,J)$ of the set $\{1, \ldots , n \} $ into two
disjoint subsets, $I$ and $J$.  For each $(I,J)\in {\cal P}^{(n)}(2)$, we denote
$$ a_{(I,J)}(t_m):=\begin{cases}
\partial_{t_m}^\dag,&\text{if }m\in I,\\
\partial_{t_m},&\text{if }m\in J.
\end{cases}$$
Then
\begin{align}
\Wick{\omega(t_1) \dotsm \omega(t_n)}&=
\Wick{(\partial^\dag_{t_1}+\partial_{t_1})(\partial^\dag_{t_2}+\partial_{t_2})\dotsm (\partial^\dag_{t_n}+\partial_{t_n})}\notag\\
&=\sum_{(I,J)\in\mathcal P^{(n)}(2)}\Wick{a_{(I,J)}(t_1)
a_{(I,J)}(t_2)\dotsm a_{(I,J)}(t_n)}\,.\label{uiyr76re}
\end{align}
If $(I,J)\in\mathcal P^{(n)}(2)$, $I=\{i_1, \ldots , i_k \}$,
$J=\{j_{k+1}, \ldots , j_{n} \}$, then applying the $Q$-Wick ordering, we  get
\begin{equation}\label{uds57f}
\Wick{a_{(I,J)}(t_1)
a_{(I,J)}(t_2)\dotsm a_{(I,J)}(t_n)}=\partial^\dag_{t_{i_1}}\partial^\dag_{t_{i_2}}\dotsm
\partial^\dag_{t_{i_k}}\partial_{t_{j_{k+1}}}\dotsm \partial_{t_{j_{n}}} Q_{I, J}(t_1,
\ldots , t_n),
\end{equation}
where
\[
Q_{I, J}(t_1, \ldots , t_n) := \prod_{\substack {k\in
I,\, m\in J\\  m<k }}   Q(t_m, t_k).
\]
(In formula \eqref{uds57f}, we assume that $i_1<i_2<\dots<i_k$ and $j_{k+1}<j_{k+2}<\dots<j_n$.)
Thus, by \eqref{uiyr76re} and \eqref{uds57f}, we have:
\begin{equation}\label{gfydtrds}
\Wick{\omega(t_1) \dotsm \omega(t_n)}= \sum_{\substack{
I, J\in {\cal P}^{(n)}(2)\\  I=\{i_1, \ldots , i_k \} \\
J=\{j_{k+1}, \ldots , j_{n} \} }}
\partial^\dag_{t_{i_1}}\partial^\dag_{t_{i_2}}\dotsm
\partial^\dag_{t_{i_k}}\partial_{t_{j_{k+1}}}\dotsm \partial_{t_{j_{n}}} Q_{I, J}(t_1,
\ldots , t_n).
\end{equation}

We have
$$
\omega(s) \Wick{\omega(t_1)\dotsm \omega(t_n)} = \Wick{\partial^\dag_s \omega(t_1)\dotsm\omega(t_n)} +\partial_s
\Wick{\omega(t_1)\dotsm \omega(t_n)}.$$
If $I=\varnothing$, , i.e., $J=\{1, \ldots , n \}$, then there are no
creation operators in the corresponding term on the right hand side of  \eqref{gfydtrds}. Hence
\[
\partial_s\partial_{t_1}\dotsm \partial_{t_n} =\Wick{ \partial_s\partial_{t_1}\dotsm \partial_{t_n}}.
\]
If $I\ne \varnothing$, then, using \eqref{fdytde},
\begin{align}
&\partial_s\partial^\dag_{t_{i_1}}\partial^\dag_{t_{i_2}}\dotsm \partial^\dag_{t_{i_k}}
\partial_{t_{j_{k+1}}} \dotsm \partial_{t_{j_n}} Q_{I, J}(t_1,\dots,t_n) \notag\\
&\quad=\big[\delta(s,t_{i_1})\partial^\dag_{t_{i_2}}\dotsm \partial^\dag_{t_{i_k}}+Q(s,t_{i_1})\partial^\dag_{t_{i_1}}\partial_s
\partial^\dag_{t_{i_2}}\dotsm\partial^\dag_{t_{i_k}}\big]\partial_{t_{j_{k+1}}} \dotsm \partial_{t_{j_n}} Q_{I, J}(t_1,\dots,t_n) \notag\\
&\quad =\dots=\big[
\delta(s,t_{i_1})\partial^\dag_{t_{i_2}}\dotsm \partial^\dag_{t_{i_k}}+\delta(s,t_{i_2})Q(s,t_{i_1})\partial^\dag_{t_{i_1}}\partial^\dag_{t_{i_3}} \dotsm\partial^\dag_{t_{i_k}}\notag\\
&\qquad+\dots+\delta(s,t_{i_k})Q(s,t_{i_1})Q(s,t_{i_2})\dotsm Q(s,t_{i_{k-1}})\partial^\dag_{t_{i_1}}\dotsm \partial^\dag_{t_{i_{k-1}}}\notag\\
&\qquad
+Q(s,t_{i_1})Q(s,t_{i_2})\dotsm Q(s,t_{i_k})\partial^\dag_{t_{i_1}}\dotsm \partial^\dag_{t_{i_k}}\partial_s
\big]\partial_{t_{j_{k+1}}} \dotsm \partial_{t_{j_n}} Q_{I, J}(t_1,\dots,t_n)\notag\\
&\quad=\big[
\delta(s,t_{i_1})\partial^\dag_{t_{i_2}}\dotsm \partial^\dag_{t_{i_k}}+\delta(s,t_{i_2})Q(s,t_{i_1})\partial^\dag_{t_{i_1}}\partial^\dag_{t_{i_3}} \dotsm\partial^\dag_{t_{i_k}}\notag\\
&\qquad+\dots+\delta(s,t_{i_k})Q(s,t_{i_1})Q(s,t_{i_2})\dotsm Q(s,t_{i_{k-1}})\partial^\dag_{t_{i_1}}\dotsm \partial^\dag_{t_{i_{k-1}}}\big]\partial_{t_{j_{k+1}}} \dotsm \partial_{t_{j_n}} Q_{I, J}(t_1,\dots,t_n)\notag\\
&\qquad +\Wick{\partial_s a(t_1)\dotsm a(t_n)}.
\notag
\end{align}
Here and below, having fixed a partition $(I,J)$, we 
write $a(t_m)$ instead of  $a_{(I,J)}(t_m)$. 
We clearly have:
$$Q_{I, J}(t_1,\dots,t_n) = Q(t_1, t_{i_1})Q(t_2, t_{i_1})\dotsm Q(t_{i_1-1},
t_{i_1}) Q_{I\setminus \{i_1\}, J}(t_1, \ldots , t_{i_1-1},
t_{i_1+1}, \ldots , t_n).$$
Since the function $Q$ is real-valued, it is therefore symmetric. Hence,
\begin{align*}
&\delta(s,t_{i_1})\partial^\dag_{t_{i_2}}\dotsm \partial^\dag_{t_{i_k}}\partial_{t_{j_{k+1}}}\dotsm \partial_{t_{j_n}}Q_{I, J}(t_1,\dots,t_n)
 = \delta(s, t_{i_1}) Q(t_1, t_{i_1})\dotsm Q(t_{i_1-1}, t_{i_1})\\
 &\quad\times
\partial^\dag_{t_{i_2}}\dotsm \partial^\dag_{t_{i_{k}}}\partial_{t_{j_{k+1}}} \dotsm
\partial_{t_{j_n}} Q_{I\setminus \{i_1 \}, J}(t_1, \ldots , t_{i_1-1},
t_{i_1+1}, \ldots , t_n)\\
&\quad=\delta(s, t_{i_1})
Q(s,t_1)Q(s,t_2)\dotsm Q(s,t_{i_1-1}) \,\Wick{a(t_{1})\dotsm
 a(t_{i_{1}-1})a(t_{i_1+1})\dotsm a(t_n)}.
\end{align*}
Continuing by analogy, we therefore conclude that
\begin{align*}
&\partial_s \Wick{a(t_1)\dotsm a(t_n)}\\
&\quad=\sum_{l=1}^k\delta(s,t_{i_l})Q(s,t_1)Q(s,t_2)\dotsm Q(s,t_{i_l-1})\,\Wick{a(t_{1})\dotsm
 a(t_{i_{l}-1})a(t_{i_l+1})\dotsm a(t_n)}\\
 &\qquad+\Wick{\partial_s a(t_1)\dotsm a(t_n)}.
\end{align*}
Hence,
\begin{multline*} \omega(s)\Wick{\omega(t_1)\dotsm\omega(t_n)}\\=\Wick{\omega(s)\omega(t_1)\dotsm\omega(t_n)}+\sum_{l=1}^n\delta(s,t_l)\Wick
{\omega(t_1)\dotsm\omega(t_{l-1})\omega(t_{l+1})\dotsm\omega(t_n)},\end{multline*}
from where the statement follows.
\end{proof}

From now on, we will again treat the case of a general  function $Q$. Our next aim is to derive a representation of a monomial $\la f^{(n)},\omega^{\otimes n}\ra$ through orthogonal polynomials. We will first fix some notations.

Analogously to the symbol $\delta(s,t)$, we introduce a symbol $\delta(t_1,\dots,t_k)$ with $k\ge2$, which is understood as
$$\int_{T^k}\sigma(dt_1)\dotsm\sigma(dt_k)\,f^{(k)}(t_1,\dots,t_k)\delta(t_1,\dots,t_k):=\int_T\sigma(dt)\,f^{(k)}(t,\dots,t).$$
Let $\mathcal P^{(n)}_\pm$ denote the collection of all  partitions $\mathcal V$ of the set $\{1,2,\dots,n\}$ whose blocks are marked by $+1$ or $-1$ and such that, if a block has only one element (a singleton), then  the mark of this block is $+1$. For each marked partition $\mathcal V\in\mathcal P^{(n)}_\pm$, the expression $\wick{\omega(t_1)\dotsm\omega(t_n)}_{\mathcal V}$ will mean the following. Take $\wick{\omega(t_1)\dotsm\omega(t_n)}$\,. For each $B\in\mathcal V$ with mark $+1$ do the following: if $B$ is a singleton, then do nothing, and if $B=\{i_1,i_2,\dots,i_k\}$ with $k\ge2$ and $i_1<i_2<\dots<i_k$, then remove $\omega(t_{i_1}),\omega(t_{i_2}),\dots,\omega(t_{i_{k-1}})$ and multiply the result by $\lambda^{k-1}\delta(t_{i_1},t_{i_2},\dots,t_{i_{k}})$. For each $B=\{i_1,i_2,\dots,i_k\}\in\mathcal V$ with mark $-1$ (and hence $k\ge2$) do the following:
remove  $\omega(t_{i_1}),\omega(t_{i_2}),\dots,\omega(t_{i_{k}})$ and multiply the result by $\lambda^{k-2}\delta(t_{i_1},t_{i_2},\dots,t_{i_{k}})$. 

\begin{example} Consider the following marked partition of $\{1,2,\dots,6\}$:
\begin{equation}\label{hgyurf75er} \mathcal V=\{\big(\{1,6\},+1),\, (\{2,3,5\},-1),\, (\{4\},+1)\big\}.\end{equation}
Then
\begin{equation}\label{ftytde6e}\wick{\omega(t_1)\dotsm\omega(t_6)}_{\mathcal V}=\lambda^2\delta(t_1,t_6)\delta(t_2,t_3,t_5)\wick{\omega(t_4)\omega(t_6)}, \end{equation}
or in the smeared (integral) form
\begin{equation}\label{fty6w64u} \la f_1\otimes\dotsm\otimes f_6, \wick{\omega^{\otimes 6}}_{\mathcal V}\ra=\lambda^2\int_T (f_2f_3f_5)(t)\,\sigma(dt)\, \la f_4\otimes(f_1f_6),\wick{\omega^{\otimes 2}}\ra.
\end{equation}
\end{example}

We will also use the following notation: for ${\cal V}\in {\cal
P}^{(n)}_{\pm}$
\begin{equation}\label{hdyrjeu}
Q({\cal V}; t_1, \ldots , t_n) :=  \hspace{-1cm}
\prod_{\substack { B_1, B_2\in {\cal V}\\ m(B_1)=m(B_2)=-1,\\
\min B_1 < \min B_2 < \max B_1 < \max
B_2}}\hspace{-1.5cm} Q(t_{\min B_2}, t_{\max B_1}) \times
\hspace{-0.6cm}
\prod_{\substack { B_1, B_2\in {\cal V}\\ m(B_1)=+1,\, m(B_2)=-1,\\
\min B_2 < \max B_1 < \max B_2}} \hspace{-1cm} Q(t_{\min
B_2}, t_{\max B_1}).
\end{equation} Here, for a block $B$ from a marked partition  $\mathcal V\in\mathcal P^{(n)}_\pm$, $m(B)$ denotes the mark of $B$, while  $\min B$ ($\max B$, respectively) is the minimal (maximal, respectively) element of the block $B$.

\begin{theorem}[Wick rule for a product of fields]\label{eq243q}
 For each $n\in\mathbb N$, we have
\begin{equation}\label{hdtrsw}\omega(t_1)\dotsm\omega(t_n)=\sum_{\mathcal V\in\mathcal P^{(n)}_\pm}Q({\cal V}; t_1, \ldots , t_n)\wick{\omega(t_1)\dotsm\omega(t_n)}_{\mathcal V},\end{equation}
the formula making rigorous sense after smearing out with a function $f^{(n)}\in B_0(T^n\mapsto\mathbb C)$.
\end{theorem}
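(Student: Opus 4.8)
The plan is to prove \eqref{hdtrsw} by induction on $n$, splitting off the leftmost factor $\omega(t_1)$ and feeding the outcome into the recurrence of Proposition~\ref{fd5w}. The base case $n=1$ is immediate: $\mathcal P^{(1)}_\pm$ consists of the single marked partition whose only block is the $+1$-singleton $\{1\}$, and for it $Q(\mathcal V;t_1)=1$ while $\wick{\omega(t_1)}_{\mathcal V}=\omega(t_1)$.

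For the inductive step, assuming \eqref{hdtrsw} for $n$, I would apply it to the variables $t_2,\dots,t_{n+1}$ to expand $\omega(t_2)\dotsm\omega(t_{n+1})$ as a sum over marked partitions $\mathcal V'$ of $\{2,\dots,n+1\}$. Each summand has the form $Q(\mathcal V';t_2,\dots,t_{n+1})\,D(\mathcal V')\,\wick{\omega(t_{j_1})\dotsm\omega(t_{j_m})}$, where $D(\mathcal V')$ collects the powers of $\lambda$ and the $\delta$-symbols attached to the blocks of $\mathcal V'$, and $j_1<j_2<\dots<j_m$ are precisely the maxima of the $+1$-blocks of $\mathcal V'$. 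Since $D(\mathcal V')$ and $Q(\mathcal V';\cdot)$ involve only $t_2,\dots,t_{n+1}$, left multiplication by $\omega(t_1)$ reduces to inserting the identity
\begin{align*}
\omega(t_1)\wick{\omega(t_{j_1})\dotsm\omega(t_{j_m})}
&=\wick{\omega(t_1)\omega(t_{j_1})\dotsm\omega(t_{j_m})}
+\lambda\sum_{l=1}^{m}\delta(t_1,t_{j_l})\wick{\omega(t_{j_1})\dotsm\omega(t_{j_m})}\\
&\quad+\sum_{l=1}^{m}\delta(t_1,t_{j_l})\,Q(t_1,t_{j_1})\dotsm Q(t_1,t_{j_{l-1}})\,
\wick{\omega(t_{j_1})\dotsm\check\omega(t_{j_l})\dotsm\omega(t_{j_m})},
\end{align*}
which is Proposition~\ref{fd5w} applied to the tuple $(t_{j_1},\dots,t_{j_m})$ in place of $(t_2,\dots,t_n)$. (Everything is read after smearing with test functions; I would also record there the elementary fact that $\delta(t_1,t_{j_l})\,\delta(t_{i_1},\dots,t_{i_k})$ with $j_l\in\{i_1,\dots,i_k\}$ coincides, as a smeared distribution, with $\delta(t_1,t_{i_1},\dots,t_{i_k})$.)

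The heart of the argument is a bijective bookkeeping: I would match the three groups of terms above with the marked partitions $\mathcal V\in\mathcal P^{(n+1)}_\pm$ whose block $B\ni1$ is, respectively, the $+1$-singleton $\{1\}$; a $+1$-block of size $\ge2$; a $-1$-block. In the first case $\mathcal V=\mathcal V'\cup\{(\{1\},+1)\}$, and one checks at once that $Q(\mathcal V;t_1,\dots,t_{n+1})=Q(\mathcal V';t_2,\dots,t_{n+1})$ (the singleton $\{1\}$ cannot trigger any factor in \eqref{hdyrjeu}) and $\wick{\omega(t_1)\dotsm\omega(t_{n+1})}_{\mathcal V}=D(\mathcal V')\wick{\omega(t_1)\omega(t_{j_1})\dotsm\omega(t_{j_m})}$. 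In the second case $\mathcal V$ is obtained from $\mathcal V'$ by inserting $1$ into the $+1$-block with maximum $j_l$, keeping the mark $+1$; here the new $\delta$-$\lambda$ data equals $\lambda\,\delta(t_1,t_{j_l})\,D(\mathcal V')$, the surviving Wick word is unchanged, and $Q(\mathcal V;\cdot)=Q(\mathcal V';\cdot)$ because a $+1$-block enters \eqref{hdyrjeu} only through its maximum, which has not moved. In the third case $\mathcal V$ is obtained from $\mathcal V'$ by inserting $1$ into that same block but flipping its mark to $-1$: then the surviving Wick word loses $\omega(t_{j_l})$, the $\delta$-$\lambda$ data reorganizes as before, and — the delicate point — the ratio $Q(\mathcal V;t_1,\dots,t_{n+1})/Q(\mathcal V';t_2,\dots,t_{n+1})$ must come out to be exactly $Q(t_1,t_{j_1})\dotsm Q(t_1,t_{j_{l-1}})$.

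I expect this last identity to be the main obstacle, and I would establish it by a careful comparison of the two products defining $Q(\mathcal V;\cdot)$ in \eqref{hdyrjeu}: when the block $B$, with $\max B=j_l$, is re-marked from $+1$ to $-1$ and absorbs the index $1$, the $(-1,-1)$-pair factors it newly creates, namely $Q(t_{\min B_2},t_{j_l})$ over the $-1$-blocks $B_2$ with $\min B_2<j_l<\max B_2$, cancel exactly the $(+1,-1)$-pair factors $Q(t_{\min B_2},t_{j_l})$ it used to contribute as a $+1$-block; what survives are the new $(+1,-1)$-pair factors $Q(t_{\min B},t_{\max B_1^+})=Q(t_1,t_{\max B_1^+})$ over the $+1$-blocks $B_1^+$ of $\mathcal V'$ with $\max B_1^+<j_l$, i.e.\ precisely $Q(t_1,t_{j_1})\dotsm Q(t_1,t_{j_{l-1}})$. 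Finally, observing that $\mathcal V\mapsto(\mathcal V',\text{case})$ is a bijection from $\mathcal P^{(n+1)}_\pm$ onto the set of data consisting of a marked partition $\mathcal V'$ of $\{2,\dots,n+1\}$ together with either ``case $1$'' or ``(case $2$, $l$)'' or ``(case $3$, $l$)'' for some $l\in\{1,\dots,m\}$, and reassembling the factor $Q(\mathcal V';t_2,\dots,t_{n+1})$ in front, yields the right-hand side of \eqref{hdtrsw} for $n+1$, completing the induction.
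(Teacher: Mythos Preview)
Your proof is correct and follows essentially the same route as the paper: induction on $n$, splitting off $\omega(t_1)$ via Proposition~\ref{fd5w}, and matching the three groups of resulting terms with the three ways of attaching the index $1$ to a marked partition of $\{2,\dots,n+1\}$ (new $+1$-singleton; join a $+1$-block and keep the mark; join a $+1$-block and flip to $-1$). Your explicit verification of the $Q$-factor identity in the third case is more detailed than the paper's, which simply asserts the final equality without spelling out the cancellation between the $(-1,-1)$ and $(+1,-1)$ contributions in~\eqref{hdyrjeu}.
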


\setcounter{theorem}{5}
\begin{example}[continued] Let again a marked partition $\mathcal V\in \mathcal P^{(6)}_\pm$ be given by \eqref{hgyurf75er}. Then, by \eqref{hdyrjeu}, $Q(\mathcal V;t_1,\dots,t_6)=Q(t_2,t_4)$. Hence, by \eqref{ftytde6e},
$$ Q({\cal V}; t_1, \ldots , t_6)\wick{\omega(t_1)\dotsm\omega(t_6)}_{\mathcal V}= Q(t_2,t_4)\lambda^2\delta(t_1,t_6)\delta(t_2,t_3,t_5)\wick{\omega(t_4)\omega(t_6)}\,.$$
Fix any test functions $f_1,\dots,f_6$. Then, in the decomposition of $\la f_1,\omega\ra\dotsm\la f_6,\omega\ra$ according to the Wick rule, the term corresponding to the marked partition $\mathcal V$ has the form
\begin{equation}\label{iufd64w6u45}\lambda^2\Big\la f_4\otimes \Big(f_1f_6\cdot\int_T\sigma(dt)
(f_2f_3f_5)(t)Q(t,\cdot)\Big),\wick{\omega^{\otimes 2}}\Big\ra\end{equation}
(compare with \eqref{fty6w64u}, which is the special case of \eqref{iufd64w6u45} when $Q\equiv1$.)
Formula \eqref{iufd64w6u45} illustrates the difference between blocks having mark $+1$ and  blocks having mark $-1$. Indeed, in the marked partition  \eqref{hgyurf75er}, the block $\{2,3,5\}$ has mark $-1$, and so the function $(f_2f_3f_5)(t)$ times $Q(t,\cdot)$ is integrated against the measure $\sigma(dt)$. On the other hand, the blocks $\{4\}$ 
and $\{1,6\}$  have mark $=+1$, and so both functions $f_4$ and $f_1f_6$ appearing in \eqref{iufd64w6u45} are not integrated against $\sigma$.
\end{example}

\setcounter{theorem}{7}

\begin{proof}[Proof of Theorem~\ref{eq243q}] We prove formula \eqref{hdtrsw} by induction. It trivially holds for $n=1$. Assume that \eqref{hdtrsw} holds for $n$. Fix any $\mathcal V\in\mathcal P_\pm^{(n)}$, which we will treat as the corresponding collection of marked partitions of the set $\{2,3,\dots,n+1\}$. Denote by  $B_1,B_2,\dots,B_k$  the blocks of $\mathcal V$ which have mark $+1$. Let $i_j:=\max B_j$, $j=1,\dots,k$, and assume that $i_1<i_2<\dots<i_k$.
By Proposition~\ref{fd5w}, 
\begin{align*}
&\omega(t_1)\wick{\omega(t_{i_1})\dotsm\omega(t_{i_k})}=
\wick{\omega(t_1)\omega(t_{i_1})\dotsm\omega(t_{i_k})}+\sum_{j=1}^k\lambda\delta(t_1,t_{i_j})\wick{\omega(t_{i_1})\dotsm\omega(t_{i_k})}\\
&\quad+\sum_{j=1}^k\delta(t_1,t_{i_j})Q(t_1,t_{i_1})Q(t_1,t_{i_2})\dotsm Q(t_1,t_{i_{j-1}})\wick{\omega(t_{i_1})\dotsm
\check\omega(t_{i_j})\dotsm\omega(t_{i_k})}\,.
\end{align*}
Hence, 
\begin{align*}
&\omega(t_1)\,Q({\cal V}; t_2, \ldots , t_{n+1})\wick{\omega(t_2)\dotsm\omega(t_{n+1})}_{\mathcal V}\\
&\quad
=Q({\cal V}; t_2, \ldots , t_{n+1}) \bigg[\wick{\omega(t_1)\omega(t_2)\dotsm\omega(t_{n+1})}_{\mathcal V^{(1)}}
+\sum_{j=1}^{k}\lambda\delta(t_1,t_{i_j})\wick{\omega(t_2)\dotsm\omega(t_{n+1})}_{\mathcal V}\\
&\qquad + \sum_{j=1}^k \delta(t_1,t_{i_j}) Q(t_1,t_{i_1})Q(t_1,t_{i_2})\dotsm Q(t_1,i_{j-1})\big(
\wick{\omega(t_2)\dotsm\omega(t_{n+1})}_{\mathcal V}
\big)_{i_j}^\vee\bigg].
\end{align*}
Here $\mathcal V^{(1)}$ denotes the element of $\mathcal P^{(n+1)}_\pm$ which is obtained from $\mathcal V$ by adding the singleton $\{1\}$, marked $+1$, and $\big(
\wick{\omega(t_2)\dotsm\omega(t_{n+1})}_{\mathcal V}
\big)_{i_j}^\vee$ is obtained from $\wick{\omega(t_2)\dotsm\omega(t_{n+1})}_{\mathcal V}$ by removing $\omega(t_{i_j})$.
Therefore,
\begin{align*}
&\omega(t_1)\,Q({\cal V}; t_2, \ldots , t_{n+1})\wick{\omega(t_2)\dotsm\omega(t_{n+1})}_{\mathcal V}\\
&\quad
=Q(\mathcal V^{(1)}; t_1,\dots , t_{n+1}) \wick{\omega(t_1)\omega(t_2)\dotsm\omega(t_{n+1})}_{\mathcal V^{(1)}}\\
&\qquad
+\sum_{j=1}^k\sum_{l=2}^3 Q(\mathcal V^{(l)}_j; t_1,\dots , t_{n+1}) \wick{\omega(t_1)\omega(t_2)\dotsm\omega(t_{n+1})}_{\mathcal V^{(l)}_j},
\end{align*}
where $\mathcal V^{(l)}_j$ denotes the element of $\mathcal P^{(n+1)}_\pm$ which is obtained from $\mathcal V$ by adding
1 to the block containing $i_j$ and leaving the mark of this block to be $+1$ if $l=2$, respectively changing the mark of this block to $-1$ if $l=3$. From here formula \eqref{hdtrsw} for $n+1$ immediately follows.
\end{proof}

By applying the vacuum state $\tau$ to the left and right hand sides of \eqref{hdtrsw}, we get

\begin{corollary}[Moments formula]\label{fyde} For any $f^{(n)}\in B_0(T^n\mapsto\mathbb C)$, we
have
\begin{equation}\label{lgyuti}
\tau(\la f^{(n)},\omega^{\otimes n}\ra)=\sum_{\mathcal V\in \mathcal P^{(n)}_{\ge2}}\int_{T^n}f^{(n)}(t_1,\dots,t_n)
Q(\mathcal V;t_1,\dots,t_n)\prod_{B\in\mathcal V}\lambda^{|B|-2}\, \delta(dt_B).
\end{equation}
Here $\mathcal P_{\ge2}^{(n)}$ denotes the collection of all partitions $\mathcal V$ of $\{1,\dots,n\}$ such that each block
$B\in\mathcal V$ has at least two elements, i.e., $|B|\ge2$. For any subset $B=\{i_1,i_2,\dots,i_k\}$ of $\{1,\dots,n\}$
\rom($k\ge2$\rom), $\delta(dt_B):=\delta(dt_{i_1}\times dt_{i_2}\times\dots\times dt_{i_k})$, where
$$\int_{T^k} g^{(k)}(s_1,\dots,s_k)\,\delta(ds_1\times\dots\times ds_k):=\int_{T^k}g^{(k)}(s,s,\dots,s)\,\sigma(ds).$$
\rom(Note that $\delta(ds_1\times\dots\times ds_k)$ is a measure on $(T^k,\mathcal B(T^k))$.\rom)
Furthermore,
\begin{equation}\label{cdtdtrsd} Q(\mathcal V;t_1,\dots,t_n):=\hspace{-1cm} \prod_{\substack
{ B_1, B_2 \in {\cal V},\\ \min B_1 < \min B_2 <
\max B_1 < \max B_2 }}\hspace{-1cm} Q(t_{\min B_2},
t_{\max B_1}).\end{equation}
\end{corollary}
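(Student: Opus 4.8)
The plan is to obtain the moments formula as an immediate specialization of Theorem~\ref{eq243q}, by applying the vacuum state $\tau$ to both sides of the Wick product expansion \eqref{hdtrsw}. First I would recall that $\tau(p)=(p\Omega,\Omega)_{\fh}$, and that for an orthogonal polynomial we have $\la f^{(m)},\wick{\omega^{\otimes m}}\ra=I^{-1}P_mf^{(m)}$, which lies in $\mathscr{OP}^{(m)}$. Consequently, $\tau$ annihilates every orthogonal polynomial of positive degree, i.e.\ $\tau(\la f^{(m)},\wick{\omega^{\otimes m}}\ra)=0$ for $m\ge 1$, since $\mathscr{OP}^{(m)}\perp\mathscr{OP}^{(0)}=\mathscr{MP}^{(0)}$ and $\tau$ evaluates the $\mathscr{OP}^{(0)}$-component. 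Only the term $m=0$, the scalar part, survives.

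The key combinatorial step is then to identify which marked partitions $\mathcal V\in\mathcal P^{(n)}_\pm$ contribute a nonzero scalar after this projection. Looking at the construction of $\wick{\omega(t_1)\dotsm\omega(t_n)}_{\mathcal V}$: each block marked $+1$ of size $k$ removes $k-1$ of the $\omega$'s (keeping one), each block marked $-1$ of size $k$ removes all $k$ of them. The surviving orthogonal polynomial $\wick{\,\cdot\,}_{\mathcal V}$ has degree equal to the number of $+1$-blocks. For this to be degree zero — hence to survive under $\tau$ — we need $\mathcal V$ to have \emph{no} $+1$-blocks at all; since singletons are forced to carry the mark $+1$, this rules out singletons, so every block must have mark $-1$ and size $\ge 2$. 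Thus the surviving $\mathcal V$ are exactly parametrized by $\mathcal P^{(n)}_{\ge 2}$, with all marks equal to $-1$. For such a partition with blocks $B$, $|B|=k$, the scalar factor produced is $\lambda^{k-2}\delta(t_B)$ (the $-1$-block rule gives $\lambda^{k-2}$ and a full diagonal delta on the $k$ coordinates of $B$), so the total scalar is $\prod_{B\in\mathcal V}\lambda^{|B|-2}\delta(dt_B)$. The prefactor $Q(\mathcal V;t_1,\dots,t_n)$ from \eqref{hdyrjeu} simplifies: the second product (pairs with one block $+1$ and one $-1$) disappears since there are no $+1$-blocks, and the first product reduces to the interlacing pattern $\min B_1<\min B_2<\max B_1<\max B_2$ over pairs of ($-1$)-blocks, which is exactly \eqref{cdtdtrsd}.

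Assembling these observations, applying $\tau$ to \eqref{hdtrsw} smeared against $f^{(n)}$ and using linearity and continuity of $\tau$ on $\mathscr L(\mathcal F^Q_{\mathrm{fin}}(\mathcal H))$ (the series is finite), yields \eqref{lgyuti}. I expect the only genuinely delicate point to be bookkeeping: one must check carefully that when $\mathcal V$ has no $+1$-block the operator $\wick{\omega(t_1)\dotsm\omega(t_n)}_{\mathcal V}$ is literally the empty Wick product, i.e.\ the identity operator (the scalar $1$ before multiplication by the delta-functions), so that $\tau$ of it equals $1$; and symmetrically that whenever at least one $+1$-block is present the leftover Wick polynomial has strictly positive degree and is therefore killed by $\tau$. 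This requires unwinding the recursive definition of $\wick{\,\cdot\,}_{\mathcal V}$ from just before the Example, together with Proposition~\ref{ds5} identifying $\wick{\omega^{\otimes m}}$ with $\mathscr{OP}^{(m)}$; it is routine but is where all the care is needed. The matching of the $Q$-prefactors \eqref{hdyrjeu} versus \eqref{cdtdtrsd} is then a direct substitution, and the powers of $\lambda$ follow by collecting one factor $\lambda^{|B|-2}$ per block.
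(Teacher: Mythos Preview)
Your proposal is correct and takes exactly the same approach as the paper: the paper's entire proof is the single sentence ``By applying the vacuum state $\tau$ to the left and right hand sides of \eqref{hdtrsw}, we get [the corollary].'' Your write-up simply unpacks what that sentence means, and the bookkeeping you flag (only marked partitions with all blocks carrying mark $-1$ survive, giving precisely $\mathcal P^{(n)}_{\ge 2}$; the $Q$-prefactor \eqref{hdyrjeu} collapses to \eqref{cdtdtrsd}; each block contributes $\lambda^{|B|-2}\delta(dt_B)$) is exactly right.
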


The reader is advised to compare the following corollary with \cite[Theorem~4.4]{BS_1994}, which deals with a Gaussian process for discrete commutation relations \eqref{iftru8rf}, and with \cite[Lemma~7.5]{A2}, which deals with a Poisson process for the $q$-deformed commutation relations \eqref{dtrs54t}.  Recall that we denoted by $\mathscr P$ the complex unital $*$-algebra generated by $(\la f,\omega\ra)_{f\in B_0(T)}$, and the state $\tau$ on $\mathscr P$ is given by \eqref{ty6w65uw6}.

\begin{corollary}
The state $\tau$ on $\mathscr P$ is tracial, i.e., it satisfies $\tau(p_1p_2)=\tau(p_2p_1)$ for all $p_1,p_2\in\mathscr P$, if and only if

\begin{itemize}
\item $Q\equiv 1$ and $\lambda\ne0$; or

\item the function $Q$ is real-valued, i.e., it takes values in $\{-1,1\}$, and $\lambda=0$.
\end{itemize}

\end{corollary}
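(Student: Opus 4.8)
The plan is to characterize traciality of $\tau$ by testing it on a sufficiently rich family of monomials, using the Moments Formula (Corollary~\ref{fyde}) as the computational engine. Since $\tau(p_1 p_2) = \tau(p_2 p_1)$ must hold for all $p_1, p_2 \in \mathscr P$, it suffices to check it on products $\la f_1,\omega\ra\dotsm\la f_n,\omega\ra$ and their cyclic rearrangements; equivalently, I will compare $\tau(\la f^{(n)},\omega^{\otimes n}\ra)$ for $f^{(n)}$ and for the ``cyclically shifted'' kernel $f^{(n)}(t_2,\dots,t_n,t_1)$. By \eqref{lgyuti}, both sides are sums over pair-or-larger partitions $\mathcal V\in\mathcal P^{(n)}_{\ge2}$ of integrals against the diagonal measures $\delta(dt_B)$, weighted by $\prod_B \lambda^{|B|-2}$ and by the phase $Q(\mathcal V;t_1,\dots,t_n)$ from \eqref{cdtdtrsd}. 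A cyclic shift of the arguments permutes the partitions $\mathcal V$ (sending a block $B$ to $B-1$ modulo $n$, with the caveat that $1\mapsto n$ can split a block's order structure), so the whole question reduces to whether the phase factors $Q(\mathcal V;\cdot)$ and the $\lambda$-powers are invariant under this combinatorial operation.

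First I would handle the two ``sufficiency'' directions quickly. If $Q\equiv 1$, then $Q(\mathcal V;\cdot)\equiv 1$ for every $\mathcal V$, so the only obstruction to invariance under cyclic shift could come from the $\lambda^{|B|-2}$ weights or from the constraint that singletons are forbidden in $\mathcal P^{(n)}_{\ge2}$ — but $|B|$ is preserved under the shift, so traciality holds; note $\lambda\ne 0$ is needed only so that the algebra $\mathscr P$ is genuinely ``Poisson'' and the statement is non-vacuous (for $\lambda=0$, $Q\equiv1$ this is the classical Gaussian case, which is of course also tracial — so in fact I should double-check whether the intended statement silently includes $\lambda=0,Q\equiv1$ in the first bullet, or treats it as a degenerate subcase of the second). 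If $Q$ is real-valued and $\lambda=0$, then only pair partitions $\mathcal V$ contribute (blocks of size $\ge 3$ get weight $\lambda^{|B|-2}=0$), and on pair partitions $Q(\mathcal V;\cdot)$ is a product of values $Q(t_{\min B_2},t_{\max B_1})\in\{-1,+1\}$; here I would invoke (or re-derive in this special case) the known fact that the $q=\pm1$ Gaussian/mixed bose-fermi state is tracial — concretely, a cyclic shift changes the set of ``crossing'' pairs in a controlled way, and because $Q$ is symmetric and $\pm1$-valued the net sign change is $+1$. This is essentially the computation behind Theorem~\ref{bvfrs} and \cite[Theorem~4.4]{BS_1994}.

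The necessity direction is the main obstacle, and I would attack it by exhibiting explicit small-$n$ counterexamples whenever we are outside the two listed cases. If $\lambda=0$ but $Q$ takes a non-real value $q=Q(s,t)$ on some pair of ``cells'' $\Delta_1<\Delta_2$ of positive measure (which exists by the anyon-type hypotheses, cf.\ Proposition~\ref{hdytd}), I would compute $\tau$ on a degree-$4$ monomial built from indicator functions of $\Delta_1,\Delta_2$ arranged so that exactly one crossing pair partition contributes with phase $q$, versus its cyclic shift which contributes with phase $\bar q$; since $q\ne\bar q$ these differ. If $\lambda\ne 0$ and $Q\not\equiv 1$, pick cells where $Q$ takes a value $q\ne 1$ (possibly real, possibly not) and use a degree-$3$ monomial: by \eqref{lgyuti} with $n=3$ the only partition in $\mathcal P^{(3)}_{\ge2}$ is the full block $\{1,2,3\}$, contributing $\lambda\int f^{(3)}(t,t,t)\,\sigma(dt)$ with no $Q$-phase — so traciality at $n=3$ is automatic and I must go to $n=4$, where partitions into two pairs can cross and pick up a single factor $Q(t_{\min B_2},t_{\max B_1})$, and a suitable choice of supports makes $\tau(\la f_1,\omega\ra\la f_2,\omega\ra\la f_3,\omega\ra\la f_4,\omega\ra)\ne\tau(\la f_2,\omega\ra\la f_3,\omega\ra\la f_4,\omega\ra\la f_1,\omega\ra)$ unless $q=1$. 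The delicate points will be: (i) choosing the indicator-function supports and their order relations so that precisely one ``offending'' partition survives on each side, which requires care because many partitions contribute simultaneously; and (ii) bookkeeping the effect of the cyclic shift on $Q(\mathcal V;\cdot)$ when the index $1$ (which has the smallest position) is moved to the last position, since this is exactly where the asymmetry between $\min$ and $\max$ in \eqref{cdtdtrsd} bites. I expect (ii) to be the genuinely fiddly part; once the combinatorics of ``how a crossing is created or destroyed by the shift $1\mapsto n$'' is pinned down, the counterexamples and the positive cases both fall out of comparing phase products.
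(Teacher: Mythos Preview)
Your overall strategy---test traciality on cyclic shifts of monomials and compute via the moment formula~\eqref{lgyuti}---is exactly what the paper does, and your treatment of the Gaussian case ($\lambda=0$) matches: the paper also uses a degree-$4$ monomial with alternating supports $\Delta_1,\Delta_2,\Delta_1,\Delta_2$ to force $Q$ real, and then proves traciality for real $Q$ by an explicit cyclic-shift computation on pair partitions (essentially your point~(ii)).

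There is, however, a genuine gap in your Poisson necessity argument. At $n=4$ with $\lambda\ne0$ and disjoint supports, the only surviving partition is the crossing pair partition $\{\{1,3\},\{2,4\}\}$ (the full block $\{1,2,3,4\}$ dies on disjoint supports), and your comparison yields $\int_{\Delta_2\times\Delta_1}Q(s,t)$ versus its conjugate $\int_{\Delta_1\times\Delta_2}Q(s,t)$. This forces $Q$ to be \emph{real-valued}, but it does \emph{not} force $Q\equiv1$: if $Q$ takes the value $-1$ somewhere (mixed bose--fermi) and $\lambda\ne0$, your $n=4$ test passes even though traciality must fail. No other choice of four supports does better, because at $n=4$ the $Q$-phase can only enter through the single crossing pair partition, which under a cyclic shift swaps into its conjugate.

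The paper closes this gap by going to $n=5$ with supports $\Delta_1,\Delta_2,\Delta_1,\Delta_2,\Delta_1$. Then the unique surviving partition $\{\{1,3,5\},\{2,4\}\}$ is \emph{non-crossing}, so the moment equals $\lambda\,\sigma(\Delta_1)\sigma(\Delta_2)$ with no $Q$-phase at all; after the cyclic shift the surviving partition $\{\{1,2,4\},\{3,5\}\}$ \emph{is} crossing and picks up a single factor $\int Q$. Comparing a constant to $\int Q$ (rather than $\int Q$ to $\int\bar Q$) forces $Q\equiv1$ directly. So your plan is salvageable, but you must move the Poisson counterexample from degree~$4$ to degree~$5$ and exploit a $3+2$ partition rather than pair partitions.
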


\begin{proof} We first consider the Poisson case, i.e., $\lambda\ne0$. We take any disjoint sets $\Delta_1,\Delta_2\in\mathcal B_0(T)$ and set $f_i:=\chi_{\Delta_1}$, $i=1,3,5$, and $f_i:=\chi_{\Delta_2}$, $i=2,4$.  Using formula \eqref{lgyuti}, we get
$$\tau(\la f_1,\omega\ra\dotsm\la f_5,\omega\ra)=\lambda\sigma(\Delta_1)\sigma(\Delta_2),$$
while
$$\tau(\la f_5,\omega\ra\la f_1,\omega\ra\dotsm\la f_4,\omega\ra)=\lambda\int_{\Delta_1}\sigma(dt_1)\int_{\Delta_2}\sigma(dt_2)Q(t_2,t_1).$$
Hence, $\tau$ is not tracial if  $Q\not\equiv 1$. In the classical case, $Q\equiv1$, the state is trivially tracial, as the operators $(\la f,\omega\ra)_{f\in B_0(T)}$ commute.

Next, we consider the Gaussian case, $\lambda=0$. With the same functions $f_1,\dots,f_4$ as above, we get
$$\tau(\la f_1,\omega\ra\dotsm\la f_4,\omega\ra)=
\int_{\Delta_1}\sigma(dt_1)\int_{\Delta_2}\sigma(dt_2)Q(t_2,t_1),$$
while $$\tau(\la f_4,\omega\ra\la f_1,\omega\ra\dotsm\la f_3,\omega\ra)=\int_{\Delta_1}\sigma(dt_1)\int_{\Delta_2}\sigma(dt_2)Q(t_1,t_2).$$
Hence, for the state $\tau$ to be tracial, it is necessary that the function $Q$ be symmetric, i.e., it must take values in $\{-1,1\}$. Let us show that, in the latter case, the state $\tau$ is indeed tracial. 

For $\lambda=0$, formula \eqref{lgyuti} reduces to 
\begin{equation}\label{f76r7er} \tau(\la f^{(n)},\omega^{\otimes n}\ra)=\sum_{\mathcal V\in \mathcal P^{(n)}_{2}}\int_{T^n}f^{(n)}(t_1,\dots,t_n)
Q(\mathcal V;t_1,\dots,t_n)\prod_{B\in\mathcal V}\, \delta(dt_B),\end{equation}
where $\mathcal P^{(n)}_{2}$ denotes the collection of all partitions $\mathcal V$ of $\{1,\dots,n\}$ such that each block $B\in\mathcal V$ has exactly two elements. To prove that $\tau$ is tracial it suffices to show that, for any $f_1,\dots,f_{n+1}\in B_0(T)$, $n$ odd,
\begin{equation}\label{kjfr7}
\tau\big(\la f_1,\omega\ra\dotsm \la f_n,\omega\ra\la f_{n+1},\omega\ra\big)=\tau\big(\la f_{n+1},\omega\ra\la f_1,\omega\ra\dotsm \la f_n,\omega\ra\big).
\end{equation}

Let us fix any partition $\mathcal V\in\mathcal P_2^{(n+1)}$. Let $i\in\{1,\dots,n\}$ be such that $\{i,n+1\}$ is a block from $\mathcal V$.  By \eqref{cdtdtrsd},
\begin{equation}\label{kiyure67egg} Q(\mathcal V;t_1,\dots,t_{n+1})=\hspace{-1cm} \prod_{\substack
{ B_1, B_2 \in {\cal V},\\ \min B_1 < \min B_2 <
\max B_1 < \max B_2 <n+1}}\hspace{-1cm} Q(t_{\min B_2},
t_{\max B_1})\hspace{-5mm}\prod_{\substack{B\in\mathcal V\\ \min B<i<\max B}}Q(t_i,t_{\max B}).\end{equation}
Define a permutation $\pi\in S_{n+1}$ by $\pi(j):=j+1$, $j=1,\dots,n$, $\pi(n+1):=1$. Then the sets $\pi B$ with $B\in\mathcal V$ form a new partition from $\mathcal P_2^{(n+1)}$. We denote this partition by $\pi \mathcal V$. Note that $\{1,i+1\}$ is a block from $\pi\mathcal V$. Using that the function $Q$ is symmetric, we get, analogously to \eqref{kiyure67egg},
$$
Q(\pi\mathcal V;t_1,\dots,t_{n+1})=\hspace{-1cm} \prod_{\substack
{ B_1, B_2 \in \pi{\cal V},\\ 1<\min B_1 < \min B_2 <
\max B_1 < \max B_2 }}\hspace{-1cm} Q(t_{\min B_2},
t_{\max B_1})\hspace{-5mm}\prod_{\substack{B\in\pi\mathcal V\\ \min B<i+1<\max B}}Q(t_{i+1},t_{\min B}).
$$
Hence
\begin{equation}\label{ifre75f}
Q(\pi\mathcal V;t_{n+1},t_1,\dots,t_{n})=\hspace{-1cm} \prod_{\substack
{ B_1, B_2 \in {\cal V},\\ \min B_1 < \min B_2 <
\max B_1 < \max B_2 <n+1}}\hspace{-1cm} Q(t_{\min B_2},
t_{\max B_1})\hspace{-5mm}\prod_{\substack{B\in\mathcal V\\ \min B<i<\max B}}Q(t_i,t_{\min B}).
\end{equation}
By \eqref{kiyure67egg} and \eqref{ifre75f},
\begin{align}
&\int_{T^{n+1}}f_{n+1}(t_1)f_1(t_2)\dotsm f_n(t_{n+1})
Q(\pi\mathcal V;t_1,\dots,t_{n+1})\prod_{B\in\pi\mathcal V}
\delta(dt_B)\notag\\
&\quad= \int_{T^{n+1}}f_{n+1}(t_{n+1})f_1(t_1)\dotsm f_n(t_{n})
Q(\pi\mathcal V;t_{n+1},t_1,\dots,t_{n})\prod_{B\in\mathcal V}
\delta(dt_B)\notag\\
&\quad= \int_{T^{n+1}}f_1(t_1)\dotsm f_{n+1}(t_{n+1})
Q(\mathcal V;t_1,\dots,t_{n+1})\prod_{B\in\mathcal V}
\delta(dt_B),\label{g7e675r}
\end{align}
where we used that $t_{\min B}=t_{\max B}$ for 
$\delta(dt_{B})$-a.a.\ $(t_{\min B},t_{\max B})$. Formula \eqref{kjfr7} now follows from \eqref{f76r7er} and \eqref{g7e675r}.
\end{proof}

\section{$Q$-cumulants and $Q$-independence}\label{gtuyr6}
Our next aim is to introduce  $Q$-deformed cumulants. Let $\mathfrak F $ be a complex separable Hilbert space, and let $\mathfrak D$ be a linear subspace of $\mathfrak F$. Let $(\la f,\xi\ra)_{f\in B_0(T)}$ be a family linear symmetric operators acting on $\mathfrak D$, i.e., $\la f,\xi\ra:\mathfrak D\to\mathfrak D$, and such that the mapping $B_0(T)\ni f\mapsto \la f,\xi\ra$ is linear. We also assume that
 \begin{equation}\label{ftydr}\la f,\xi\ra=0\ \text{if and only if $f=0$ $\sigma$-a.e.}\end{equation}

\begin{remark} Analogously to Section~\ref{cfydry}, the reader may intuitively think of $\xi(t)$ as a field at point $t\in T$, while $\la f,\xi\ra=\int_T\sigma(dt)\,f(t)\xi(t)$. 
\end{remark}

For a fixed vector $\Psi\in\mathfrak D$ with $\|\Psi\|=1$, we define moments of  $(\la f,\xi\ra)_{f\in B_0(T)}$ by
$$\tau(\la f_1,\xi\ra\dotsm \la f_n,\xi\ra):=(\la f_1,\xi\ra\dotsm \la f_n,\xi\ra\Psi,\Psi)_{\mathfrak F},\quad f_1,\dots,f_n\in B_0(T).$$
Extending by linearity, we get a state (expectation) $\tau$ on the unital $*$-algebra generated by the operators $(\la f,\xi\ra)_{f\in B_0(T)}$. We will assume that, for each $n\in\mathbb N$, there exists a complex-valued, Radon measure $m_n$ on $T^n$ satisfying
\begin{equation}\label{kfytdy}
\tau(\la f_1,\xi\ra\dotsm \la f_n,\xi\ra)=\int_{T^n}f_1(t_1)\dotsm f_n(t_n)\,m_n(dt_1\times\dotsm\times dt_n), \quad f_1,\dots,f_n\in B_0(T).
\end{equation}
(Evidently each measure $m_n$ is uniquely defined.) Inspired by formula \eqref{lgyuti}, we now
give the following

\begin{definition} For each $n\in\mathbb N$, the {\it $n$-th $Q$-cumulant measure\/} of the operators (noncommutative random variables)
$(\la f,\xi\ra)_{f\in B_0(T)}$ is defined as the complex-valued Radon measure $c_n$ on $(T^n,\mathcal B(T^n))$ given  recursively through \begin{gather*}c_1(dt):=m_1(dt),\\
m_n(dt_1\times\dots\times dt_n)=\sum _{\mathcal V\in \mathcal P^{(n)}}Q(\mathcal V;t_1,\dots,t_n)\prod_{B\in\mathcal V}c_{|B|}(dt_B),\quad n\ge2.\end{gather*}
Here $\mathcal P^{(n)}$ denotes the collection of all partitions of $\{1,\dots,n\}$, the factor $Q(\mathcal V;t_1,\dots,t_n)$
is given by \eqref{cdtdtrsd}, and for each $B=\{i_1,i_2,\dots,i_k\}\in\mathcal V$,  $c_{|B|}(dt_B):=c_k(dt_{i_1}\times dt_{i_2}\times\dots\times dt_{i_k})$.
For any $f_1,\dots,f_n\in B_0(T)$, we define the {\it $n$-th $Q$-cumulant of\/}   $\la f_1,\xi\ra,\dots, \la f_n,\xi\ra$ by
\begin{equation}\label{cfydy} C_n(\la f_1,\xi\ra,\dots, \la f_n,\xi\ra):=\int_{T^n}f_1(t_1)\dotsm f_n(t_n)\,c_n(dt_1\times \dots\times dt_n).\end{equation}
\end{definition}

The following lemma shows the consistency of this definition.

\begin{lemma}
Let $f_1,\dots,f_n\in B_0(T)$ and let, for some $i\in\{1,\dots,n\}$, $f_i=0$ $\sigma$-a.e. Then $ C_n(\la f_1,\xi\ra,\dots, \la f_n,\xi\ra)=0$.
\end{lemma}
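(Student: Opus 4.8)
The plan is to prove the following sharper statement, of which the lemma is an immediate special case: for every $i\in\{1,\dots,n\}$ and every Borel set $N\subseteq T$ with $\sigma(N)=0$, the cumulant measure $c_n$ vanishes on every Borel subset of the slab $S_{i,N}:=\{(t_1,\dots,t_n)\in T^n\mid t_i\in N\}$; equivalently, $c_n$ is absolutely continuous with respect to $\sigma$ ``in each coordinate separately''. Granting this, the lemma follows at once: if $f_i=0$ $\sigma$-a.e., put $N:=\{t\in T\mid f_i(t)\ne0\}$, a $\sigma$-null Borel set; then the bounded function $f_1(t_1)\dotsm f_n(t_n)$ is supported in $S_{i,N}$, and hence its integral against $c_n$, which by \eqref{cfydy} equals $C_n(\la f_1,\xi\ra,\dots,\la f_n,\xi\ra)$, vanishes.

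First I would establish the slab property for the \emph{moment} measures $m_n$, $n\in\mathbb N$, where it is a direct consequence of the standing hypotheses. If $g\in B_0(T)$ vanishes $\sigma$-a.e., then $\la g,\xi\ra=0$ by \eqref{ftydr}, so any moment $\tau(\la f_1,\xi\ra\dotsm\la f_n,\xi\ra)$ one of whose factors is $\la g,\xi\ra$ equals $0$. Choosing in \eqref{kfytdy} the functions $f_j$ with $j\ne i$ to be indicators of pre-compact Borel sets and the $i$-th function to be $\chi_N$ shows that $m_n$ annihilates every measurable rectangle contained in $S_{i,N}$. Since pre-compact rectangles form a $\pi$-system generating the Borel $\sigma$-algebra of any compact cube $K^n\subseteq T^n$, the uniqueness theorem for finite complex measures (applied to the restriction of $m_n$ to $S_{i,N}\cap K^n$) gives that $m_n$ vanishes on every Borel subset of $S_{i,N}\cap K^n$; letting $K\uparrow T$ yields the slab property for $m_n$.

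Next I would unwind the recursive definition of $c_n$. Separating off the one-block partition, the recursion reads $c_n=m_n-\sum_{\mathcal V}Q(\mathcal V;t_1,\dots,t_n)\prod_{B\in\mathcal V}c_{|B|}(dt_B)$, the sum running over partitions $\mathcal V$ of $\{1,\dots,n\}$ with at least two blocks. Substituting this identity repeatedly into itself (the block sizes strictly decreasing at each step, so the process terminates) expresses $c_n$ as a \emph{finite} sum of terms of the form $a(t_1,\dots,t_n)\,\prod_{B\in\mathcal W}m_{|B|}(dt_B)$, where $\mathcal W$ runs over partitions of $\{1,\dots,n\}$ and each coefficient $a$ is a product of a sign and finitely many values of $Q$, so that $|a(t_1,\dots,t_n)|\equiv1$. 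Now fix $i$ and a $\sigma$-null $N$, and consider one such term. The index $i$ lies in a unique block $B_0\in\mathcal W$, and $\prod_{B\in\mathcal W}m_{|B|}(dt_B)$ is the product of the finite measure $m_{|B_0|}(dt_{B_0})$ with the product measure over the remaining blocks; by Fubini, integrating $a\cdot f_1(t_1)\dotsm f_n(t_n)$ (a bounded function supported where $t_i\in N$) against this term amounts, after freezing the variables outside $B_0$, to integrating a bounded function against $m_{|B_0|}$ over the slab $\{t_i\in N\}$ inside $T^{|B_0|}$, which is $0$ by the previous paragraph (here $1\le|B_0|\le n$, the value $|B_0|=n$ corresponding to the leading term $m_n$). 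Summing over the finitely many terms yields the slab property for $c_n$, and hence the lemma.

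The only delicate point is the bookkeeping in the last step: one must check that iterating the recursion really produces only finitely many terms of the above form and that the accompanying coefficients stay uniformly bounded. This is so because each substitution multiplies by one factor $Q(\mathcal V;\cdot)$, which by \eqref{cdtdtrsd} is a product of values of $Q$ and hence has modulus $1$, and by a sign; the fact that $Q(\mathcal V;\cdot)$ couples several blocks (through their minima and maxima) is then immaterial, being absorbed into the bounded multiplier $a$ and playing no role once one integrates first over the variables of the block containing $i$. Everything else --- the passage from rectangles to arbitrary Borel subsets of a slab, and the localization to compacta ensuring that the measures involved are finite --- is routine measure theory.
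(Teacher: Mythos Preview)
Your proof is correct and follows essentially the same strategy as the paper: both arguments (i) observe from \eqref{ftydr} and \eqref{kfytdy} that moments vanish when one test function is $\sigma$-a.e.\ zero, (ii) unwind the cumulant recursion to express $c_n$ as a finite sum of terms of the form (bounded $Q$-coefficient)$\times\prod_{B\in\mathcal W}m_{|B|}(dt_B)$ over partitions $\mathcal W$, and (iii) apply Fubini with the inner integral taken over the block containing $i$.

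The one genuine difference is in step (iii). The paper exploits the structure of the coefficient $R$: since $R$ is a product of factors $Q(t_u,t_v)$ with $u,v$ in different blocks, one can split $R=R_1R_2$ so that, with the outer variables frozen, the inner integrand over $B_j$ is again a \emph{product} of one-variable functions, one of which (the factor at position $i$) is $\sigma$-a.e.\ zero; then \eqref{fytf} applies directly. You bypass this factorization by first upgrading the product-function vanishing of $m_k$ to a ``slab'' property (vanishing on all Borel subsets of $\{t_i\in N\}$) via a $\pi$-system uniqueness argument for finite complex measures; then only boundedness of the coefficient $a$ is needed in the Fubini step. Your route is a bit more measure-theoretic and avoids analyzing how $Q$-factors couple blocks, at the cost of the extra (routine) $\pi$-$\lambda$ argument; the paper's route is more explicit and stays with product test functions throughout. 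Both are short and valid.
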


\begin{proof}
In view of formulas \eqref{ftydr} and  \eqref{kfytdy}, for any $g_1,\dots,g_k\in B_0(T)$, $k\in\mathbb N$, such that, for some $j\in\{1,\dots,k\}$, $g_j=0$ $\sigma$-a.e., we have
\begin{equation}\label{fytf}
\int_{T^k} g_1(t_1)\dotsm g_k(t_k)\,m_k(dt_1\times\dots\times dt_k)=0.
\end{equation}
It can be easily shown by induction that each cumulant measure $c_n$ is a finite sum of complex-valued measures
of the form
\begin{equation}\label{tsts}
R(t_1,\dots,t_n)\,m_{|B_1|}(dt_{B_1})\dotsm m_{|B_k|}(dt_{B_k}),
\end{equation}
where $\mathcal V=\{B_1,\dots,B_k\}\in\mathcal P^{(n)}$ and the function $R(t_1,\dots,t_n)$ is a finite product of functions $Q(t_u,t_v)$, were $u,v\in\{1,\dots,n\}$ belong to different blocks of the partition $\mathcal V$. Assume that the number $i\in\{1,\dots,n\}$ for which $f_i=0$ $\sigma$-a.e.\ belongs to $B_j\in\mathcal V$. Represent
$$ R(t_1, \dots , t_n)=R_1(t_1, \ldots , t_n)R_2(t_1, \ldots , t_n),$$
 where $R_1(t_1, \ldots , t_n)$ is a
product of $Q(t_{u}, t_{v})$ such that $u, v
\notin B_j$, and $R_2(t_1, \ldots , t_n)$ is a product of
$Q(t_{u}, t_{v})$ or $\overline{Q(t_{u}, t_{v})}$
such that $u \in B_j$ and $v \not\in B_j$. Then
\begin{align*}
&\int_{T^n}f_1(t_1)\dotsm f_n(t_n)R(t_1, \ldots ,
t_n)\,m_{|B_1|}(dt_{B_1})\dotsm m_{|B_k|}(dt_{B_k}) \\
&\quad= \int_{T^{n-|B_j|}}\bigotimes_{
l=1,\dots,k,\ l\ne j}
m_{|B_l|}(dt_{B_l})
\left(\prod_{u = 1,\dots,n,\ u\notin B_j}
f_{u}(t_{u})\right)
 R_1(t_1, \ldots ,
 t_n)\\
&\qquad\times
\int_{T^{|B_j|}}m_{|B_j|}(dt_{B_j})R_2(t_1, \ldots , t_n)
\prod_{v\in B_j}f_{v}(t_{v}),
\end{align*}
which is equal to $0$ by \eqref{fytf}.
\end{proof}

\begin{remark} We can heuristically think of a field $(\xi(t))_{t\in T}$, where $\xi(t):=\la\delta_t,\xi\ra$. Then, in view of formula \eqref{kfytdy},
$$\tau(\xi(t_1)\dotsm\xi(t_n))=m_n(dt_1\times\dots\times dt_n),$$
i.e., the measure $m_n$ gives the $n$-th moments of the filed  $(\xi(t))_{t\in T}$, while in view of formula \eqref{cfydy},  $c_n(dt_1\times\dots\times dt_n)$ is the $Q$-cumulant of $\xi(t_1),\dots,\xi(t_n)$:
$$ C_n(\xi(t_1),\dots,\xi(t_n))=c_n(dt_1\times\dots \times dt_n).$$
\end{remark}

Now that we have defined $Q$-cumulants, we can naturally introduce the notion of $Q$-independence.

\begin{definition} For $f_1,\dots,f_n\in B_0(T)$ ($n\ge2$), we will say that the operators (noncommutative random variables)
$\la f_1,\xi\ra,\dots,\la f_n,\xi\ra$ are {\it $Q$-independent\/} if, for any $k\ge2$ and any non-constant sequence $(j_1,j_2,\dots,j_k)$ of numbers from $\{1,\dots,n\}$,
$$C_k(\la f_{j_1},\xi\ra,\la f_{j_2},\xi\ra,\dots,\la f_{j_k},\xi\ra)=0.$$
\end{definition}

Let us consider the family of operators $(\la f,\omega\ra)_{f\in B_0(T)}$ as in Section~\ref{cfydry}. By Corollary~\ref{fyde}, $n$-th $Q$-cumulant measure of this family is given by
\begin{gather*}
c_1(dt_1)=0,\notag\\
 c_n(dt_1\times\dotsm\times dt_n)=\lambda^{n-2}
 \delta(dt_1\times\dots\times dt_n),\quad n\ge2,
 \end{gather*}
as we would expect for a Gaussian or a (centered) Poisson process, respectively.
Hence, for any $f_1,\dots,f_n\in B_0(T)$ ($n\ge2$) and any sequence $(j_1,j_2,\dots,j_k)$ of numbers from $\{1,\dots,n\}$, we have
$$ C_k(\la f_{j_1},\omega\ra,\dots,\la f_{j_k},\omega\ra)=\lambda^{k-2}\int_T f_{j_1}(t)\dotsm f_{j_k}(t)\,\sigma(dt). $$
Hence,
if  $f_if_j=0$ $\sigma$-a.e.\ for all $1\le i<j\le n$, the operators $\la f_1,\omega\ra,\dots,\la f_n,\omega\ra$ are $Q$-independent.

\section{$Q$-L\'evy processes}\label{kigf7ur7}

We are now in a position to introduce the notion of  $Q$-L\'evy processes.

\begin{definition}Let $(\la f,\xi\ra)_{f\in B_0(T)}$ be a family of operators as in Section~\ref{gtuyr6}. We call $(\la f,\xi\ra)_{f\in B_0(T)}$ a {\it $Q$-L\'evy process\/} if it satisfies the following conditions.

\begin{itemize}
\item[(i)] For any sets $\Delta_1,\dots,\Delta_n\in \mathcal B_0(T)$ which are mutually disjoint, the operators $\la \chi_{\Delta_1},\xi,\ra,\dots,\la \chi_{\Delta_n},\xi\ra$ are $Q$-independent (`independence of increments');

\item[(ii)] For any $\Delta_1,\Delta_2\in \mathcal B_0(T)$ such that $\sigma(\Delta_1)=\sigma(\Delta_2)$,
$$\tau(\la\chi_{\Delta_1},\xi\ra^n)=\tau(\la\chi_{\Delta_2},\xi\ra^n)\quad \text{for all }n\in\mathbb N.$$
 (`stationarity of increments').
\end{itemize}
\end{definition}

It is evident that, for each parameter $\lambda\in\mathbb R$, the operator field $(\la f,\omega\ra)_{f\in B_0(T)}$ from Section~\ref{cfydry} is a $Q$-L\'evy process. We will now discuss a rather general construction of  (a class of) $Q$-L\'evy processes, which is close in spirit  both to classical probability and to  free probability, and which includes the $Q$-Gaussian and $Q$-Poisson processes as  special cases.

Let $\nu$ be a probability measure on $\mathbb{R}$ and assume that
there exists $\varepsilon >0$ such that
\begin{equation}\label{dstrst}
\int_{\mathbb{R}}e^{{\varepsilon}|x|} \,\nu(dx) < \infty ,
\end{equation} or, equivalently, there exists $C>0$ such that, for all $n\in \mathbb{N}$,
\begin{equation}\label{cfdh}
\int_{\mathbb{R}}|x|^{n}\, \nu(dx) \le n!\,C^n.
\end{equation}
This assumption assures that the polynomials are dense in
$L^2(\mathbb{R}, \nu)$. We denote by $\mu_k$ the $k$-th order monomial on $\mathbb  R$, i.e.,
\begin{equation}\label{hcdydyd}\mathbb R\ni x\mapsto \mu_k(x):=x^k,\quad k\in\mathbb Z_+.\end{equation}
 In particular, $\mu_0\equiv1$.

Consider a function $Q:T^{(2)}\to S^1$ as above. We extend $Q$  by setting
$$Q(t_1,x_1,t_2,x_2):=Q(t_1,t_2),\quad (t_1,t_2)\in T^{(2)},\ (x_1,x_2)\in\mathbb R^2.$$
We  now set $$\mathcal G:=L^2(T\times \mathbb R,\sigma\otimes \nu)=\h\otimes L^2(\mathbb R,\nu),$$
 and construct the corresponding $Q$-symmetric Fock space $\mathcal F^Q(\mathcal G)$.
  For each $f\in B_0(T)$, we define an operator
$$\la f,\xi\ra:=a^{+}(f\otimes \mu_0)+a^0(f\otimes \mu_1)+a^-(f\otimes\mu_0)$$
on a proper domain $\mathfrak D$ in $\mathcal F^Q(\mathcal G)$. The domain $\mathfrak D$ consists of all finite sequences
$$F=(F^{(0)},F^{(1)},\dots,F^{(n)},0,0,\dots),\quad n\in\mathbb Z_+,$$
such that each $F^{(k)}$ with $k\ne0$ has the form
$$F^{(k)}(t_1,x_1,\dots,t_k,x_k)=P_k\left[\sum_{(i_1,i_2,\dots,i_k)\in\{0,1,\dots,N\}^k}f_{(i_1,i_2,\dots,i_k)}
(t_1,t_2,\dots,t_k)x_1^{i_1}x_2^{i_2}\dotsm x_{i_k}^{i_k}\right],$$
where $f_{(i_1,i_2,\dots,i_k)}\in \h_{\mathbb C}^{\otimes k}$ and $N\in\mathbb N$. Clearly, each operator $\la f,\xi\ra$ maps the domain $\mathfrak D$ into itself.

Note that, if the measure $\nu$ is concentrated at one point, $\lambda\in\mathbb R$, then
$\mathcal G=\h$ and
$(\la f,\xi\ra)_{f\in B_0(T)}$ is just the  $Q$-Gaussian/Poisson process  $(\la f,\omega\ra)_{f\in B_0(T)}$ corresponding to the parameter $\lambda$.

\begin{remark}\label{klgyu} Set $\mathbb R^*:=\mathbb R\setminus\{0\}$ and define a measure $\widetilde \nu$ on $\mathbb R^*$ by
\begin{equation}\label{hjufdtu}\widetilde \nu(dx):=\chi_{\mathbb R^*}(x)\,\frac1{x^2}\,\nu(dx).
\end{equation}
Let also $\varepsilon_0$ denote the Dirac measure at $0$. Then, we can define a unitary isomorphism
\begin{equation}\label{vgdcyd} U:\mathcal G\to\widetilde {\mathcal G}:= L^2(T\times\mathbb R,\sigma\otimes(\nu(\{0\})\varepsilon_0+\widetilde\nu))\end{equation}
by setting
\begin{equation} (Uf)(t,x):=\begin{cases}f(t,0),&\text{if }x=0,\\
xf(t,x),&\text{if }x\ne0.\end{cases}\end{equation}
We can naturally extend $U$ to a unitary isomorphism
\begin{equation}\label{vgcdydy} U:\mathcal F^Q(\mathcal G)\to\mathcal F^Q(\widetilde {\mathcal G}).\end{equation}
Under this isomorphism, each operator $\la f,\xi\ra$ goes over into the operator
\begin{equation} \label{kif8ur8} a^+(f\otimes\chi_{\{0\}})+a^-(f\otimes\chi_{\{0\}})+a^{+}(f\otimes\mu_1)+a^0(f\otimes\mu_1)+a^-(f\otimes\mu_1),\end{equation}
defined on $U\mathfrak D$.
The operator $a^+(f\otimes\chi_{\{0\}})+a^-(f\otimes\chi_{\{0\}})$ gives the $Q$-Gaussian part of the process, the operator
$a^{+}(f\otimes\mu_1)+a^0(f\otimes\mu_1)+a^-(f\otimes\mu_1)$ gives the `jump part' of the process, while $\widetilde\nu$ is the {\it $Q$-L\'evy measure\/} of the process.
\end{remark}

\begin{remark}
It can be shown that each  $F\in\mathfrak D$ is an analytic vector for each operator $\la f,\xi\ra$ with  $f\in B_0(T)$, which implies that the operators $\la f,\xi\ra$  are essentially self-adjoint on $\mathfrak D$. In the case where the measure $\nu$ is compactly supported, this is a trivial fact. In the general case, one has to use  estimate \eqref{cfdh}, and the proof becomes more involved.
\end{remark}

We now introduce the vacuum state $\tau$ on the unital $*$-algebra $\mathscr P$ generated by the operators $(\la f,\xi\ra)_{f\in B_0(T)}$.

\begin{proposition}\label{hjvghfc}
The $n$-th $Q$-cumulant measure of $(\la f,\xi\ra)_{f\in B_0(T)}$ is given by
\begin{gather*}
c_1(dt_1)=0,\notag\\
 c_n(dt_1\times \dots\times dt_n)=\left(\int_{\mathbb R}x^{n-2}\,\nu(dx)\right)
 \delta(dt_1\times\dots\times dt_n),\quad n\ge2.
 \end{gather*}
Hence, $(\la f,\xi\ra)_{f\in B_0(T)}$ is a $Q$-L\'evy process.
\end{proposition}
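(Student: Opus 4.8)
The plan is to compute the moments $\tau(\la f_1,\xi\ra\dotsm\la f_n,\xi\ra)$ in closed form, read off the $Q$-cumulant measures from their recursive definition, and then verify properties (i)--(ii) of a $Q$-L\'evy process. The starting observation is that, with $\mu_0\equiv 1$, one has $f\otimes\mu_1=\ell\cdot(f\otimes\mu_0)$ where $\ell(t,x):=x$ is the second coordinate function on $T\times\mathbb R$; thus
\[
\la f,\xi\ra=a^+(f\otimes\mu_0)+a^0\big(\ell\cdot(f\otimes\mu_0)\big)+a^-(f\otimes\mu_0)
\]
is precisely the white-noise field of Section~\ref{cfydry}, but built over the space $T\times\mathbb R$ (with measure $\sigma\otimes\nu$ and with $Q$ extended trivially in the $x$-variable) and with the constant parameter $\lambda$ replaced by the position-dependent coefficient $\ell$. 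The point I would stress is that the scalar nature of $\lambda$ is never essential in Proposition~\ref{fd5w}, Theorem~\ref{eq243q} or Corollary~\ref{fyde}: in each step a factor $\lambda$ occurs only multiplied by a $\delta$-symbol that forces the relevant arguments to coincide (see formula \eqref{dd} and the recursion \eqref{fdtyde6e}), so it may equally be a measurable function evaluated at the common point. Consequently the Wick rule and the moment formula \eqref{lgyuti} carry over to $\xi$ after replacing $\prod_{B\in\mathcal V}\lambda^{|B|-2}\,\delta(dt_B)$ by $\prod_{B\in\mathcal V}\ell^{\,|B|-2}\,\delta\big(d(t,x)_B\big)$, the latter denoting the diagonal measure on $(T\times\mathbb R)^{|B|}$.

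I would then integrate out the $x$-variables. Since $Q$ does not depend on $x$ and $\delta(d(t,x)_B)=\delta(dt_B)\,\delta(dx_B)$, each block $B$ of size $k$ contributes one integral $\int_{T\times\mathbb R}(\sigma\otimes\nu)(dt\,dx)$ together with the factor $\ell(t,x)^{k-2}=x^{k-2}$, while the $\mu_0$-slots contribute nothing because $\int_{\mathbb R}\nu(dx)=1$. This yields
\[
\tau(\la f_1,\xi\ra\dotsm\la f_n,\xi\ra)=\sum_{\mathcal V\in\mathcal P^{(n)}_{\ge2}}\int_{T^n}f_1(t_1)\dotsm f_n(t_n)\,Q(\mathcal V;t_1,\dots,t_n)\prod_{B\in\mathcal V}\Big(\int_{\mathbb R}x^{|B|-2}\,\nu(dx)\Big)\delta(dt_B),
\]
which in particular exhibits the moment measures $m_n$ required in Section~\ref{gtuyr6} (and one checks directly that $m_1=0$). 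Comparing this with the recursive definition of the cumulant measures and using that the $c_n$ are uniquely determined by that recursion, I would prove by induction on $n$ that $c_1=0$ and $c_n(dt_1\times\dots\times dt_n)=\big(\int_{\mathbb R}x^{n-2}\,\nu(dx)\big)\,\delta(dt_1\times\dots\times dt_n)$ for $n\ge2$: inserting the guessed $c_k$ for $k<n$ into $m_n=\sum_{\mathcal V\in\mathcal P^{(n)}}Q(\mathcal V;\cdot)\prod_{B}c_{|B|}(dt_B)$ annihilates every partition having a singleton (since $c_1=0$) and reproduces exactly the displayed moment formula apart from the one-block partition, which then forces the stated value of $c_n$.

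With the cumulants in hand, the two conditions are immediate. For property (i): given mutually disjoint $\Delta_1,\dots,\Delta_n$ and a non-constant index sequence $(j_1,\dots,j_k)$, formula \eqref{cfydy} gives $C_k(\la\chi_{\Delta_{j_1}},\xi\ra,\dots,\la\chi_{\Delta_{j_k}},\xi\ra)=\big(\int_{\mathbb R}x^{k-2}\nu(dx)\big)\,\sigma(\Delta_{j_1}\cap\dots\cap\Delta_{j_k})=0$ for $k\ge2$, and $C_1=0$ always. For property (ii): the moment formula shows $\tau(\la\chi_\Delta,\xi\ra^n)$ is a sum over $\mathcal V\in\mathcal P^{(n)}_{\ge2}$ of $\prod_{B}\big(\int x^{|B|-2}\nu\big)$ times $\int_{\Delta^{|\mathcal V|}}Q(\mathcal V;\cdot)\,\prod_{B}\sigma(ds_B)$, so it suffices to note that this last integral depends only on $\sigma(\Delta)$; in the anyon case this follows because, by non-atomicity of $\sigma$, the integrand is constant on each of the $|\mathcal V|!$ ordered regions of $\Delta^{|\mathcal V|}$, each of $\sigma^{\otimes|\mathcal V|}$-measure $\sigma(\Delta)^{|\mathcal V|}/|\mathcal V|!$, so the integral equals a combinatorial constant times $\sigma(\Delta)^{|\mathcal V|}$.

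The step I expect to be the main obstacle is the generalized moment formula: carefully verifying that the Wick-ordering recursion of Proposition~\ref{fd5w} and Theorem~\ref{eq243q} does survive the replacement of the scalar $\lambda$ by the position-dependent coefficient $\ell$, and that the subsequent bookkeeping of the $x$-integrations really collapses a block of size $k$ to the single factor $\int_{\mathbb R}x^{k-2}\,\nu(dx)$. The verification of (ii) beyond the anyon setting is a secondary point, as it would require the $Q$-factors to integrate, over products of copies of $\Delta$, to quantities depending only on $\sigma(\Delta)$.
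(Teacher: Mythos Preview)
Your proposal is correct and arrives at exactly the moment formula the paper establishes; the subsequent steps (reading off the $c_n$ by induction, checking properties (i)--(ii)) are done more explicitly by you than in the paper, which simply states that the moment formula suffices.

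The one genuine difference is in how you obtain the moment formula. The paper proceeds in two ways: when $\nu(\{0\})=0$ it invokes the unitary transform of Remark~\ref{klgyu}, under which $\la f,\xi\ra$ becomes a $Q$-Poisson field with $\lambda=1$ and test functions $f\otimes\mu_1$, so Corollary~\ref{fyde} applies verbatim; for general $\nu$ it expands the product $(\partial^\dag+x\,\partial^\dag\partial+\partial)\dotsm(\partial^\dag+x\,\partial^\dag\partial+\partial)\Omega$ directly and matches nonvanishing terms with partitions $\mathcal V\in\mathcal P^{(n)}_{\ge2}$ (the maximum of a block gets a creation, the minimum an annihilation, the intermediate points neutral operators, and the crossings produce the $Q$-factors). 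Your route---observing that nothing in Proposition~\ref{fd5w}, Theorem~\ref{eq243q}, or Corollary~\ref{fyde} uses the constancy of $\lambda$, so one may replace it by $\ell(t,x)=x$ and then integrate out the $x$-variables block by block---is a clean third alternative that handles all $\nu$ at once and reuses the Section~\ref{cfydry} machinery rather than redoing the combinatorics. The only caveat is that $\ell\notin L^\infty$ in general, so one must work on the polynomial domain $\mathfrak D$ throughout (which you implicitly do); this is harmless.

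Your flag on property~(ii) for non-anyon $Q$ is well taken: the paper does not address it either, and indeed for a generic $Q$ (e.g.\ the distance-threshold example) the integrals $\int_{\Delta^{|\mathcal V|}}Q(\mathcal V;\cdot)\,\sigma^{\otimes|\mathcal V|}$ need not depend only on $\sigma(\Delta)$. Your argument for the anyon case is the right one.
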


\begin{remark}
For each $f\in B_0(T)$, we define
$$ C_n(\la f,\xi\ra):=C_n(\la f,\xi\ra,\dots,\la f,\xi\ra)$$
to be the $n$-th $Q$-cumulant of the random variable $\la f,\xi\ra$. Then, by  Proposition~\ref{hjvghfc}, for each $\Delta\in\mathcal B_0(T)$,
$$ C_n(\la\chi_\Delta,\xi\ra)=\left(\int_{\mathbb R}x^{n-2}\,\nu(dx)\right)\sigma(\Delta),\quad n\ge2.$$
Hence, in view of Remark~\ref{klgyu},
$$ C_2(\la\chi_\Delta,\xi\ra)=\sigma(\Delta),\quad C_n(\la\chi_\Delta,\xi\ra)=\left(\int_{\mathbb R^*}x^{n}\,\widetilde\nu(dx)\right)\sigma(\Delta),\quad n\ge3.$$
In particular, if $\sigma(\Delta)=1$, the second $Q$-cumulant of $\la\chi_\Delta,\xi\ra$ is 1, and the $n$-th $Q$-cumulant
($n\ge3$) is equal to the $n$-th moment of the $Q$-L\'evy measure. In the classical case, $Q\equiv1$, this property is equivalent to the infinite divisibility of the distribution of a random variable, see e.g.\ \cite{Shiryaev}. We also refer the reader to Nica and Speicher \cite{NicaSpeicher} and to Anshelevich \cite{A2}, where a similar property was discussed in the framework of free probability and in the case of $q$-commutation relations $(-1< q<1)$, respectively.

\end{remark}

\begin{proof}[Proof of Proposition~\ref{hjvghfc}] It suffices to show that, for any $f_1,\dots,f_n\in B_0(T)$,
\begin{align}
&\tau(\la f_1,\xi\ra\dotsm\la f_n,\xi\ra)\notag\\
&\quad=\sum_{\mathcal V\in \mathcal P^{(n)}_{\ge2}}\int_{T^n}f_1(t_1)\dotsm f_n(t_n)
Q(\mathcal V;t_1,\dots,t_n) \prod_{B\in\mathcal V}\int_{\mathbb R}x^{|B|-2}\,\nu(dx)\, \delta(dt_B).\label{gyuf}
\end{align}
If $\nu(\{0\})=0$, then formula \eqref{gyuf} immediately follows  from Corollary~\ref{fyde} and Remark~\ref{klgyu}.
In the general case, one may argue as follows. Noting that $\nu$ is a probability measure on $\mathbb R$, we get the following representation:
\begin{align}
&\tau(\la f_1,\xi\ra\dotsm\la f_n,\xi\ra)
=\int_{(T\times \mathbb R)^n}\sigma(dt_1)\nu(dx_1)\dotsm \sigma(dt_n)\nu(dx_n)f_1(t_1)\dotsm f_n(t_n)\notag\\
&\quad
\times
\big(
(\partial_{(t_1,x_1)}^\dag+x_1 n(t_1,x_1)+\partial_{(t_1,x_1)})\dotsm (\partial_{(t_n,x_n)}^\dag+x_n n(t_n,x_n)+\partial_{(t_n,x_n)})\Omega,\Omega
\big)_{\mathcal F^Q(\h)},\label{fdtes}
\end{align}
where $n(t,x):=\partial^\dag_{(t,x)}\partial_{(t,x)}$ is the neutral operator at point $(t,x)$. Expand the product in  the second line
of formula \eqref{fdtes}, and leave only those terms which are not {\it a priori} equal to zero. Now  formula \eqref{gyuf} easily follows if we use the following interpretation of partitions $\mathcal V\in  \mathcal P^{(n)}_{\ge2}$. Each $\mathcal V$ corresponds to the term which has the following structure. For each block $B=\{i_1,\dots,i_k\}\in \mathcal V$ with $i_1<i_2<\dots<i_k$, we have:
at place $i_k$ there
 is a creation operator; then at  places $i_{k-1}, i_{k-2},\dots,i_2$ there are neutral operators which act on  place $i_k$ (i.e., they identify their variables with  $(t_k,x_k)$), and finally at place $i_1$ there is an annihilation operator which annihilates place $i_k$ (i.e., variable  $(t_k,x_k)$). To reach place $i_k$, the annihilation operator has to cross all variables $(t_j,x_j)$ with $i_1<j<i_k$ which have not yet been killed, i.e., each $j$ is the maximal point of a block $B'\in\mathcal V$ such that the minimal point of $B'$ is smaller than $i_1=\min B$. These crossings yield the corresponding $Q$-functions.
\end{proof}

We will now show that the  $Q$-L\'evy processes we have just constructed possess a property of pyramidal independence. 
The latter notion was  introduced by K\"ummerer (in an unpublished preprint) and by Bo\.zejko and Speicher in  \cite{BS_1996}. We also refer the reader to Lehner \cite[subsec.~3.5]{Lehner1} for some consequences of pyramidal independence, and to Anshelevich  \cite[Lemma~3.3]{A2} for a discussion of pyramidal independence of increments of a $q$-L\'evy process for $-1<q<1$.

\begin{proposition}\label{futfr}
Let $A,B\in\mathcal B(T)$, $A\cap B=\varnothing$, and let $f_1,\dots,f_m,f_{m+1},\dots,f_{m+k},\linebreak g_1,\dots,g_n\in B_0(T)$ be such that
$\operatorname{supp}f_i\subset A$, $i=1,\dots,m+k$, $\operatorname{supp} g_j\subset B$, $j=1,\dots,n$.
Then
\begin{multline}
\tau\big(\la f_1,\xi\ra\dotsm \la f_m,\xi\ra \la g_1,\xi\ra \dotsm\la g_n,\xi\ra \la f_{m+1},\xi\ra\dotsm \la f_{m+k},\xi\ra\big)\\
=\tau\big(\la f_1,\xi\ra\dotsm \la f_m,\xi\ra  \la f_{m+1},\xi\ra\dotsm \la f_{m+k},\xi\ra\big)
\tau\big( \la g_1,\xi\ra \dotsm\la g_n,\xi\ra \big).\label{gdrtek}
\end{multline}
\end{proposition}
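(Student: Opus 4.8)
The plan is to deduce \eqref{gdrtek} directly from the moment formula \eqref{gyuf} obtained in the proof of Proposition~\ref{hjvghfc}, combined with a short combinatorial observation reflecting the \emph{pyramidal} position of $B$ between the two pieces of $A$. Set $N:=m+n+k$ and regard the left-hand side of \eqref{gdrtek} as $\tau(\la h_1,\xi\ra\dotsm\la h_N,\xi\ra)$, where $(h_1,\dots,h_N):=(f_1,\dots,f_m,g_1,\dots,g_n,f_{m+1},\dots,f_{m+k})$. Write $\mathcal A:=\{1,\dots,m\}\cup\{m+n+1,\dots,N\}$ for the set of indices carrying an ``$A$-function'' and $\mathcal B:=\{m+1,\dots,m+n\}$ for those carrying a ``$B$-function''; the point is that $\mathcal B$ is a single interval sitting strictly between the two intervals that make up $\mathcal A$.

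Applying \eqref{gyuf} to $h_1,\dots,h_N$, I would first note that a partition $\mathcal V\in\mathcal P^{(N)}_{\ge2}$ contributes a nonzero summand only if each of its blocks lies entirely in $\mathcal A$ or entirely in $\mathcal B$: the measure $\delta(dt_B)$ identifies all $t_i$ with $i\in B$, so a block meeting both $\mathcal A$ and $\mathcal B$ would force the integrand to contain a product $f(t)g(t)$ with $\operatorname{supp}f\subset A$, $\operatorname{supp}g\subset B$, $A\cap B=\varnothing$, hence to vanish. Thus every surviving $\mathcal V$ splits uniquely as $\mathcal V=\mathcal V_A\cup\mathcal V_B$, namely a partition $\mathcal V_A$ of $\mathcal A$ together with a partition $\mathcal V_B$ of $\mathcal B$, both into blocks of size $\ge2$.

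The key (and, I expect, the only nontrivial) step is the claim that no block of $\mathcal V_A$ crosses a block of $\mathcal V_B$ in the sense entering \eqref{cdtdtrsd}, i.e.\ there are no $B_1,B_2$ in different parts with $\min B_1<\min B_2<\max B_1<\max B_2$. This is a short case analysis using $\mathcal B=\{m+1,\dots,m+n\}$: if $B_1\in\mathcal V_A$ and $B_2\in\mathcal V_B$, then $\max B_1<\max B_2\le m+n$ forces $\max B_1\le m$ (as $\max B_1\in\mathcal A$), which contradicts $\min B_2<\max B_1$ since $\min B_2\ge m+1$; and if $B_1\in\mathcal V_B$ and $B_2\in\mathcal V_A$, then $m+1\le\min B_1<\min B_2<\max B_1\le m+n$ would put $\min B_2$ in $\mathcal B$, contradicting $\min B_2\in\mathcal A$. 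Hence, for every surviving $\mathcal V$, the crossing pairs entering \eqref{cdtdtrsd} split into those internal to $\mathcal V_A$ and those internal to $\mathcal V_B$, so $Q(\mathcal V;t_1,\dots,t_N)=Q_A\,Q_B$, where $Q_A$ is a product of factors $Q(t_u,t_v)$ with $u,v\in\mathcal A$ — in fact the analogue of \eqref{cdtdtrsd} for $\mathcal V_A$ — and $Q_B$ the corresponding product over $\mathcal V_B$.

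With these two facts the rest is bookkeeping. For each surviving $\mathcal V$ the integrand of \eqref{gyuf} factors into a function of $(t_i)_{i\in\mathcal A}$ times a function of $(t_i)_{i\in\mathcal B}$ (the test functions split as $\mathcal A$- versus $\mathcal B$-functions, the $Q$-factor splits as above, and every $\delta(dt_B)$ lives on one side since no block crosses the divide), so Fubini turns the $\mathcal V$-integral into the product of an integral over $T^{|\mathcal A|}$ indexed by $\mathcal V_A$ and one over $T^{|\mathcal B|}$ indexed by $\mathcal V_B$. Summing over $\mathcal V_B$ and $\mathcal V_A$ independently, and transporting the $\mathcal A$-sum along the unique increasing bijection $\{1,\dots,m+k\}\to\mathcal A$ (which preserves mins, maxs, crossings, and hence matches the $Q$-factors), I recognise by \eqref{gyuf} — now applied to $f_1,\dots,f_m,f_{m+1},\dots,f_{m+k}$ and to $g_1,\dots,g_n$ — that the two sums equal $\tau(\la f_1,\xi\ra\dotsm\la f_{m+k},\xi\ra)$ and $\tau(\la g_1,\xi\ra\dotsm\la g_n,\xi\ra)$, respectively. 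This is exactly \eqref{gdrtek}; the degenerate cases where $\mathcal A$ or $\mathcal B$ has one element are covered automatically, since then the relevant sum over partitions into blocks of size $\ge2$ is empty (and the corresponding $\tau$ vanishes by $c_1=0$).
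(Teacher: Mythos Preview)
Your argument is correct and complete. The combinatorial observation that no block of $\mathcal V_A$ can cross a block of $\mathcal V_B$ is exactly the pyramidal geometry, and once that is in place the factorisation of the $Q$-factor and of the integral follows as you describe.

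However, your route is genuinely different from the paper's. The paper does \emph{not} go through the moment formula \eqref{gyuf}; instead it argues directly in the Fock space $\mathcal F^Q(\mathcal G)$. It writes the left-hand side of \eqref{gdrtek} as an inner product, observes that both $\la f_{m+1},\xi\ra\dotsm\la f_{m+k},\xi\ra\Omega$ and $\la f_m,\xi\ra\dotsm\la f_1,\xi\ra\Omega$ lie in $\mathcal F^Q(L^2(A\times\mathbb R,\sigma\otimes\nu))$, and uses the operator identity $\la g_i,\xi\ra(G\cd F)=(\la g_i,\xi\ra G)\cd F$ for $F$ in the $A$-Fock space and $G$ in the $B$-Fock space (which follows from the explicit formulas for $a^\pm$, $a^0$ and the disjointness of supports). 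A final observation $(G\cd F_1,F_2)_{\mathcal F^Q(\mathcal G)}=(G,\Omega)(F_1,F_2)$ for $F_1,F_2$ in the $A$-part and $G$ in the $B$-part finishes the proof. Your approach has the virtue of being purely combinatorial and of making transparent, at the level of partitions and crossings, exactly why the pyramidal configuration is what allows factorisation; the paper's approach is more structural, is independent of the moment formula, and would generalise immediately to any field built from creation, neutral and annihilation operators for which the analogous $A$/$B$ Fock-space splitting holds.
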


\begin{proof} Write the left hand side of \eqref{gdrtek} as
\begin{equation}\label{hgfdytd} \big(\la g_1,\xi\ra \dotsm\la g_n,\xi\ra \la f_{m+1},\xi\ra\dotsm \la f_{m+k},\xi\ra\Omega,
\la f_m,\xi\ra\dotsm \la f_1,\xi\ra\Omega\big)_{\mathcal F^Q(\mathcal G)}.
\end{equation}
Observe that both $\la f_{m+1},\xi\ra\dotsm \la f_{m+k},\xi\ra\Omega$ and $\la f_m,\xi\ra\dotsm \la f_1,\xi\ra\Omega$ belong to the subspace $\mathcal F^Q (L^2(A\times\mathbb R,\sigma\otimes\nu))$ of $\mathcal F^Q(\mathcal G)$. Furthermore, it is easy to see that,
for each $g_i$ and any $F\in\mathcal F^Q(L^2(A\times\mathbb R,\sigma\otimes\nu))\cap\mathfrak D$ and $G\in\mathcal F^Q(L^2(B\times\mathbb R,\sigma\otimes\nu))\cap\mathfrak D$,
$$ \la g_i,\xi\ra(G\cd F)= (\la g_i,\xi\ra G)\cd F.$$
Therefore, the expression in \eqref{hgfdytd} is equal to
$$\big(\big(\la g_1,\xi\ra \dotsm\la g_n,\xi\ra\Omega\big)\cd \big(\la f_{m+1},\xi\ra\dotsm \la f_{m+k},\xi\ra\Omega\big),
\la f_m,\xi\ra\dotsm \la f_1,\xi\ra\Omega\big)_{\mathcal F^Q(\mathcal G)}.$$
But for any $F_1,F_2\in\mathcal F^Q(L^2(A\times\mathbb R,\sigma\otimes\nu))$ and $G\in\mathcal F^Q(L^2(B\times\mathbb R,\sigma\otimes\nu))$,
$$ (G\cd F_1,F_2)_{\mathcal F^Q(\mathcal G)}=(G,\Omega)_{\mathcal F^Q(\mathcal G)}  (F_1,F_2)_{\mathcal F^Q(\mathcal G)},$$
from where \eqref{gdrtek} follows.
\end{proof}

Analogously to Section~\ref{cfydry}, we may now introduce a noncommutative space $L^2(\tau)$. Furthermore, Proposition~\ref{drawjh} allows an extension to the L\'evy case.

\begin{proposition}\label{fhysdyts} \rom{(i)} The vacuum vector $\Omega$ is cyclic for the family of operators\linebreak $(\la f,\xi\ra)_{f\in B_0(T)}$.

\rom{(ii)} Recall that $\mathscr P$ denotes the
 unital algebra  generated by the operators $(\la f,\xi\ra)_{f\in B_0(T)}$\,, and let $\mathscr P_0$ be defined as before.
 Consider a linear mapping $I:\mathscr P\to\mathcal F^Q(\mathcal G)$ defined by $Ip:=p\Omega$ for $p\in\mathscr P$. Then $Ip$ does not depend on the choice of $p\in\mathscr P/\mathscr P_0$ and $I$ extends to a unitary operator $I:L^2(\tau)\to\mathcal F^Q(\mathcal G)$.
\end{proposition}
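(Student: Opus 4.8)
The plan is to deduce part~(ii) from part~(i) as a formal consequence of the GNS construction, and to prove part~(i) by a filtered induction modelled on the classical (boson) argument, the genuinely new ingredient being the density of polynomials in $L^2(\mathbb R,\nu)$ guaranteed by \eqref{cfdh}.

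Part~(ii) is short, and is verbatim the passage from Proposition~\ref{drawjh}(i) to Proposition~\ref{drawjh}(ii). Since the state is the vacuum vector state, $\tau(p)=(p\Omega,\Omega)_{\mathcal F^Q(\mathcal G)}$, and since $p_2^{*}$ is the operator adjoint of $p_2$ in $\mathcal F^Q(\mathcal G)$, one has for all $p_1,p_2\in\mathscr P$
$$(p_1,p_2)_{L^2(\tau)}=\tau(p_2^{*}p_1)=(p_2^{*}p_1\Omega,\Omega)_{\mathcal F^Q(\mathcal G)}=(p_1\Omega,p_2\Omega)_{\mathcal F^Q(\mathcal G)}.$$
Hence $\mathscr P_0=\{p\in\mathscr P\mid p\Omega=0\}$ is exactly the kernel of $I$, so $I$ is well defined on $\mathscr P/\mathscr P_0$, is isometric there, and extends to an isometry $I\colon L^2(\tau)\to\mathcal F^Q(\mathcal G)$. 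Its range contains $\{p\Omega\mid p\in\mathscr P\}$, which is dense by part~(i), so $I$ is onto and therefore unitary.

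For part~(i) I would equip $\mathcal F^Q(\mathcal G)$ with the increasing filtration by \emph{level}, in which a vector of the form $P_n\big(\varphi\cdot x_1^{k_1}\dotsm x_n^{k_n}\big)$ (where $\varphi$ depends only on $t_1,\dots,t_n$ and $k_j\in\mathbb Z_+$) has level $n+k_1+\dots+k_n$; let $\mathcal N_\ell$ be the span of all such vectors of level $\le\ell$ together with the scalars, so $\mathbb C\Omega=\mathcal N_0\subset\mathcal N_1\subset\dotsm$. Crucially $\bigcup_\ell\mathcal N_\ell$ is dense in $\mathcal F^Q(\mathcal G)$: $B_0(T)$ is dense in $\h$, and by \eqref{cfdh} (equivalently \eqref{dstrst}) the monomials $(\mu_k)_{k\ge0}$ of \eqref{hcdydyd} span a dense subspace of $L^2(\mathbb R,\nu)$, so the simple tensors $g\otimes\mu_k$ are total in $\mathcal G$ and their $Q$-symmetric products are total in each $n$-particle subspace of $\mathcal F^Q(\mathcal G)$. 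Using the explicit formulas of Section~\ref{utyr65e} --- formula \eqref{dstyufut} for the creation part $a^+(g\otimes\mu_0)$, formula \eqref{dd} for the neutral part $a^0(g\otimes\mu_1)$ (the one that makes the $x$-degree grow), and Proposition~\ref{ctst} (or Proposition~\ref{kds5u6r}) for the annihilation part $a^-(g\otimes\mu_0)$ --- one checks that $\la g,\xi\ra$ maps $\mathcal N_\ell$ into $\mathcal N_{\ell+1}$, the creation part raising the particle number by one, the neutral part raising the total $x$-degree by one, and the annihilation part strictly decreasing the level. The target is then to prove, by induction on $\ell$, that $\mathscr P^{(\ell)}\Omega=\mathcal N_\ell$, where $\mathscr P^{(\ell)}$ is the span of the scalars and the products of at most $\ell$ of the operators $\la f,\xi\ra$; granting this, $\mathscr P\Omega=\bigcup_\ell\mathcal N_\ell$ is dense and (i) follows. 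The inclusion $\mathscr P^{(\ell)}\Omega\subset\mathcal N_\ell$ is immediate by repeatedly applying those three formulas to $\Omega$.

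The main obstacle is the reverse inclusion $\mathcal N_\ell\subset\mathscr P^{(\ell)}\Omega$. Given a level-$\ell$ generator $v$ of particle number $n$, one wants to realize $v$, modulo vectors of level $<\ell$ (which belong to $\mathscr P^{(\ell-1)}\Omega$ by the inductive hypothesis), as $\la g,\xi\ra w$ for a suitable $w\in\mathcal N_{\ell-1}$. The difficulty is that the level-$\ell$ part of $\la g,\xi\ra w$ splits into two pieces of the \emph{same} level but of \emph{different} particle numbers --- the creation contribution and the neutral contribution --- and these must be disentangled before one can isolate the piece with particle number $n$. I expect this to be done exactly as in the classical Lévy case: by combining several such identities (for varying $g$, and with $w$ produced by one- or two-fold applications of the operators) and running a secondary induction that peels off the contributions particle-number by particle-number, the determinacy of the relevant one-dimensional moment problems --- itself a consequence of the exponential-moment bound \eqref{dstrst} --- being what makes the combinatorics close up. Everything else is bookkeeping parallel to the $Q\equiv1$ case. (One may alternatively pass first, via the isomorphism $U$ of Remark~\ref{klgyu}, to the form \eqref{kif8ur8}, which separates the Gaussian and jump parts and slightly simplifies the bookkeeping.)
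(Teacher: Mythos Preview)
Your framework is exactly the paper's: deduce (ii) from (i) via GNS, and prove (i) by induction on the ``level'' $n+k_1+\dots+k_n$, using the density of polynomials in $L^2(\mathbb R,\nu)$ to see that $\bigcup_\ell\mathcal N_\ell$ is dense. You have also correctly isolated the only real difficulty: when you apply $\la g,\xi\ra$ to a level-$(\ell-1)$ vector, the creation and neutral pieces both land at level~$\ell$ but at different particle numbers, and one must separate them.

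What you are missing is the concrete device the paper uses to do that separation, and your guess about what fills the gap is off target. The paper exploits the non-atomicity of $\sigma$ to reduce to vectors $(\chi_{\Delta_1}\otimes\mu_{l_1})\cd\dotsm\cd(\chi_{\Delta_i}\otimes\mu_{l_i})$ with \emph{mutually disjoint} $\Delta_1,\dots,\Delta_i$. Disjointness kills the cross terms outright: if $l_1=0$ one applies $\la\chi_{\Delta_1},\xi\ra$ to $(\chi_{\Delta_2}\otimes\mu_{l_2})\cd\dotsm\cd(\chi_{\Delta_i}\otimes\mu_{l_i})$, and because $\chi_{\Delta_1}\chi_{\Delta_j}=0$ for $j\ge2$ the neutral and annihilation parts vanish (by \eqref{dd} and Proposition~\ref{ctst}), leaving exactly the desired vector. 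If $l_1\ge1$ one applies $\la\chi_{\Delta_1},\xi\ra$ to $(\chi_{\Delta_1}\otimes\mu_{l_1-1})\cd(\chi_{\Delta_2}\otimes\mu_{l_2})\cd\dotsm$; now three terms survive, the neutral one is the target, the annihilation one is of strictly lower level, and the creation one is $(\chi_{\Delta_1}\otimes\mu_0)\cd(\chi_{\Delta_1}\otimes\mu_{l_1-1})\cd\dotsm$, which has a zero exponent in front and therefore falls under the first case (after one more appeal to non-atomicity to approximate the repeated $\chi_{\Delta_1}$ by disjoint indicators). That is the whole mechanism. Your suggestion that ``determinacy of the one-dimensional moment problems \dots\ makes the combinatorics close up'' is a red herring: moment determinacy (via \eqref{dstrst}) is used once, to get density of polynomials in $L^2(\mathbb R,\nu)$ and hence density of $\bigcup_\ell\mathcal N_\ell$; it plays no role whatsoever in the inductive step, which is purely algebraic once one has the disjoint-supports reduction. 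Also, you do not need the exact equality $\mathscr P^{(\ell)}\Omega=\mathcal N_\ell$; the paper works throughout in the closure $\mathscr U:=\overline{\mathscr P\Omega}$, and the density step (disjoint indicators $\Rightarrow$ general $f^{(i)}$) genuinely uses that $\mathscr U$ is closed.
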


\begin{proof} Clearly, we only need to prove part i).
Denote by $\mathscr U$ the closure of the set $\mathscr P\Omega$ in  $\mathcal F^Q(\mathcal G)$.
To prove the proposition, it suffices to show  that $\mathscr U=\mathcal F^Q(\mathcal G)$.
In view of  assumption \eqref{dstrst}, the set of functions $$\{f(t)x^k  \mid f\in B_0(T),\ k\in\mathbb Z_+\}$$ is total in $\mathcal G$
(i.e., its closed linear span coincides with  $\mathcal G$). Therefore, the set
\begin{equation}\label{ftufytufd}\big\{\Omega,\, P_i\big[f^{(i)}(t_1,\dots,t_i)x_1^{l_1}\dotsm x_i^{l_i}\big]\mid
f^{(i)}\in B_0(T^i\mapsto\mathbb C),\, (l_1,\dots,l_i)\in\mathbb Z_+^i,\, i\in\mathbb N
\big\}\end{equation}
is total in $\mathcal F^Q(\mathcal G)$. Hence, it suffices to show that, for any multi-index $ (l_1,\dots,l_i)\in\mathbb Z_+^i$ with $i\in\mathbb N$,
\begin{equation}\label{hfyut}
\big\{ P_i\big[f^{(i)}(t_1,\dots,t_i)x_1^{l_1}\dotsm x_i^{l_i}\big]\mid
f^{(i)}\in B_0(T^i\mapsto\mathbb C)\big\}\subset \mathscr U.
\end{equation}
 We will prove \eqref{hfyut} by induction on $l_1+\dots+l_i+i$. The statement trivially holds when this number is 1. Let us assume that the statement holds for $1,2,\dots,n$, and let us prove it for $n+1$. So, we fix any multi-index $(l_1,\dots,l_i)$ such that $l_1+\dots+l_i+i=n+1$. Since the measure $\sigma$ is non-atomic, it suffices to show that, for any mutually disjoint sets $\Delta_1,\dots, \Delta_i\in \mathcal B_0(T)$, we have the inclusion
$$P_i\big[\chi_{\Delta_1}(t_1)\dotsm\chi_{\Delta_i}(t_i)x_1^{l_1}\dotsm x_i^{l_i}\big]=\big((\chi_{\Delta_1}\otimes \mu_{l_1})
\cd\dotsm\cd (\chi_{\Delta_i}\otimes\mu_{l_i})\big)(t_1,x_1,\dots,t_i,x_i)\in\mathscr U.
$$
(Recall notation \eqref{hcdydyd}.) We have to distinguish two cases.

Case 1: $l_1=0$. Then, by Proposition~\ref{ctst} and formula \eqref{dd},
$$ (\chi_{\Delta_1}\otimes \mu_0)
\cd (\chi_{\Delta_2}\otimes \mu_{l_2})\cd\dotsm\cd (\chi_{\Delta_i}\otimes\mu_{l_i})=\la \chi_{\Delta_1},\xi\ra \big((\chi_{\Delta_2}\otimes \mu_{l_2})\cd\dotsm\cd (\chi_{\Delta_i}\otimes\mu_{l_i})\big), $$
and the statement follows by the assumption of induction.

Case 2: $l_1\ge1$. Then, again using Proposition~\ref{ctst} and formula \eqref{dd},
\begin{align*}
&(\chi_{\Delta_1}\otimes \mu_{l_1})
\cd\dotsm\cd (\chi_{\Delta_i}\otimes\mu_{l_i})=\la \chi_{\Delta_1},\xi\ra \big((\chi_{\Delta_1}\otimes \mu_{l_1-1})
\cd\dotsm\cd (\chi_{\Delta_i}\otimes\mu_{l_i})\big)\\
&\quad- (\chi_{\Delta_1}\otimes 1)\cd (\chi_{\Delta_1}\otimes \mu_{l_1-1})
\cd\dotsm\cd (\chi_{\Delta_i}\otimes\mu_{l_i})\\
&\quad-\sigma(\Delta_1)\int_{\mathbb R}x^{l_1-1}\,\nu(dx)(\chi_{\Delta_2}\otimes \mu_{l_2})\cd\dotsm\cd (\chi_{\Delta_i}\otimes\mu_{l_i}),
\end{align*}
and the statement again follows by the assumption of induction.
\end{proof}

\section{Nualart--Schoutens-type chaotic decomposition for\newline $Q$-L\'evy processes}\label{kifr78ufsrs}

Our aim now is to derive a counterpart of the Nualart--Schoutens chaotic decomposition \cite{NS} for $Q$-L\'evy processes. By taking `powers of the jumps', we obtain the sequence of power jump processes
\begin{equation}\label{ftyyded} X_k(f):=a^+(f\otimes\mu_{k-1})+a^0(f\otimes\mu_k)+a^-(f\otimes\mu_{k-1}),\quad f\in B_0(T),\ k\in\mathbb N.\end{equation}
In particular, $X_{1}(f)=\la f,\xi\ra$. (All these operators map the domain $\mathfrak D$ into itself.)

\begin{remark} Note that under the unitary isomorphism $U$ defined by \eqref{hjufdtu}--\eqref{vgcdydy}, the operator $X_k(f)$ with $k\ge2$, goes over into the operator
\begin{equation}\label{ghcydyr}a^+(f\otimes\mu_{k})+a^0(f\otimes\mu_k)+a^-(f\otimes\mu_{k}),\end{equation}
 compare with formula \eqref{kif8ur8} which gives the image of $\la f,\xi\ra=X_1(f)$. In formula \eqref{ghcydyr},   $\mu_k(x)=x^k$
 can be interpreted as the $k$-th power of the `jump' $x$.
\end{remark}

For a fixed $f\in B_0(T)$, we now orthogonalize the noncommutative random variables $\big(X^{(k)}(f)\big)_{k=1}^\infty$ in $L^2(\tau)$. Noting that
\begin{equation}\label{uyrfr}(X_{k}(f)\Omega)(t,x)=f(t)x^{k-1},\quad k\in\mathbb N,\end{equation}
this is equivalent to the procedure of orthogonalization of the monomials $(x^k)_{k=0}^\infty$ in $L^2(\mathbb R,\nu)$.

Let $(p^{(k)})_{k=0}^\infty$ denote the system of monic orthogonal polynomials in $L^2(\mathbb R,\nu)$. (If the support of $\nu$ is finite and consists of $N$ points, we set $p^{(k)}:=0$ for $k\ge N$.) By Favard's theorem (see e.g.\ 
\cite[Ch.~I, Sec.~4]{Chihara}), we have the recursive formula
\begin{equation}\label{hdtrss}xp^{(k)}(x)=p^{(k+1)}(x)+b_kp^{(k)}(x)+a_kp^{(k-1)}(x),\quad k\in\mathbb Z_+,\end{equation}
with $p^{(-1)}(x):=0$, $a_k>0$, and $b_k\in\mathbb R$. (If the support of $\nu$ has $N$ points, $a_k=0$ for $k\ge N$.)
Thus, by virtue of \eqref{ftyyded}--\eqref{hdtrss}, the orthogonalized power jumps processes are
$$ Y_k(f):=a^+(f\otimes p^{(k)})+a^0\big(f\otimes(p^{(k+1)}+b_{k}p^{(k)}+a_{k}p^{(k-1)})\big)+a^-(f\otimes p^{(k)}),$$
where $f\in B_0(T)$ and  $k\in\mathbb Z_+$.
(It is convenient for us to start the numeration of the $Y$-processes from 0, rather than from 1.) For $\Delta\in\mathcal B_0(T)$, we will also denote $Y_k(\Delta):=Y_k(\chi_\Delta)$.

For each multi-index $(k_1,\dots,k_n)\in\mathbb Z_+^n$ and each function $f^{(n)}\in\hc^{\otimes n}$, we can now construct a noncommutative multiple stochastic integral
\begin{equation}\label{vftdtu}\int_{T^n}f^{(n)}(t_1,\dots,t_n)\,Y_{k_1}(dt_1)\dotsm Y_{k_n}(dt_n)\in L^2(\tau)\end{equation} as follows.
We first choose arbitrary $\Delta_1,\dots,\Delta_n\in \mathcal B_0(T)$, mutually disjoint, and define
$$\int_{T^n}\chi_{\Delta_1}(t_1)\dotsm\chi_{\Delta_n}(t_n)\,Y_{k_1}(dt_1)\dotsm Y_{k_n}(dt_n):=Y_{k_1}(\Delta_1)\dotsm Y_{k_n}(\Delta_n).$$
Since  $\Delta_1,\dots,\Delta_n$ are mutually disjoint, we have
$$Y_{k_1}(\Delta_1)\dotsm Y_{k_n}(\Delta_n)\Omega=(\chi_{\Delta_1}\otimes p^{(k_1)})\cd\dotsm\cd (\chi_{\Delta_n}\otimes p^{(k_n)}).$$
Since the measure $\sigma$ is non-atomic, the functions $\chi_{\Delta_1}\otimes\dots\otimes \chi_{\Delta_n}$ with $\Delta_1,\dots,\Delta_n$ as above form a total set in $\hc^{\otimes n}$. Thus, by linearity and continuity the definition of a multiple stochastic integral is extendable to the whole of  $\hc^{\otimes n}$. Thus, under the unitary isomorphism $I:L^2(\tau)\to\mathcal F^Q(\mathcal G)$ from Proposition~\ref{fhysdyts}, the image of the multiple stochastic integral in \eqref{vftdtu}
is $P_n\big[f^{(n)}(t_1,\dots,t_n)p^{(k_1)}(x_1)\dotsm p^{(k_n)}(x_n)\big]$. Denote by $\mathcal F_{(k_1,\dots,k_n)}$ the subspace of $\mathcal F^Q(\mathcal G)$ consisting of all such elements. (In fact, $\mathcal F_{(k_1,\dots,k_n)}$ is a subspace of $\mathcal G_{\mathbb C}^{\cd n}$.) In view of the $Q$-symmetry, for each permutation $\pi\in S_n$, the spaces $\mathcal F_{(k_1,\dots,k_n)}$ and $\mathcal F_{(k_{\pi(1)},\dots,k_{\pi(n)})}$ coincide. Thus we can always assume that $k_1\le k_2\le\dots\le k_n$.
In view of this, we will use the following notation. Denote by $\mathbb Z_{+,\,\mathrm{fin}}^\infty$ the set of all infinite sequences $\alpha=(\alpha_0,\alpha_1,\alpha_2,\dots)\in\mathbb Z_+^\infty$ such that only a finite number of $\alpha_j$'s are not equal to zero. Let $|\alpha|:=\alpha_0+\alpha_1+\alpha_2+\dotsm$. For each $\alpha\in \mathbb Z_{+,\,\mathrm{fin}}^\infty$, we denote by $\mathcal F_\alpha$ the space $\mathcal F_{(k_1,\dots,k_n)}$ with  $n=|\alpha|$ and
$k_1=\dots=k_{\alpha_0}=0$, $k_{\alpha_0+1}=\dots =k_{\alpha_0+\alpha_1}=1$, $k_{\alpha_0+\alpha_1+1}=\dots=k_{\alpha_0+\alpha_1+\alpha_2}=2$, and so on. (In the case where  $\alpha=(0,0,\dots)$,  $\mathcal F_\alpha$ will mean the vacuum space.)

Using the orthogonality of the polynomials $(p^{(k)})_{k=0}^\infty$ in $L^2(\mathbb R,\nu)$, we easily conclude from Proposition~\ref{cftrst} that, for different multi-indices $\alpha,\beta\in \mathbb Z_{+,\,\mathrm{fin}}^\infty$, the spaces $\mathcal F_\alpha$ and $\mathcal F_\beta$ are orthogonal in $\mathcal F^Q(\mathcal G)$. Since the polynomials are dense in $L^2(\mathbb R,\nu)$, we therefore conclude that $\mathcal F^Q(\mathcal G)=\bigoplus_{\alpha\in \mathbb Z_{+,\,\mathrm{fin}}^\infty}\mathcal F_\alpha$.

We next note that, for $\alpha\in \mathbb Z_{+,\,\mathrm{fin}}^\infty$, a general element of $\mathcal F_\alpha$ has the form
 \begin{equation}\label{cgtrdtr}P_{|\alpha|}\big[f^{(|\alpha|)}(t_1,\dots,t_{|\alpha|})p^{(0)}(x_1)\dotsm p^{(0)}(x_{\alpha_0})p^{(1)}(x_{\alpha_0+1})\dotsm p^{(1)}(x_{\alpha_0+\alpha_1})
\dotsm\big],\end{equation}
with $f^{(|\alpha|)}\in\hc^{\otimes |\alpha|}$.
Using Proposition~\ref{jvfytdy}, we have the following identity for the $Q$-symmetrization operators:
$$ P_{|\alpha|}=P_{|\alpha|}(P_{\alpha_0}\otimes P_{\alpha_1}\otimes P_{\alpha_2}\otimes\dotsm),$$
where we set $P_0:=\mathbf 1$. Therefore, without loss of generality, we may assume that a general element of $\mathcal F_\alpha$ is given by the formula  \eqref{cgtrdtr} in which
$
f^{(|\alpha|)}\in \hc^{\cd\alpha_0}\otimes \hc^{\cd \alpha_1}\otimes\hc^{\cd\alpha_2}\otimes\dotsm$.

For each $\alpha\in \mathbb Z_{+,\,\mathrm{fin}}^\infty$, we now define a complex Hilbert space
\begin{equation}\label{ggytdyd}\mathbb F_\alpha:= \hc^{\cd\alpha_0}\otimes \hc^{\cd \alpha_1}\otimes\hc^{\cd\alpha_2}\otimes\dotsm\,\left(\prod_{i\ge0}\alpha_i!\, C_i
^{\alpha_i}\right).\end{equation}
Here, for $i\ge0$, $C_i:=\int_{\mathbb R}|p^{(i)}(x)|^2\,\nu(dx)$. Recall that, for a Hilbert space $\mathscr H$ and a constant $C>0$, we denote by $\mathscr H C$ the Hilbert space which coincides with $\mathscr H$ as a set and which satisfies  $\|\cdot\|^2_{\mathscr H C}:=\|\cdot\|_{\mathscr H}^2C$.

Using again the orthogonality of the polynomials $(p^{(k)})_{k=0}^\infty$ in $L^2(\mathbb R,\nu)$ and Proposition~\ref{cftrst},
we see that, for each $f^{(|\alpha|)}\in\mathbb F_\alpha$,  the square of the $\mathcal F^Q(\mathcal G)$-norm of the
expression in \eqref{cgtrdtr} is equal to
$ \|f^{(|\alpha|)}\|^2_{\mathbb F_\alpha}$. Thus, we have proven the following

\begin{theorem} For each  $Q$-L\'evy process constructed in Section~\rom{\ref{kigf7ur7}}, the following unitary operator gives an orthogonal expansion of $L^2(\tau)$ in noncommutative multiple stochastic integrals:
\begin{multline*}\bigoplus_{\alpha\in \mathbb Z_{+,\,\mathrm{fin}}^\infty}\mathbb F_\alpha\ni (f_\alpha)_{\alpha\in\mathbb Z_{+,\,\mathrm{fin}}^\infty}\mapsto \sum_{\alpha\in \mathbb Z_{+,\,\mathrm{fin}}^\infty}\int_{T^{|\alpha|}}f_\alpha(t_1,\dots,t_{|\alpha|}) Y_0(dt_1)\dotsm Y_0(dt_{\alpha_0})\\
\times
Y_1(dt_{\alpha_0+1})\dotsm Y_1(dt_{\alpha_0+\alpha_1})\dotsm\in L^2(\tau),
 \end{multline*}
where the spaces $\mathbb F_\alpha$ are given by \eqref{ggytdyd}.
\end{theorem}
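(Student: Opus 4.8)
The plan is to assemble the structural facts collected in the discussion above into a single unitary map; the only computational ingredient is the norm identity for the spaces $\mathbb F_\alpha$, and everything else is bookkeeping.

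First I would fix $\alpha\in\mathbb Z_{+,\,\mathrm{fin}}^\infty$, set $n:=|\alpha|$, and establish that the assignment
\[
\mathbb F_\alpha\ni f_\alpha\longmapsto \int_{T^{n}}f_\alpha(t_1,\dots,t_{n})\,Y_0(dt_1)\dotsm Y_0(dt_{\alpha_0})\,Y_1(dt_{\alpha_0+1})\dotsm Y_1(dt_{\alpha_0+\alpha_1})\dotsm\in L^2(\tau)
\]
is a well-defined linear isometry of $\mathbb F_\alpha$ onto a closed subspace of $L^2(\tau)$. The multiple stochastic integral has already been defined on all of $\hc^{\otimes n}$ by linearity and continuity, and under the unitary $I\colon L^2(\tau)\to\mathcal F^Q(\mathcal G)$ of Proposition~\ref{fhysdyts} it becomes the vector displayed in \eqref{cgtrdtr}. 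The identity $P_n=P_n(P_{\alpha_0}\otimes P_{\alpha_1}\otimes\dotsm)$, obtained by iterating \eqref{gfy7rdy} of Proposition~\ref{jvfytdy}, shows that one may restrict $f_\alpha$ to the sub-tensor product $\hc^{\cd\alpha_0}\otimes\hc^{\cd\alpha_1}\otimes\dotsm$ without losing any element of $\mathcal F_\alpha$; hence the assignment is a linear bijection of $\mathbb F_\alpha$ (as a set) onto $\mathcal F_\alpha$, and it remains only to check that it preserves norms.

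Next I would invoke the two facts already recorded before the statement: by Proposition~\ref{cftrst} together with the orthogonality of the monic polynomials $(p^{(k)})_{k\ge0}$ in $L^2(\mathbb R,\nu)$, the spaces $\mathcal F_\alpha$ are pairwise orthogonal in $\mathcal F^Q(\mathcal G)$; and, since the polynomials are dense in $L^2(\mathbb R,\nu)$, one has $\mathcal F^Q(\mathcal G)=\bigoplus_{\alpha}\mathcal F_\alpha$. Granting the isometry property of the previous step, the orthogonal direct sum of those maps, composed with $I^{-1}$, is then a unitary operator from $\bigoplus_\alpha\mathbb F_\alpha$ onto $L^2(\tau)$ (in particular the series $\sum_\alpha\int f_\alpha\,Y_0(dt_1)\dotsm$ converges in $L^2(\tau)$ whenever $\sum_\alpha\|f_\alpha\|_{\mathbb F_\alpha}^2<\infty$), and by construction it acts by the displayed formula. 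This would finish the proof modulo the norm identity.

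That norm identity — accounting for the constants $\alpha_i!\,C_i^{\alpha_i}$ — is the one genuinely non-routine point, and I would argue as follows. Starting from $\|F\|^2_{\mathcal F^Q(\mathcal G)}=n!\,\|F\|^2_{\mathcal G_{\mathbb C}^{\otimes n}}$ for $F\in\mathcal G_{\mathbb C}^{\cd n}$, I write the squared norm of \eqref{cgtrdtr} as $n!\,(P_n g,g)_{\mathcal G_{\mathbb C}^{\otimes n}}$ with $g((t_1,x_1),\dots,(t_n,x_n)):=f_\alpha(t_1,\dots,t_n)\,p^{(k_1)}(x_1)\dotsm p^{(k_n)}(x_n)$, and expand $P_n=\tfrac1{n!}\sum_{\pi\in S_n}\Psi_\pi$ using the $Q$-symmetrization formula \eqref{jhcdthr}. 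Since $Q$ depends only on the $T$-coordinates, each summand $(\Psi_\pi g,g)$ factors as a $T$-integral times $\prod_{j}\int_{\mathbb R}p^{(k_{\pi(j)})}(x)\overline{p^{(k_j)}(x)}\,\nu(dx)$; the latter vanishes unless $\pi$ maps each level set $\{\,j:k_j=i\,\}$ of the multi-index onto itself, i.e.\ unless $\pi\in S_{\alpha_0}\times S_{\alpha_1}\times\dotsm$, in which case it equals $\prod_{i\ge0}C_i^{\alpha_i}$. For such $\pi$ the $Q$-symmetry of $f_\alpha$ inside each factor gives $\Psi_\pi f_\alpha=f_\alpha$, so the $T$-integral equals $\|f_\alpha\|^2_{\hc^{\otimes n}}$; with the $\tfrac1{n!}$ in $P_n$ cancelling the prefactor $n!$, summing over the $\prod_i\alpha_i!$ surviving permutations produces exactly $\bigl(\prod_{i\ge0}\alpha_i!\,C_i^{\alpha_i}\bigr)\|f_\alpha\|^2_{\hc^{\otimes n}}=\|f_\alpha\|^2_{\mathbb F_\alpha}$. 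I expect the careful bookkeeping of these symmetric subgroups and the matching of the powers $C_i^{\alpha_i}$ to be the main, though entirely elementary, obstacle.
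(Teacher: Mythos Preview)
Your proposal is correct and follows essentially the same route as the paper: the paper assembles exactly the same ingredients (the image under $I$ of the multiple integral, the identity $P_{|\alpha|}=P_{|\alpha|}(P_{\alpha_0}\otimes P_{\alpha_1}\otimes\dotsm)$, orthogonality and density of the $p^{(k)}$ to get $\mathcal F^Q(\mathcal G)=\bigoplus_\alpha\mathcal F_\alpha$, and the norm identity via Proposition~\ref{cftrst}) in the discussion preceding the theorem and then states the result. Your explicit expansion of $n!\,(P_n g,g)$ over $S_n$ and reduction to $\pi\in S_{\alpha_0}\times S_{\alpha_1}\times\dotsm$ is precisely the computation the paper leaves implicit in the phrase ``using again the orthogonality of the polynomials \dots\ and Proposition~\ref{cftrst}''.
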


\begin{center}
{\bf Acknowledgements}\end{center} We would  like to thank the referee for a careful reading of the
manuscript and making very useful comments and suggestions. 
 The research was partially supported by the International Joint Project grant 2008/R2 of the Royal Society.
 MB and EL  acknowledge the financial support of the SFB~701 ``Spectral structures and topological methods in mathematics'', Bielefeld University. MB and JW were partially supported by the
 the Polish MNiSW grant NN201 364436.

\end{document}